\renewcommand{\thefootnote}{\fnsymbol{footnote}}
\newtheorem{theorem}{Theorem}[section]
\newtheorem{lemma}[theorem]{Lemma}
\newtheorem{proposition}[theorem]{Proposition}
\newtheorem{remark}[theorem]{Remark}
\newtheorem{example}[theorem]{Example}
\newtheorem*{example*}{Example}
\newtheorem*{remark*}{Remark}
\newtheorem{corollary}[theorem]{Corollary}
\newtheorem*{corollary*}{Corollary}
\newtheorem{definition}[theorem]{Definition}
\newtheorem*{definition*}{Definition}
\newtheorem*{notation*}{Notation}
\numberwithin{equation}{section}
\gdef\myletter{}
\let\savetheequation\theequation
\def\theequation{\savetheequation\myletter}
\def\supp{\mathrm{supp}}
\newcommand{\CC}{{\mathbb C}}
\newcommand{\QQ}{{\mathbb Q}}
\newcommand{\RR}{{\mathbb R}}
\newcommand{\ZZ}{{\mathbb Z}}
\newcommand{\PP}{{\mathbb P}}
\newcommand{\NN}{{\mathbb N}}
\newcommand{\calN}{\mathcal{N}}
\newcommand{\lt}{\textsc{lt}}
\def \bar{\overline}
\def \hat{\widehat}
\def \b0{{\bf 0}}
\def\bV{{\bf V}}
\def\bI{{\bf I}}
\def\calL{\mathcal{L}}
\long\def\symbolfootnote[#1]#2{\begingroup%
\def\thefootnote{\fnsymbol{footnote}}\footnote[#1]{#2}\endgroup}
\newcommand{\ldeg}{\deg_{\calL}}
\begin{document} 

\title{Lelong classes of plurisubharmonic functions on an affine variety}
\author{Jesse Hart and Sione Ma`u}

\maketitle


\begin{abstract}
We study the Lelong classes $\calL(V),\calL^+(V)$ of psh functions on an affine variety $V$.  We compute the Monge-Amp\`ere mass of these functions, which we use to define the degree of a polynomial on $V$ in terms of  pluripotential theory (the \emph{Lelong degree}).  We compute the Lelong degree explicitly in a specific example.  Finally, we derive an affine version of B\'ezout's theorem.
\end{abstract}

\section{Introduction}

Although they are algebraic objects, complex polynomials can also be studied as entire holomorphic functions that satisfy certain growth restrictions.  The tools of complex analysis (such as the Cauchy integral formula) can be used to study their deeper properties.  An example of this in one variable is the standard complex analytic proof of the fundamental theorem of algebra.  In turn, the analysis of holomorphic functions involves the use of plurisubharmonic (psh) functions.  In one dimension, these are the classical subharmonic functions of potential theory in the plane; in higher dimensions psh functions satisfy pluripotential theory,  a nonlinear generalization based on the complex Monge-Amp\`ere operator.  The prototypical example of a psh function is $\log|F|$, where $F$ is a holomorphic mapping.     

Back in one complex variable, $z=x+iy$, a polynomial
$$
p(z) = a_dz^d + a_{d-1}z^{d-1} + \cdots + a_1z + a_0
$$
is classified by its degree $\deg(p):=d$.  This gives the order of growth of $p$ at infinity, as well as its  number of zeros, counting multiplicity, by the fundamental theorem of algebra.  The degree can also be given in terms of the subharmonic function $u:=\log|p|$ in a couple of ways:
\begin{eqnarray}
\deg(p) &=& \sup \Bigl\{c>0\colon \lim_{|z|\to\infty} \tfrac{1}{c}u(z)-\log|z|=O(1), \Bigr\} \label{eqn:i1} \\
2\pi\deg(p) &=& \int dd^cu.  \label{eqn:i2}
\end{eqnarray} 

  For a function $\varphi$ of class $C^2$, $dd^c\varphi = (\Delta \varphi) dx\wedge dy$ where $\Delta=\partial^2/\partial x^2 + \partial^2/\partial y^2$ is the Laplacian, and the operator $dd^c$ extends to subharmonic functions as a positive measure.  Equation (\ref{eqn:i1}) simply reformulates the notion of growth, while (\ref{eqn:i2}) follows from the fundamental theorem of algebra, writing $p(z)=c\prod_i (z-a_i)$ and using the fact that $dd^c\log|z-a_i|=2\pi\delta_{a_i}$, where $\delta_a$ denotes the discrete probability measure on $\{a\}$. 

The situation is more complicated in several variables.  Instead of a single polynomial, one considers a system of polynomial equations $p_1(z)=\cdots=p_k(z)=0$, $z\in\CC^N$.  An underdetermined system has infinitely many solutions that form an algebraic set  $V=\bV(p_1,\ldots,p_k)$.  Consider adding another polynomial $p_{k+1}$; then finding a solution to  $p_1=\cdots=p_k=p_{k+1}=0$ is equivalent to finding a solution to 
$$p_{k+1}(z) = 0, \quad z\in V.$$ 
Polynomials on $V$ can be studied algebraically as elements of a ring $\CC[V]$, or as holomorphic functions on $V$, or more generally, in terms of pluripotential theoretic objects: psh functions and closed positive currents.  
Pluripotential theory on holomorphic varieties in $\CC^N$ was studied by Sadullaev \cite{sadullaev:estimate} and Zeriahi (\cite{zeriahi:criterion}, \cite{zeriahi:pluripotential}).  
 We should also mention that a general pluripotential theory on complex manifolds has been developed (by Demailly, Guedj/Zeriahi and others) that has numerous applications in complex and algebraic geometry; see   \cite{demailly:applications} for a survey.

In this paper, we study the Lelong classes $\calL(V),\calL^+(V)$ of psh functions on an affine variety\footnote{i.e., an irreducible algebraic subset of $\CC^N$.} $V\subset\CC^N$.  These are functions of (at most) logarithmic growth as $|z|\to\infty$ on $V$.  We obtain the formula
\begin{equation}\label{eqn:mass1}
\int_V (dd^cu)^{m} = d(2\pi)^m  \quad \hbox{for all } u\in\calL^+(V),
\end{equation}
where  $m=\dim(V)$, $d=\deg(V)$, and $(dd^c)^m$ is the $m$-th exterior power of $dd^c$, or \emph{complex Monge-Amp\`ere operator}.  Here, we use the complex analytic definition of degree and dimension (as in e.g. \cite{gunning:introduction} or \cite{shabat:introduction}) in terms of a  branched covering projection.  

In Section \ref{sec:noether} we construct the projection explicitly for an affine variety; this is a standard construction in commutative algebra, and provides good coordinates for computation (a \emph{Noether presentation}).   
In Section \ref{sec:lelong} we introduce the Lelong classes $\calL(V),\calL^+(V)$ of psh functions, and in Section \ref{sec:comparison}, we derive  formula (\ref{eqn:mass1}).  To carry out our computations, we adapt  some standard convergence and comparison theorems for the complex Monge-Amp\`ere operator in $\CC^N$.

In Section \ref{sec:degree} we introduce the Lelong degree of a polynomial $p\in\CC[V]$ as a generalization of (\ref{eqn:i2}).  It can also be interpreted as a Lelong number for the current $dd^cp$.  We use the quantity  $d(2\pi)^m$ derived in the previous section to normalize the degree. 
 In Section \ref{sec:curves} we compute the Lelong degree explicitly on an algebraic curve $V=\bV(P)\subset\CC^2$.  Although we only carry out the computation for the specific polynomial $q(x,y)=y$ in this paper, we hope to expand this to a method for computing $\deg_{\calL(V)}(q)$ for any $q\in\CC[V]$, using a Newton polygon associated to $V$.  Our computation shows that $\deg_{\calL(V)}(q)$ is a rational number that gives the average growth of $q$ along the branches of $V$ as $|z|\to\infty$.  The next step would be to generalize such a method to higher-dimensional varieties.

In the last section, we prove an affine version of B\`ezout's theorem.  We relate it to the classical B\`ezout theorem in projective space via an example in $\CC^2$.

\subsection{Notation}

We recall some standard notation in computational algebraic geometry and several complex variables. 

 Write $\langle f_1,\ldots,f_n\rangle$ for the ideal generated by elements $f_j$ of a polynomial ring $\CC[z]=\CC[z_1,\ldots,z_N]$, and for $S\subset\CC[z]$, write $\langle S\rangle$ for the ideal generated by $S$.  Also, define the algebraic sets 
\begin{eqnarray*}
\bV(f_1,\ldots,f_j) &:=& \{z\in\CC^N: f_1(z)=\cdots=f_j(z)=0\}, \\
 \bV(S)&:=& \{z\in\CC^N: f(z)=0 \hbox{ for all }f\in S\}.
\end{eqnarray*}
For $V\subset\CC^N$, define the ideal
$$
\bI(V):= \{f\in\CC[z]: f(z)=0 \hbox{ for all } z\in\ V\}.
$$

For $z=(z_1,\ldots,z_N)$, $z_j=x_j+iy_j$, we have 
$$d=\partial +\bar\partial, \ d^c=i(\bar\partial-\partial), \ dd^c = 2i\partial\bar\partial,$$
where $\partial(\cdot) = \sum_{j=1}^N\partial/\partial z_j(\cdot)\wedge dz_j$ and $\bar\partial(\cdot) = \sum_{j=1}^N\partial/\bar\partial z_j(\cdot)\wedge d\bar z_j$.  

\smallskip

For a psh function $u$, $dd^cu$ is a \emph{positive $(1,1)$-current}, i.e., a linear functional on smooth, compactly supported $(N-1,N-1)$-forms such that $\langle dd^cu,\varphi\rangle\geq 0$ if $\varphi$ is a \emph{strongly positive form}.\footnote{An example of such a form is $\varphi=f\beta^{n-1}$, where $f$ is a non-negative test function and  $\beta=\sum_j dx_j\wedge dy_j$.  See e.g. \cite{klimek:pluripotential}, chapter 3.}  Here we write $\langle \cdot,\cdot\rangle$ to denote the pairing of a current and a test form.

\section{Noether Presentation}\label{sec:noether}

In this section we construct good coordinates for computation on an affine variety in $\CC^N$.   First,  we recall the {grevlex\footnote{graded reverse lexicographic} monomial ordering} on polynomials in $\CC[z]=\CC[z_1,\ldots,z_N]$.

\begin{definition}\label{def:grevlex} \rm
The \emph{grevlex monomial ordering} is the ordering $\prec$  in which $z^{\alpha}\prec z^{\beta}$ if
\begin{enumerate}
\item either $|\alpha|<|\beta|$; or
\item $|\alpha|=|\beta|$ and there exists $j\in\{1,\ldots,N\}$  such that $\alpha_j<\beta_j$ and $\alpha_k=\beta_k$ for all $k<j$.
\end{enumerate}
\end{definition}
Here, we are using standard multi-index notation, $\alpha=(\alpha_1,\ldots,\alpha_N)$ and $|\alpha|=\alpha_1+\cdots+\alpha_N$; and similarly for $\beta$.  

\medskip
  
Denote by $\lt(p)$ the leading term of a polynomial $p$ with respect to the grevlex ordering.  Recall that a \emph{Gr\"obner basis} $\{f_1,\ldots,f_N\}$ of an ideal $I\subset\CC[z]$ is a collection of polynomials  satisfying
$$
I=\langle f_1,\ldots,f_N\rangle \ \hbox{ and }\   \langle\lt(I)\rangle = \langle \lt(f_1),\ldots,\lt(f_N)\rangle;
$$
here $\langle \lt(I)\rangle$ is the ideal generated by the monomials $\lt(I) = \{\lt(p): p\in I\}$.   

\begin{definition}\label{def:normalform} \rm 
The \emph{(grevlex) normal form} of a polynomial $p$ (with respect to $I$)  is the unique polynomial   $r(z) = \sum_{\alpha}r_{\alpha}z^{\alpha}$ for which the following properties hold:
\begin{enumerate}
\item $p = \sum_{j=1}^s q_jf_j + r$ where $\{f_1,\ldots, f_s\}$ is a Gr\"obner basis of $I$; and 
\item   $z^{\alpha}\not\in\langle\lt(I)\rangle$ whenever $r_{\alpha}z^{\alpha}$ is a nonzero term of $r$. 
\end{enumerate}
\end{definition}
The existence and uniqueness of $r$ follows from the generalized division algorithm for multivariable polynomials and the fact that the divisors are a Gr\"obner basis.  It provides a standard polynomial representative of elements of $\CC[z]/I$, which may be used to give a well-defined notion of degree.

\begin{definition}\rm
Let $V\subset\CC^N$ be an affine variety and $p\in\CC[z]$.  Then we define the \emph{degree of $p$ on $V$} by $\deg_V(p):=\deg(r)$, where $r$ is the normal form of $p$ with respect to $\bI(V)$.
\end{definition}
Note that the condition $|\alpha|=|\beta|$ in the definition of grevlex easily implies  
$\deg_V(p)\leq \deg(p).$

\smallskip

  Given an invertible complex linear map $L\colon\CC^N\to\CC^N$,  let
$L^*\colon\CC[z]\to\CC[z]$ be defined by $p(z)\mapsto p(L(z))$.  We seek a change of coordinates such that the grevlex normal form becomes particularly simple.  The following is a standard result in commutative algebra (Noether normalization).

\begin{theorem}\label{thm:noether}
Let $V\subseteq\CC^N$ be an (irreducible) affine variety and $I=\bI(V)$.  Then there is a non-negative integer $m\leq  N$ such that for a generic complex linear change of coordinates $z\mapsto L(z)$ we have the following.
\begin{enumerate}
\item Write $z=(x_1,\ldots,x_m,y_1,\ldots,y_{N-m})=:(x,y)$ and $J:=L^*(I)$.  Then the canonical map $\CC[x]\to \CC[x,y]/J$ induced by the inclusion $\CC[x]\subseteq\CC[x,y]$ is injective and finite (i.e., exhibits $\CC[x,y]/J$ as a finite extension of $\CC[x]$.)
\item For each $j=1,\ldots,N-m$ there is a $d_j\in\NN$ and an irreducible polynomial $g_i\in J$ of total degree $d_j$ of the form
\begin{equation}\label{eqn:gj}
g_j(x,y) \  = \  y_j^{d_j} -   \sum_{k=0}^{d_j-1} g_{jk}(x,y_1,\ldots,y_{j-1})y_j^k 
\end{equation}
 i.e.,  $\deg(g_{jk}) + k\leq d_j$ for each $k=0,\ldots,d_j-1$.  
\end{enumerate}
\end{theorem}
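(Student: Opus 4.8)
The plan is to prove part (1) by the classical Noether normalization argument via a generic linear change of coordinates, and then to extract the polynomials $g_j$ in part (2) by studying the integral dependence relations of each $y_j$ over $\CC[x]$, adjoining the $y_i$ one at a time.

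First I would establish the integer $m$ and part (1). Set $m=\dim V$, the Krull dimension of $\CC[z]/I$ (equivalently the complex-analytic dimension). The standard proof goes by descending induction on the number of variables: if $I=0$ there is nothing to do and $m=N$; otherwise pick a nonzero $f\in I$ of total degree $e$, and perform a linear change of coordinates $z\mapsto L(z)$ so that after the substitution the coefficient of $z_N^e$ in $f$ is a nonzero constant. Concretely, writing $f=f_e+(\text{lower order})$ with $f_e$ the degree-$e$ homogeneous part, the set of directions $v\in\CC^N$ with $f_e(v)\neq 0$ is a nonempty Zariski-open set, so a generic $L$ works; after the change, $f$ is monic in $z_N$ up to a scalar, exhibiting $z_N$ as integral over $\CC[z_1,\ldots,z_{N-1}]$ modulo $I$. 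One then applies the inductive hypothesis to the contraction $I\cap\CC[z_1,\ldots,z_{N-1}]$, whose variety has dimension $m$, and composes the linear changes. The fact that the composite of generic linear changes is again generic (a nonempty Zariski-open condition on $GL_N(\CC)$) is what lets us say a single generic $L$ achieves everything simultaneously. Finiteness of $\CC[x]\hookrightarrow\CC[x,y]/J$ follows since each $y_j$ is integral over the previous ring, and injectivity is because $J\cap\CC[x]=(0)$: a nonzero element there would cut down the dimension below $m$.

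Next, for part (2), fix $j\in\{1,\ldots,N-m\}$. Since $y_j$ is integral over $\CC[x,y_1,\ldots,y_{j-1}]/(J\cap\CC[x,y_1,\ldots,y_j])$ — indeed over $\CC[x]$ modulo $J$ — there is a monic polynomial relation in $J$, and I would take $g_j$ to be (an irreducible factor of) the minimal such relation, namely a generator of the principal ideal obtained by intersecting $J$ with $\CC[x,y_1,\ldots,y_j]$ and eliminating: this intersection is a height-one prime (since passing from the $(j-1)$-th to the $j$-th coordinate ring is a finite extension raising transcendence degree by one over $\CC[x]$ only through $y_j$), hence principal in the UFD $\CC[x,y_1,\ldots,y_j]$, with generator $g_j$ irreducible. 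Monicity in $y_j$ comes from the generic choice of $L$, exactly as in part (1): one arranges that the leading $y_j$-coefficient is a nonzero constant, which can be normalized to $1$, giving the shape in \eqref{eqn:gj}. The degree bound $\deg(g_{jk})+k\le d_j$ — equivalently $g_j$ has total degree $d_j$ with $y_j^{d_j}$ its unique top-degree monomial in $y_j$ — again follows from genericity: choosing $L$ so that for the relevant generator the degree-$d_j$ homogeneous part is a nonzero multiple of $y_j^{d_j}$ forces every other monomial $g_{jk}(x,y_1,\ldots,y_{j-1})y_j^k$ to satisfy $\deg(g_{jk})+k\le d_j$, i.e.\ these are lower-order terms. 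One must check the finitely many genericity conditions (one open condition per coordinate, plus the conditions ensuring each elimination ideal has the right leading behavior) can be imposed simultaneously, which is automatic since a finite intersection of nonempty Zariski-open subsets of $GL_N(\CC)$ is nonempty.

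The main obstacle I expect is part (2): getting not merely \emph{a} monic integral relation but one that is \emph{irreducible}, lies in $J$, involves only $y_1,\ldots,y_j$ (not $y_{j+1},\ldots$), and has the sharp total-degree constraint $\deg(g_{jk})+k\le d_j$ — all from a single generic $L$. The cleanest route is to phrase it as: the elimination ideal $J_j:=J\cap\CC[x,y_1,\ldots,y_j]$ is prime of coheight $m$, hence of height $j$ in $\CC[x,y_1,\ldots,y_j]$; by induction the extension $\CC[x,y_1,\ldots,y_{j-1}]/J_{j-1}\hookrightarrow\CC[x,y_1,\ldots,y_j]/J_j$ is finite and birational onto a hypersurface, so $J_j=\langle J_{j-1},g_j\rangle$ with $g_j$ the minimal polynomial of $y_j$ over the fraction field, cleared of denominators; irreducibility of $g_j$ then reduces to irreducibility of that minimal polynomial together with primitivity (which the generic monicity supplies). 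I would present the degree bookkeeping last, as it is the most computational part but is routine once the structural claims are in place.
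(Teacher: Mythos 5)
Your treatment of part (1) follows essentially the same route as the paper (a generic linear change making a chosen $f\in I$ monic in the last variable, then induction on the elimination ideal in one fewer variable), but your mechanism for part (2) has genuine gaps. First, the claim that $J_j:=J\cap\CC[x,y_1,\ldots,y_j]$ is a height-one prime, hence principal in the UFD $\CC[x,y_1,\ldots,y_j]$, is false for $j\geq 2$: the projected variety has dimension $m$, so $J_j$ has height $j$. You partially correct this later, but then lean on the assertion that $J_j=\langle J_{j-1},g_j\rangle$ with $g_j$ the denominator-cleared minimal polynomial of $y_j$; this ``finite and birational onto a hypersurface'' picture is unjustified and fails in general, because the ring $\CC[x,y_1,\ldots,y_{j-1}]/J_{j-1}$ need not be normal. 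For a non-normal base the minimal polynomial of an integral element over the fraction field need not have coefficients in the ring (e.g.\ $t$ over $\CC[t^2,t^3]$), clearing denominators destroys monicity, the lift of the minimal polynomial to $\CC[x,y_1,\ldots,y_j]$ is only defined modulo $J_{j-1}$, and irreducibility of that lift in the full polynomial ring does not follow from irreducibility over the fraction field. Waving at ``genericity of $L$'' does not repair these points, since the minimal polynomial is not an object you control directly through the initial linear change.

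The paper avoids all of this: each inductive step produces $g_j$ explicitly as $\hat f(c_N)^{-1}f(L(z))$ for some $f\in I$ of degree $d_j$, so $g_j$ automatically has total degree $d_j$ with $y_j^{d_j}$-coefficient $1$, which is exactly the bound $\deg(g_{jk})+k\leq d_j$ in (\ref{eqn:gj}) --- no extra genericity conditions on leading homogeneous parts are needed beyond $\hat f(c_N)\neq 0$. Irreducibility is then handled at the very end in one line: since $V$ is irreducible, $I$ is prime, so a reducible $g_j\in I$ has an irreducible factor lying in $I$, and that factor still has the form (\ref{eqn:gj}) because the leading homogeneous parts of the factors must multiply to $y_j^{d_j}$, forcing each to be a constant times a power of $y_j$. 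If you want to keep your elimination-ideal framing, you would at minimum need to replace the minimal-polynomial construction by this kind of factor-extraction argument (or first pass to the normalization), and drop the claims of principality and of $J_j=\langle J_{j-1},g_j\rangle$, which the theorem does not require.
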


\begin{proof}
The proof is by induction on $N$; we follow the argument in \cite{greuelpfister:singular}.  The case $N=1$ is almost trivial: $I=\langle p\rangle$ for some monic polynomial $p(z)=z^d+(\hbox{lower terms})$, and $\CC[z]/I$ is a finite extension of $\CC$.\footnote{In this case, $z$ is the `$y$' variable and there is no `$x$' variable.}   

For general $N$, let $f\in\ I$ be a polynomial of degree $\deg(f):=d\geq 1$, with $\hat f$ its leading homogeneous part.  Then for $L(z)=(c_1\cdot z,\ldots,c_N\cdot z)$ an invertible linear map, where $c_j\cdot z:=c_{j1}z_1+\cdots + c_{jN}z_N$, we have 
\begin{equation*}
\hat f(L(z)) \  = \  \hat f(c_N)z_N^d  \ + (\hbox{lower terms in } z_N) .
\end{equation*}
As long as the generic condition $\hat f(c_N)\neq 0$ holds, we may define $$\tilde g(z):= \hat f(c_N)^{-1}f(L(z)),$$ and $\tilde g$ has the property (\ref{eqn:gj}).  The natural map $\CC[z']\to\CC[z',z_N]/\langle \tilde g \rangle$ is injective and finite, because $\tilde g$ is a monic polynomial in $z_N$ with coefficients in $\CC[z']$.  It is also easy to see that elements of $J=L^*(I)$ map to elements of $J$.  Hence after quotienting out by $J$, the map $\CC[z']/J_0\to\CC[z]/J$ is also injective and finite (where $J_0=\CC[z']\cap J$).  If $J_0=\langle 0\rangle$ then we are done with $m=N-1$ and $g_1=\tilde g$.

Otherwise, applying induction to the affine variety $V_0=\bV(J_0)\subseteq\CC^{N-1}$, we obtain an  integer $m\leq N$ such that for a generic linear map $L_0:\CC^{N-1}\to\CC^{N-1}$, the conclusion of the theorem holds, say, with polynomials $g_{1},\ldots, g_{N-1-m}$.  

The theorem then holds for $N$ with the linear change of coordinates 
$$(z',z_N)\mapsto L(L_0(z'),z_N)$$ and the polynomials
$
g_{1},\ldots,g_{N-1-m}, g_{N-m}$, where $g_{N-m}(z):=\tilde g(L_0(z'),z_N).$

Since $V$ is irreducible, $I$ is prime.  If a $g_j$ constructed above is reducible, it must contain an irreducible fact in $I$.  We replace $g_j$ by this irreducible factor.
\end{proof}  


\begin{definition}\rm
Given an affine variety $V$, coordinates $(x,y)$ that satisfy Theorem \ref{thm:noether} will be called a \emph{Noether presentation for $V$}.
\end{definition}

A Noether presentation has a geometric interpretation.  First, recall the following definition (cf., \cite{gunning:introduction}).
\begin{definition}\rm
Let $X_1$ and $X_2$ be analytic varieties.  Then a surjective holomorphic map $\varphi: X_1\to X_2$ \emph{exhibits $X_1$ as a branched covering of $X_2$} if there is a dense open subset $\Omega$ of the regular part of $X_2$ such that for each $x\in\Omega$ there is some neighborhood $U$ of $x$ for which $\varphi^{-1}(U)$ is a union of disjoint open subsets of the regular part of $X_1$. The branched covering is \emph{locally $d$-sheeted (resp. finite)} if the number of these sets is $d$ (resp. finite).  We set  $\deg_x(\varphi):=d$ to be the \emph{degree of the covering map $\varphi$ at $x$}.  The set $X_2\setminus\Omega$ is called the \emph{branch locus}. \end{definition}

 Now $x\mapsto\deg_x(\varphi)$ is a locally constant function in $x$ (e.g. by elementary complex analysis).  Hence when $X_2$ is connected, it is a global constant so we may write $\deg(\varphi)$ (independent of $x$).

\begin{proposition}
Let $(x,y)$ be a Noether presentation for an irreducible affine variety $V\subset\CC^N$. 
\begin{enumerate}
\item  There exists a constant $A>0$ such that 
\begin{equation} \label{eqn:yx}
\|y\| \leq  A(1+\|x\|), \ \hbox{ for all } (x,y)\in V.
\end{equation}
\item If $P:\CC^m\times\CC^{N-m}\ni(x,y)\mapsto x\in\CC^m$ is the projection, then $P(V)=\CC^m$ and the restriction $P\bigl|_V$ exhibits $V$ as a finite branched covering over $\CC^m$.  (Hence $V$ is a complex manifold of dimension $m$ away from the branch locus.)
\end{enumerate}
\end{proposition}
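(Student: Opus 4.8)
The plan is to prove (1) by induction on the coordinate index, and then to deduce (2) from (1) together with the structure \eqref{eqn:gj} of the $g_j$ and the finiteness of $\CC[x]\hookrightarrow\CC[V]$ from Theorem~\ref{thm:noether}.

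For (1), I would show by induction on $j=1,\dots,N-m$ that there is a constant $A_j>0$ with $|y_j|\le A_j(1+\|x\|)$ for all $(x,y)\in V$; then \eqref{eqn:yx} holds with $A$ a suitable multiple of $\max_j A_j$. Since $g_j\in J=\bI(V)$, on $V$ the relation $g_j=0$ reads $y_j^{d_j}=\sum_{k=0}^{d_j-1}g_{jk}(x,y_1,\dots,y_{j-1})\,y_j^k$; the point is the degree bound $\deg g_{jk}\le d_j-k$, which together with the inductive estimates on $y_1,\dots,y_{j-1}$ gives $|g_{jk}(x,y_1,\dots,y_{j-1})|\le C_{jk}(1+\|x\|)^{d_j-k}$ on $V$. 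If $|y_j|>M(1+\|x\|)$ with $M\ge 1$, then dividing the relation by $|y_j|^{d_j}$ forces $1\le M^{-1}\sum_k C_{jk}$, which is impossible once $M>\sum_k C_{jk}$; so $A_j:=1+\sum_k C_{jk}$ works. I expect this part to be routine.

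For (2), the elementary facts come out quickly. A fibre $P^{-1}(x_0)\cap V$ lies in $\{y:g_j(x_0,y)=0,\ j=1,\dots,N-m\}$, which has at most $d_1\cdots d_{N-m}$ points (solve the monic equations in $y_1,y_2,\dots$ successively), so $P|_V$ has finite fibres of bounded size. By (1) the preimage under $P|_V$ of a compact $K\subset\CC^m$ is closed in $\CC^N$ and bounded, hence compact, so $P|_V$ is proper and in particular a closed map. Since $\CC[x]\hookrightarrow\CC[x,y]/J$ is injective (Theorem~\ref{thm:noether}(1)), $P|_V$ is dominant, and a dominant map with closed image is onto; thus $P(V)=\CC^m$. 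Finiteness of the ring extension also gives $\dim V=m$.

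The substantive part is exhibiting the dense open $\Omega$ over which $P|_V$ is a genuine covering. I would use that $P|_V\colon V\to\CC^m$ is a finite surjective morphism of irreducible varieties onto the smooth (hence normal) base $\CC^m$, with function-field extension $\CC(V)/\CC(x)$ finite and --- since we are in characteristic zero --- separable, of degree $d:=[\CC(V):\CC(x)]$. Such a morphism is generically étale: there is a nonempty Zariski-open $\Omega\subseteq\CC^m$ with $(P|_V)^{-1}(\Omega)\subseteq V_{\mathrm{reg}}$ and $(P|_V)^{-1}(\Omega)\to\Omega$ an unramified $d$-sheeted finite morphism, which over $\CC$ is exactly a $d$-sheeted topological covering; this is precisely the branched-covering property, and since $\Omega$ is connected the number of sheets is the constant $d$ (matching the remark preceding the proposition), while the local biholomorphism of a covering shows $V$ is an $m$-dimensional complex manifold over $\Omega$. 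Concretely one can take $\Omega$ inside the complement of the zero set of the discriminant in a primitive-element coordinate, together with the locus where a leading coefficient vanishes; alternatively one can build $\Omega$ by hand from the tower of simple branched covers $W_j=\{(w,y_j):g_j(w,y_j)=0,\ w\in W_{j-1}\}$, using that each $g_j$ is irreducible in $\CC[x,y_1,\dots,y_j]$ and hence separable in $y_j$, and then that $V$ is one of the $m$-dimensional components of $\bV(g_1,\dots,g_{N-m})$. I expect this last step --- verifying that the relevant discriminant is not identically zero on the base of each stage, and pinning down $\Omega$ (in the tower approach this is complicated by the fact that $V$ is only a component of $\bV(g_1,\dots,g_{N-m})$, so one must also excise the images of the pairwise component intersections) --- to be the main obstacle; the separability input from characteristic zero is what makes it go through.
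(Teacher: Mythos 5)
Your part (1) is correct and is essentially the paper's own estimate (bound each $g_{jk}$ on $V$ by $C_{jk}(1+\|x\|)^{d_j-k}$ using the degree bound in (\ref{eqn:gj}) plus the bounds already obtained for $y_1,\dots,y_{j-1}$, then divide the monic relation by $y_j^{d_j}$ to get a contradiction); the paper merely organizes the same computation through its induction on the projection $P=P_2\circ P_1$ rather than directly on the index $j$.

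For part (2) you take a genuinely different route. The paper gets $P(V)=\CC^m$ fibrewise from the fundamental theorem of algebra applied to the monic equations $g_j=0$, and gets the covering structure by an elementary induction: $P_1\colon V\to V_1$ is a local biholomorphism off $\bV(\partial g_{N-m}/\partial y_{N-m})$ by the implicit function theorem, $P_2\colon V_1\to\CC^m$ is a branched cover by the inductive hypothesis, and the composition is a finite branched cover of degree $\deg(P_1)\deg(P_2)$. You instead get surjectivity from properness (a consequence of part (1)) together with dominance (injectivity of $\CC[x]\to\CC[x,y]/J$), and you get the covering structure by invoking generic \'etaleness of a finite surjective morphism onto the normal base $\CC^m$ in characteristic zero, with the number of sheets identified as $[\CC(V):\CC(x)]$. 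Both arguments are sound; what each buys is clear. The paper's proof is self-contained, uses nothing beyond the fundamental theorem of algebra and the implicit function theorem, and exhibits the branch locus concretely (images of the loci $\partial g_j/\partial y_j=0$ together with the branch locus of the lower stage). Your proof is shorter and cleaner \emph{modulo} the cited theorem, and it neatly sidesteps the issue you flag, namely that $V$ is a priori only a component of $\bV(g_1,\dots,g_{N-m})$ (an issue which the paper handles via irreducibility of $V$ and which resurfaces in its claim $P_1(V)=\bV(g_1,\dots,g_{N-m-1})$, addressed in the remark following the proposition via the Gr\"obner basis property of the $g_j$). The one caution is that the step you yourself identify as the main obstacle is not actually carried out: you either cite generic \'etaleness as a black box or sketch a discriminant/tower construction of $\Omega$. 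If one insists on proving that step by hand, the tower version essentially reproduces the paper's induction with the implicit function theorem, with the discriminant (or $\partial g_j/\partial y_j$) not vanishing identically on $V$ playing the role that separability plays in your field-theoretic phrasing; so the two proofs are, at bottom, the elementary and the scheme-theoretic incarnations of the same ramification argument, and your write-up is acceptable provided the generic-\'etaleness theorem is taken as known.
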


\begin{proof}
We use induction on $N-m$, i.e., the number of polynomials $g_1,\ldots,g_{N-m}$ given by Theorem \ref{thm:noether} that define $V$.  (Note that the argument does \emph{not} depend on $N$, the dimension of the ambient space.) 

\bigskip 
\noindent {\bf Step 1: Base case ($N-m=1$).} 
\begin{enumerate} 
\item We have $(x,y)=(x_1,\ldots,x_{N-1},y)$, and $V=\bV(g_1)$ by hypothesis, where as in (\ref{eqn:gj}), 
\begin{equation}\label{eqn:yd}
g_1(x,y) \ = \ y^{d_1} - \sum_{k=0}^{d_1-1}g_{1k}(x)y^k =0  \ \hbox{ for all } (x,y)\in V.
\end{equation}
We claim that the above equation implies the inequality
\begin{equation}\label{eqn:|y|}
|y| \leq 2\max_{0\leq j\leq d_1-1} |g_{1j}(x)|^{1/(d_1-j)}  \ \hbox{ for all } (x,y)\in V.
\end{equation}
For if not,                                                                                                                                          
  \begin{eqnarray*}
|g_1(x,y)y^{-d_1}| &=& | 1 + g_{1(d_1-1)}(x)y^{-1} + \cdots + g_{10}(x)y^{-d_1}|   \\
&\geq & 1- \left( |g_{1(d_1-1)}(x)y^{-1}| + \cdots + |g_{10}(x)y^{-d_1}|     \right) \\
&\geq & 1- (2^{-1}+\cdots+ 2^{-d_1})>0
\end{eqnarray*}
which is a contradiction. Hence (\ref{eqn:|y|}) holds.                                                 

Fixing $j$, there exists a constant $C_j$ such that $|g_{1j}(x)|\leq C_j(1+\|x\|)^{d_1-j}$ for all $x\in\CC^n$, since $g_{1j}(x)$ is a polynomial of degree at most $d_1-j$.  Putting this into the right-hand side of (\ref{eqn:|y|}) and letting $A=\max_j C_j^{1/(d_1-j)}$, we obtain (\ref{eqn:yx}).

\item To show that $P$ maps $V$ onto $\CC^{N-1}$, fix $x\in\CC^{N-1}$. Then  (\ref{eqn:yd}) is a nonzero polynomial equation of degree $d_1$ in $y$; hence  by the fundamental theorem of algebra, has $d_1$ solutions (counting multiplicity); write 
\begin{equation}\label{eqn:g1x} g_1(x,y)=\prod_{j=1}^{d_1} (y-\alpha_j(x)). \end{equation}  This gives at least one point $(x,y)\in V$ that maps to $x$ under $P$; so $P(V)=\CC^{N-1}$.  The holomorphic implicit function theorem says that locally, $P$ has a local holomorphic inverse on $V$  at all points away from the subvariety $V\cap\{\frac{\partial}{\partial y}g_1=0\}$.  In fact, the local inverses are given by $\alpha_j(x)$ in (\ref{eqn:g1x}), and at such points $x$ the values of $\alpha_j(x)$ are distinct for each $j$.   Hence $V$ is a holomorphic branched cover over $\CC^{N-1}$, with $d_1$ sheets.
\end{enumerate}


\noindent {\bf Step 2: Induction.}   
Write $P$ as the composition $P_{2}\circ P_{1}$ given by 
$$
(x,y)=(x,y',y_{N-m})\in\CC^N\stackrel{P_{1}}{\longmapsto} (x,y')\in\CC^{N-1}   \stackrel{P_{2}}{\longmapsto} x\in\CC^{N-m}.
$$
Then $P_1(V)=:V_1$ is contained in the variety $\bV(g_1,\ldots,g_{N-m-1})\subset\CC^{N-1}$.  Here we are using the fact that the polynomials $g_1,\ldots,g_{N-m-1}$ of (\ref{eqn:gj})  are \emph{independent of the last coordinate $y_{N-m}$}. 

 On the other hand, if $(x,y')\in \bV(g_1,\ldots,g_{N-m-1})$ then the fundamental theorem of algebra applied to $s\mapsto g_{N-m}(x,y',s)$ gives the existence of $(x,y',y_{N-m})\in V$, so that we have the reverse containment $\bV(g_1,\ldots,g_{N-m-1})\subseteq V_1$.   Hence $V_1=\bV(g_1,\ldots,g_{N-m-1})$, which shows that $V_1$ is an affine variety in $\CC^{N-1}$ and $(x,y')$ is a Noether presentation for $V_1$.  To prove each part of the theorem we apply induction to $V_1$ and  $W:=\bV(g_{N-m})\subset\CC^N$.
\begin{enumerate}
\item Let $(x,y',y_{N-m})\in V\subset\bV(g_{N-m})$. Then $(x,y')\in V_1$ and there exist constants $A_1,A_2>0$ such that 
$$
\|y_{N-m}\|\leq A_1(1+\|(x,y')\|) \hbox{ and }   \|y'\| \leq A_2(1+\|x\|).
$$
Hence 
\begin{eqnarray*}
\|y\| \leq |y_{N-m}|+\|y'\| &\leq&   A_1(1+\|(x,y')\|) + A_2(1+\|x\|) \\
&\leq& A_1(1+\|x\|+\|y'\|) + A_2(1+\|x\|) \\
&\leq& (A_1(1+A_2)+A_2)(1+\|x\|) =:A(1+\|x\|).
\end{eqnarray*}

\item We have $P(V)=P_2(V_1)=\CC^m$. By induction applied to $V_1$,  the projection $P_2$ is locally biholomorphic away from a proper subvariety $V_s\subset V_1$.  Also, by the implicit function theorem, $P_1\colon W\to\CC^{N-1}$ is a local biholomorphism at each point of $W\setminus W_s$, where $W_s= \bV(\frac{\partial}{\partial y_{N-m}}g_{N-m})$.  Hence $P_1\colon V\to V_1$ is a local  biholomorphism away from $P_1^{-1}(V_s)\cup W_s$.   So $P=P_2\circ P_1$ is a local biholomorphism at each point of $V\setminus(P_1^{-1}(V_s)\cup W_s)$ which gives $V$ as a branched covering over $\CC^m$.  Clearly, the covering is finite, of degree $\deg(P_1)\cdot\deg(P_2)$. 
\end{enumerate}\end{proof}

\begin{remark}\rm
Clearly, $V_1$ is irreducible if $V$ is, and this enters into the proof of the second part.     
Irreducibility of $V$ is not necessary for Theorem \ref{thm:noether}; but if $V$ were reducible, some of the $g_j$s will be reducible polynomials.  The conclusion of the second part of the proposition may fail because:
\begin{itemize}
\item A component of $V$ covers $\CC^{m+k}$ rather than $\CC^m$.  When this occurs, there are nontrivial algebraic relations among certain factors of the $g_j$s. Hence there is a component of $V$ for which one can cut down (to $N-m-k$, say) the number of defining polynomials.
\item A component of $V$ is completely contained in $W_s$.  Then there are nontrivial algebraic relations involving factors of the $g_j$s and $\frac{\partial}{\partial y_{N-m}} g_{N-m}$.\end{itemize}

Irreducibility of $V$, and hence of each polynomial in  $G=\{g_1,\ldots,g_{N-m}\}$, also guarantees that $G$ is a Gr\"obner basis,\footnote{One can check, using (\ref{eqn:gj}) together with irreducibility, that $G$ satisfies the so-called \emph{Buchberger criterion} for a Gr\"obner basis.  See e.g. \cite{coxlittleoshea:ideals}, chapter 2.}  so that  $\langle\lt(I)\rangle=\langle y_1^{d_1},\ldots,y_{N-m}^{d_{N-m}}\rangle$.  
Hence a normal form has the structure
\begin{equation}\label{eqn:normalform}
p(x,y) = \sum_{\alpha\in \calN_y} p_{\alpha}(x)y^{\alpha}
\end{equation}
where $p_{\alpha}\in\CC[x]$ for each $\alpha$, and $\alpha$ is taken over the finite set $$\calN_y:=\{\alpha=(\alpha_1,\ldots,\alpha_{N-m})\in\ZZ^{N-m}: \alpha_j<d_j \hbox{ for each } j\}.$$
\end{remark}

Since $\CC^m$ is connected, the projection $P$ has a well-defined global degree, $\deg(P)$.  A similar connectedness argument can be used to show the following.  

\begin{lemma}
For any two Noether presentations with projections $P,\tilde P$, we have $\deg(P)=\deg(\tilde P)$.  
\end{lemma}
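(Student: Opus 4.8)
The plan is to realise every Noether‑presentation projection as a member of one family parametrised by a connected space, and to prove that the covering degree is locally constant on that family; constancy then follows by density. Before anything else, note the reduction: in any Noether presentation $\CC[x,y]/J$ is finite over $\CC[x_1,\dots,x_m]$, and is isomorphic as a ring to $\CC[V]$ (the coordinate change $p\mapsto p\circ L$ carries $\bI(V)$ isomorphically onto $J$), so $m$ is the Krull dimension of $\CC[V]$, i.e.\ $m=\dim V$, independently of the presentation; hence $P$ and $\tilde P$ both map $V$ onto the \emph{same} $\CC^m$. A Noether presentation is determined by an invertible linear change of coordinates $z\mapsto L(z)$ on $\CC^N$, $L\in GL_N(\CC)$; write $P_L\colon V\to\CC^m$ for the associated projection (onto the first $m$ of the new coordinates). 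Thus it suffices to show $\deg(P_{L_0})=\deg(P_{L_1})$ whenever $L_0,L_1$ both yield Noether presentations. Let $\mathcal U\subseteq GL_N(\CC)$ be the set of $L$ for which $P_L$ is a proper surjective branched covering of $\CC^m$, and for $L\in\mathcal U$ set $d(L):=\deg(P_L)$, well defined since $\CC^m$ is connected. By the Proposition every $L$ arising from a Noether presentation lies in $\mathcal U$ (properness being exactly the content of the growth estimate (\ref{eqn:yx})), so $L_0,L_1\in\mathcal U$; and by the genericity in Theorem \ref{thm:noether}, $\mathcal U$ contains a nonempty Zariski‑open subset $\mathcal U_0$ of $GL_N(\CC)$, which, since $GL_N(\CC)$ is irreducible, is connected and dense in the Euclidean topology.

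The crucial step is that $d$ is locally constant on $\mathcal U$. Fix $L^*\in\mathcal U$, set $d^*=d(L^*)$, and choose a regular value $x^*$ of $P_{L^*}$, so $P_{L^*}^{-1}(x^*)=\{p_1,\dots,p_{d^*}\}$ consists of $d^*$ distinct smooth points of $V$ at which $P_{L^*}$ is a local biholomorphism. The implicit function theorem gives disjoint neighbourhoods $U_i\ni p_i$ and a neighbourhood $\mathcal N$ of $L^*$ such that for $L\in\mathcal N$ the set $P_L^{-1}(x^*)\cap U_i$ is a single smooth point at which $P_L$ is a local biholomorphism. It remains to exclude stray preimages outside $\bigcup_i U_i$, and here properness enters: $P_L$ is proper precisely when $\bar V\subseteq\PP^N$ avoids the $(N-m-1)$‑dimensional linear subspace at infinity cut out by the kernel of the linear part of $P_L$ — an open condition on $L$ — and a compactness argument upgrades this, after shrinking $\mathcal N$, to a bound confining $P_L^{-1}(x^*)$ to one fixed compact subset of $V$ for all $L\in\mathcal N$. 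If some sequence $L_n\to L^*$ carried a preimage $q_n\notin\bigcup_i U_i$, a subsequence of the $q_n$ would converge to a point of $P_{L^*}^{-1}(x^*)=\{p_i\}$, contradicting the count in $U_i$. Hence for $L\in\mathcal N$ the point $x^*$ is a regular value of $P_L$ with exactly $d^*$ preimages, $P_L$ is still a proper surjective branched covering, and $d(L)=d^*$.

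Granting this, $d$ is constant on the connected set $\mathcal U_0$, say $d\equiv c$; and since $\mathcal U_0$ is dense in $GL_N(\CC)$, hence in $\mathcal U$, every $L\in\mathcal U$ is a Euclidean limit of points of $\mathcal U_0$, so $d(L)=c$ by continuity of the locally constant $d$. In particular $\deg(P_{L_0})=c=\deg(P_{L_1})$, which is the lemma. I expect the local‑constancy step — and within it the extraction of a neighbourhood‑uniform properness bound, so that no preimage appears ``from infinity'' — to be the main obstacle. It can be sidestepped by a more algebraic argument: for $L\in\mathcal U$ the fact that $\bar V$ misses the centre of the projection means the linear projection extends to a morphism $\bar P_L\colon\bar V\to\PP^m$ with $\bar P_L^{\,*}\mathcal O(1)=\mathcal O(1)|_{\bar V}$, so that $\deg(P_L)=\deg(\bar P_L)=\deg\bar V$, and the degree of the projective closure plainly does not depend on the presentation.
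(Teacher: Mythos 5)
Your proof is correct and, in its main line, follows the same architecture as the paper's: parametrize presentations by linear maps $L$, show that $L\mapsto\deg(P_L)$ is locally constant, and conclude by connectedness/density of the good set of $L$'s. The difference is in how local constancy is established. The paper constructs an explicit bijection between local inverses of $P$ and of the perturbed projection $P_\epsilon$ (via $\Phi_\epsilon:=P_\epsilon\circ L_\epsilon\circ P^{-1}$), which automatically accounts for every sheet; you instead count the fiber over a fixed regular value $x^*$ and must therefore rule out sheets arriving from infinity, which is exactly where your observations that properness of $P_L|_V$ is equivalent to $\bar V$ missing the center of projection at infinity (an open condition in $L$) and the ensuing compactness/no-escape argument carry the load. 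You are also more explicit than the paper about why $m$ is presentation-independent and why $d(L)$ makes sense on the larger set $\mathcal{U}$ of proper branched-covering projections. One micro-gap: exactly $d^*$ unbranched preimages over the single point $x^*$ gives $\deg(P_L)=d^*$ only after noting that, by the same properness bound, $P_L^{-1}(W)\subset\bigcup_i U_i$ for a small neighborhood $W\ni x^*$, so that generic points of $W$ also have exactly $d^*$ preimages; this is implicit in your set-up and easily supplied. Finally, your closing alternative, $\deg(P_L)=\deg(\bar P_L)=\deg\bar V$ because $\bar P_L^{\,*}\mathcal{O}(1)=\mathcal{O}(1)|_{\bar V}$, is a genuinely different, coordinate-free route that the paper does not take: it yields the lemma immediately from the intrinsic projective degree, at the cost of importing projective intersection theory rather than staying within the paper's elementary analytic framework.
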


\begin{proof}
 A complex linear perturbation of $\CC^N$ (i.e. a linear map $L_{\epsilon}=I+\epsilon T$ where $\epsilon<<\|T\|$) takes a Noether presentation $(x,y)$ to a Noether presentation $(x_{\epsilon},y_{\epsilon})$, as long as $\epsilon$ is sufficiently small: we have  $(x_{\epsilon},y_{\epsilon})=(x,y) + \epsilon T(x,y)$. 
The local inverses $P^{-1},P_{\epsilon}^{-1}$  of the projections $P(x,y):=x$ and $P_{\epsilon}(x_{\epsilon},y_{\epsilon}):= x_{\epsilon}$ are holomorphic, and $\Phi_{\epsilon}:=P_{\epsilon}\circ L_{\epsilon}\circ P^{-1}$ is a local biholomorphic map that goes to the identity as $\epsilon\to 0$.    Given a local inverse $P^{-1}$, the composition   
$ L\circ P^{-1}\circ\Phi_{\epsilon}^{-1}$ is a nearby local inverse for $P_{\epsilon}$.  Vice versa, given a local inverse $P_{\epsilon}^{-1}$, a nearby local inverse of $P$ is given by $L^{-1}\circ P_{\epsilon}^{-1}\circ\Phi_{\epsilon}$.  Hence the number of local inverses is the same for $P$ and $P_{\epsilon}$, so $\deg(P)=\deg(P_{\epsilon})$.  

Now, fix some reference coordinate system and let $L$ be a linear map that transforms these coordinates to a Noether presentation with projection $P_L$.  If we identify the collection of all such linear maps $L$ (equivalently, $N\times N$ matrices) with $\CC^{N^2}$, the maps that give Noether presentations form a connected open subset of $\CC^{N^2}$.\footnote{Here we use the fact that the complement of a proper analytic subset of $\CC^{N^2}$ is connected.}  By the previous paragraph, $L\mapsto\deg(P_L)$ is a locally constant function on this set.  Hence $\deg(P_L)$ is a constant {independent of $L$}.  \end{proof} 

\begin{definition}\rm Define the \emph{degree of $V$} by $\deg(V):= \deg(P)$, where $P:V\to\CC^m$ is the projection in a Noether presentation of $V$. The \emph{dimension of $V$} is $\dim(V):=m$.
\end{definition}

We give some examples of Noether presentations.
\begin{example}\rm
Let $V\subset\CC^2$ be given by the equation $z_1z_2=1$.  Then $(x,y)=(z_1,z_2)$ is not a Noether presentation.  Similar to the proof of Theorem \ref{thm:noether}, write $$z_1=a_1x+b_1y, \quad z_2=a_2x+b_2y,$$ then the equation for $V$ transforms into $b_1b_2y^2+ (a_1b_2+a_2b_1)xy + a_1a_2x^2 =1$, and  $(x,y)$ is a Noether presentation if $b_1b_2\neq 0$.  
\end{example}   

\begin{example}\rm
Let $V\subset\CC^2$ be given by the equation $z_1^2=z_2^3$.  Then $(x,y)=(z_1,z_2)$  is a Noether presentation, while 
 $(x,y)=(z_2,z_1)$ is not.  Clearly, (\ref{eqn:yx}) fails for the latter because $|z_1|=|z_2|^{3/2}$.
\end{example}




\begin{remark}\rm
The existence of coordinates for which (\ref{eqn:yx}) holds at all points on an affine variety was already known by Sadullaev and Zeriahi.  Such an estimate also characterizes algebraicity. Suppose $W$ is an analytic subvariety of $\CC^N$, and suppose there exist constants $A,B>0$ such that  $$\|y\|\leq A(1+\|x\|)^B \ \hbox{ for all } (x,y)\in W.$$   Rudin has shown \cite{Rudin:geometric}  that $W$ must therefore be algebraic.
\end{remark}

\section{The Lelong Class} \label{sec:lelong}

Let $V$ be an affine variety in $\CC^N$.  Then a function $u\colon V\to[-\infty,\infty)$ is plurisubharmonic (psh) on $V$ if in a neighborhood of each point, $u$ is locally the restriction of a psh function in a local embedding into $\CC^N$.

\begin{remark}\label{rem:21}  \rm
The above notion is sufficient for this paper but a weaker notion is needed to get a class with good compactness properties (\cite{sadullaev:estimate}, \cite{zeriahi:pluricomplex}, \cite{dinhsibony:equidistribution}).  A function is \emph{weakly psh} on a complex space $X$ if it is upper semicontinuous on $X$ and psh in local coordinates about every regular point of $X$.  Both of these notions coincide when $X$ is {smooth}. 
\end{remark}  

The \emph{Lelong class $\calL(V)$} is the class of plurisubharmonic (psh) functions on $V$ of at most logarithmic growth:
$$
\calL(V) \  := \  \{ u \hbox{ psh on } V\colon \ \exists C\in\RR \hbox{ such that } u(z)\leq \log^+\|z\| + C, \forall z\in V\}. 
$$
We also define the class 
$$
\calL^+(V) \ := \  \{u\in \calL(V): \ \exists c\in\RR \hbox{ such that } u(z)\geq \log^+\|z\| + c, \forall z\in V\}.
$$
It is easy to see that these classes are invariant under complex linear changes of coordinates. 
Also, for a real constant $c>0$, write
 \[
c\calL(V):=\{ cu\colon u\in\calL(V)\},   \quad c\calL^+(V):= \{cu\colon u\in\calL^+(V)\}.   \]

\begin{proposition}\label{prop:L}
 Suppose that $z=(x,y)$ is a Noether presentation for an affine algebraic variety $V\subset\CC^N$.  Then 
\begin{enumerate}
\item $\log^+\|x\|\in \calL^+(V)$.
\item For any polynomial $p$ with $\deg_V(p)\geq 1$, we have $\displaystyle \frac{1}{\deg_V(p)}\log|p|\in\calL(V)$. 
\end{enumerate}
\end{proposition}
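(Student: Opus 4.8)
The plan is to prove the two assertions separately, using the growth estimate \eqref{eqn:yx} from the Noether presentation as the main tool.

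For part (1), I would first note that $\log^+\|x\| = \log^+\|P(z)\|$ where $P$ is the projection, and this is manifestly psh on $V$ since it is the pullback under the holomorphic map $P$ of a psh function on $\CC^m$ (equivalently, $\log^+\|x\| = \max(0, \frac12\log(|x_1|^2+\cdots+|x_m|^2))$ is psh in the ambient coordinates, hence restricts to a psh function on $V$). For the upper bound: since $\|x\|\le\|z\|$ trivially, we get $\log^+\|x\|\le\log^+\|z\|$, so $\log^+\|x\|\in\calL(V)$ with $C=0$. For the lower bound making it $\calL^+(V)$: by \eqref{eqn:yx} there is $A>0$ with $\|y\|\le A(1+\|x\|)$ on $V$, hence $\|z\|\le\|x\|+\|y\|\le (1+A)(1+\|x\|)\le (1+A)(1+\|x\|)$, which gives $\log^+\|z\|\le \log^+\|x\| + \log(2(1+A))$ after a short estimate (splitting into the cases $\|x\|\le 1$ and $\|x\|\ge 1$). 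Rearranging yields $\log^+\|x\|\ge \log^+\|z\| - c$ for a suitable constant $c$, so $\log^+\|x\|\in\calL^+(V)$.

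For part (2), let $p$ be a polynomial with $d:=\deg_V(p)\ge 1$, and let $r$ be its grevlex normal form with respect to $\bI(V)$, so $p = r$ on $V$ and $\deg(r) = d$. The function $\frac1d\log|p| = \frac1d\log|r|$ on $V$ is psh on $V$ because $\log|r|$ is psh on $\CC^N$ (logarithm of the modulus of a holomorphic function), hence restricts to a psh function on $V$. For the growth bound, I need $\log|r(z)|\le d\log^+\|z\| + C$ on $V$. Writing $r(x,y)=\sum_{\alpha\in\calN_y} r_\alpha(x)y^\alpha$ as in \eqref{eqn:normalform} with $\deg(r_\alpha) + |\alpha|\le d$ (this is exactly the grevlex condition $\deg_V(p)=\deg(r)$ combined with $\alpha\in\calN_y$), one estimates $|r_\alpha(x)y^\alpha|\le C_\alpha(1+\|x\|)^{\deg r_\alpha}\|y\|^{|\alpha|}\le C_\alpha(1+\|x\|)^{\deg r_\alpha}(A(1+\|x\|))^{|\alpha|}\le C_\alpha' (1+\|x\|)^d$ on $V$, again using \eqref{eqn:yx}. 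Summing over the finitely many $\alpha$ and using $\|x\|\le\|z\|$, we get $|r(z)|\le C''(1+\|z\|)^d$ on $V$, hence $\log|r(z)|\le d\log(1+\|z\|) + \log C''\le d\log^+\|z\| + C$ for a suitable $C$ (after the standard $\log(1+\|z\|)\le\log^+\|z\| + \log 2$ estimate). Dividing by $d$ gives $\frac1d\log|p|\in\calL(V)$.

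The main obstacle is part (2): one must be careful that the normal form $r$, not $p$ itself, is what controls the growth \emph{on $V$} — a general $p$ with $\deg(p)>\deg_V(p)$ need not satisfy $\frac{1}{\deg_V(p)}\log|p|\in\calL(V)$ if we only knew $\deg p$. The key input is the structure \eqref{eqn:normalform} of the normal form with the degree bound $\deg(r_\alpha)+|\alpha|\le d$ for $\alpha\in\calN_y$, which is what lets the estimate $\|y\|\le A(1+\|x\|)$ be applied exactly $|\alpha|$ times without overshooting the total degree $d$. I should make sure to justify that $\deg(r_\alpha)+|\alpha|\le\deg(r)=d$ for each term, which follows because grevlex is a graded ordering so every monomial of $r$ has total degree $\le\deg(r)$. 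Everything else is routine manipulation of $\log^+$ and polynomial growth bounds.
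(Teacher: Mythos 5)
Your proposal is correct and follows essentially the same route as the paper: both parts rest on the Noether-presentation estimate (\ref{eqn:yx}) for the two-sided comparison of $\log^+\|x\|$ with $\log^+\|z\|$, and on reducing $p$ to its degree-$d$ normal form and bounding it term by term using $\|y\|\leq A(1+\|x\|)$ on $V$. The only (cosmetic) difference is that you bound every term of the normal form at once by a constant times $(1+\|x\|)^d$, whereas the paper splits off the leading homogeneous part and normalizes by $\|x\|^d$; the substance is the same.
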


\begin{proof}
We need to show that the quantity $\log^+\|x\|-\log^+\|z\|$ is uniformly bounded from both sides.  The inequality $\log^+\|x\|\leq \log^+\|z\|$ is obvious, which gives an upper bound of zero.  For the lower bound, we use the property $\|y\|\leq A(1+\|x\|)$  for some $A>0$ to estimate
\begin{eqnarray*}
\log^+\|z\| = \log^+\|(x,y)\| &\leq&  \log^+(\|x\| + \|y\|)  \\
 &\leq&   \log^+(\|x\|+ A(1+\|x\|)) \\ &\leq&   \log^+( \tfrac{A}{A+1} +\|x\|) +\log^+(A+1)  \leq \log^+\|x\|+C,
\end{eqnarray*}
where we choose $C>0$ sufficiently large (depending on $A$) so that the last inequality holds for all $x\in\CC^M$.  This gives a lower bound of $-C$.

To prove the second item, let $d:=\deg_V(p)$.  Reduce $p$ to its normal form of degree $d$, which we will also denote by $p$, and let $\hat p$ denote the leading homogeneous part.  We have  
$$
p(x,y) = \hat p(x,y) + r(x,y) =  \sum_{\substack{|\beta|\leq d\\ \beta\in\calN_y}} h_{\beta}(x)y^{\beta} + r(x,y)
$$
with $h_{\beta}(x)$ homogeneous of degree $d-|\beta|$ for each $\beta$ and $\deg r(x,y)< d$.  We calculate that 
\begin{eqnarray*}
\frac{|h_{\beta}(x)y^{\beta}|}{\|x\|^d} \leq \frac{|h_{\beta}(x)|}{\|x\|^{d-|\beta|}}\left(\frac{\|y\|}{\|x\|}\right)^{|\beta|} 
&\leq&  \left| h_{\beta}\bigl(\tfrac{x}{\|x\|}\bigr) \right|\cdot C^{|\beta|}\left|1+\tfrac{1}{\|x\|}\right|^{|\beta|}  \\
&\leq& \|h_{\beta}\|_{B }(2C)^{|\beta|} \quad(\hbox{for   } \|x\|>1)  \\
&=:& C_{\beta},
\end{eqnarray*}
where in the second inequality $\|\cdot\|_B$ denotes the sup norm on $B$, the closed unit ball in $\CC^m$.  

If $(\alpha,\beta)\in\NN^m\times\NN^{N-m}$ are multi-indices with $|\alpha|+|\beta|<d$, then
$$
|x^{\alpha}y^{\beta}|\leq \|x\|^{|\alpha|}\|y\|^{|\beta|} \leq \|x\|^{|\alpha|}(1+\|x\|)^{|\beta|}
$$
so that 
$$\frac{|x^{\alpha}y^{\beta}|}{\|x\|^{d}} \leq \|x\|^{d-|\alpha|} + \|x\|^{d-|\alpha|-|\beta|} \longrightarrow 0 \ \hbox{ as } \|x\|\to\infty.$$
Hence $|r(x,y)|/\|x\|^d\to 0$ as $\|x\|\to\infty$.  Putting the above calculations together,  
$$
\frac{|p(x,y)|}{\|x\|^d} \leq \tilde C \hbox{ for sufficiently large } \|x\|>1
$$
where $\tilde C=1+\sum_{\beta}C_{\beta}$.  Thus $\frac{1}{d}\log|p(x,y)| \leq \log\|x\| + \tilde C/d$ for sufficiently large $\|x\|$.  It follows easily that $\frac{1}{d}\log|p(x,y)| \in\calL(V)$.
\end{proof}

The integer $\deg_V(p)$ is not the smallest value of $d$ permitting an inequality of the form $\frac{1}{d}\log|p(z)|\leq \log^+\|z\|+A$ for some $A\in\RR$.  The optimal bound may be a rational number; we will see this later, when studying the notion of \emph{Lelong degree}.

\begin{remark}\rm
 In \cite{zeriahi:pluricomplex} a complex space $X$ of dimension $m$ is said to be \emph{parabolic} if it admits a continuous psh exhaustion function $g:X\to[-\infty,\infty)$ for which $(dd^cg)^m=0$ off some compact subset of $X$.  The Lelong class $\calL(X,g)$ is then defined by 
$$
\calL(X,g) = \{ u \hbox{ {weakly} psh on }  X: \exists C \hbox{ such that } u(z)\leq g^+(z) + C, \forall z\in V \},
$$
where $g^+(z)=\max\{g(z),0\}$, and the notion of weakly psh is as in Remark \ref{rem:21}.  We define $\calL^+(X,g)$ similarly with the added condition $u(z)\geq g^+(z)+c$.  

 Proposition \ref{prop:L} shows that  any affine variety $V$ is a parabolic space with parabolic potential $\log\|x\|$, where the coordinates $(x,y)$ are a Noether presentation for $V$.  The classes $\calL(V)$, $\calL^+(V)$ correspond to the classes $\calL(V,\log\|x\|)$,  $\calL^+(V,\log\|x\|)$ in the notation of \cite{zeriahi:pluricomplex}.
\end{remark}

\section{Comparison theorems and Monge-Amp\`ere mass} \label{sec:comparison}

We will establish some comparison theorems for the complex Monge-Amp\`{e}re operator on an affine variety. The locally bounded theory of Bedford and Taylor \cite{bedfordtaylor:dirichlet} in $\CC^N$  is sufficient for this section.  In particular, we use the fact that any locally bounded psh function in $\CC^N$ can be approximated by a decreasing sequence of smooth psh functions; and if $T$ is a positive closed $(N-k,N-k)$-current, $u_1,\ldots,u_k$ are psh functions, and for each $j$ we have a monotone convergent sequence of psh functions, $u_{j}^{(n)}\nearrow u_j$ (or $u_{j}^{(n)}\searrow u_j$), then
$$
dd^cu_1^{(n)}\wedge\cdots\wedge dd^cu_k^{(n)} \wedge T \to dd^cu_1\wedge\cdots\wedge dd^cu_k\wedge T \quad \hbox{weak-}^* \hbox{ as } j\to\infty.
$$
 

\begin{theorem}
Let $\Omega\subset\CC^N$ and let $u,v$ be locally bounded psh functions on $\Omega$.  Let $T$ be a closed positive current of bidegree $(N-j,N-j)$ on $\Omega$ for some positive integer $j\leq N$.   If the set $A:=\{u<v\}\cap \supp(T)$ is a relatively compact subset of $\Omega$ then 
$$
\int_{\{u<v\}} (dd^cv)^j\wedge T \leq \int_{ \{u<v\} } (dd^cu)^j\wedge T.
$$
\end{theorem}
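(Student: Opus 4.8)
The plan is to reduce this to the classical Bedford--Taylor comparison theorem on a domain in $\CC^N$ by a standard truncation-and-approximation argument, exploiting that we only need the locally bounded theory. First I would reduce to the case where $u,v$ are continuous (even smooth): by hypothesis $u,v$ are locally bounded psh, so on a neighborhood of the compact set $A$ we may approximate them by decreasing sequences of smooth psh functions and use the weak-$*$ convergence of mixed Monge--Amp\`ere currents quoted above; one must check that the sets $\{u<v\}$ behave well under this approximation, which is the usual delicate point (passing to $\{u<v-\varepsilon\}$ and $\{u<v+\varepsilon\}$ and letting $\varepsilon\to 0^+$).

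The core step is the integration-by-parts identity. Since $A=\{u<v\}\cap\supp(T)$ is relatively compact in $\Omega$, choose a smooth cutoff $\chi$ that is $\equiv 1$ on a neighborhood of $A$ and compactly supported in $\Omega$. Set $w:=\max(u,v)$, which is psh and locally bounded, agrees with $v$ on $\{u<v\}$ and with $u$ off its closure, and satisfies $w\equiv u$ near $\partial\{u<v\}\cap\supp(T)$. The strategy is to show
$$
\int_{\{u<v\}} \bigl((dd^cv)^j - (dd^cu)^j\bigr)\wedge T \;=\; \int_{\supp(T)} \bigl((dd^cw)^j - (dd^cu)^j\bigr)\wedge \chi\, T
$$
and then to expand the right-hand side telescopically as $(dd^cw)^j-(dd^cu)^j = \sum_{k} dd^c(w-u)\wedge (dd^cw)^{k}\wedge(dd^cu)^{j-1-k}$. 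Each summand, paired against $\chi T$, is integrated by parts twice to move the $dd^c$ onto $\chi$; since $\chi$ is constant near $A$ and $w-u=0$ where $\chi$ is not locally constant intersected with $\supp(T)$, the boundary/cutoff terms vanish, forcing the whole expression to be $\le 0$ (here one uses that $w-u\ge 0$ and that the remaining currents $(dd^cw)^k\wedge(dd^cu)^{j-1-k}\wedge T$ are positive, so $dd^c$ of a nonnegative function against a positive current integrates against $dd^c\chi$ with a controlled sign). This is essentially the proof of Theorem~4.1 in Bedford--Taylor \cite{bedfordtaylor:dirichlet}, adapted verbatim since all the relevant currents live in the locally bounded category.

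The main obstacle I expect is the bookkeeping around the boundary of $\{u<v\}$: the set $\{u<v\}$ need not be open-with-nice-boundary, and $(dd^cv)^j\wedge T$ restricted to it is being compared with a quantity defined via the global current $(dd^cw)^j\wedge T$, so one must be careful that no mass of these measures is concentrated on $\partial\{u<v\}\cap\supp(T)$ in a way that spoils the identity. The clean way around this is the $\varepsilon$-shift: prove the inequality with $\{u<v-\varepsilon\}$, whose closure is contained in $\{u<v\}$ hence still relatively compact in $\Omega$, using the cutoff argument there where everything is interior, and then let $\varepsilon\searrow 0$ using monotone convergence of the measures $(dd^cv)^j\wedge T$ and $(dd^cu)^j\wedge T$ over the increasing family of sets $\{u<v-\varepsilon\}$, together with the fact that $\bigcup_{\varepsilon>0}\{u<v-\varepsilon\}=\{u<v\}$. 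A symmetric remark handles whether we want $\{u < v\}$ or its closure; since the statement as written uses $\{u<v\}$ on both sides, the $\varepsilon$-exhaustion from inside is exactly what is needed and no outer approximation is required.
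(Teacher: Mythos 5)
Your overall strategy (max of the two functions, a cutoff, integration by parts, then an $\varepsilon$-limit) is the same Cegrell-style argument the paper uses, and your reduction to continuous functions and the telescoping of $(dd^cw)^j-(dd^cu)^j$ are fine. But the pivotal identity you propose, $\int_{\{u<v\}}\bigl((dd^cv)^j-(dd^cu)^j\bigr)\wedge T=\int\bigl((dd^cw)^j-(dd^cu)^j\bigr)\wedge\chi T$ with $w=\max(u,v)$ and $\chi\equiv 1$ near $\overline{A}$, is false in general: after your integration by parts the right-hand side is $0$, while the left-hand side is typically strictly negative, the discrepancy being the mass that $(dd^c\max(u,v))^j\wedge T$ deposits on $\partial\{u<v\}\cap\supp(T)$, where $w=u$ pointwise but the Monge--Amp\`ere measures need not agree. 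Concretely, take $N=j=1$, $T=1$, $u=\max(\log|z|,-10)$, $v\equiv -1$: then $\{u<v\}=\{|z|<e^{-1}\}$, the left side equals $0-2\pi=-2\pi$, while $w=\max(\log|z|,-1)$ and both $dd^cw$ and $dd^cu$ have total mass $2\pi$ under any such $\chi$, so the right side is $0$. To push your route through you would need the nontrivial inequality $(dd^c\max(u,v))^j\wedge T\ \ge\ \mathbf{1}_{\{u\ge v\}}(dd^cu)^j\wedge T+\mathbf{1}_{\{u<v\}}(dd^cv)^j\wedge T$ on the coincidence set, which you neither state nor prove. Nor does the $\varepsilon$-shift, as you describe it, repair this: with $\chi\equiv 1$ on a neighborhood of all of $A$ and merely compactly supported in $\Omega$, integration by parts only bounds $\int_{\{u<v-\varepsilon\}}(dd^cv)^j\wedge T$ by $\int\chi\,(dd^cu)^j\wedge T$, an integral over a region possibly much larger than $\{u<v\}$; and if instead you aim at the inequality on $\{u<v-\varepsilon\}$ with the same set on both sides, the identical boundary-mass problem simply reappears on $\{u=v-\varepsilon\}$.

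The paper's arrangement avoids every restriction-to-boundary issue and is the natural repair of your sketch. Work inside the open set $\Omega_1=\{u<v\}$ and use the truncated maximum $v_\varepsilon=\max\{v-\varepsilon,u\}$: by continuity and the hypothesis on $A$, the set $\{v_\varepsilon>u\}\cap\supp(T)=\{u<v-\varepsilon\}\cap\supp(T)$ is relatively compact in $\Omega_1$, so the cutoff $\varphi$ can be chosen compactly supported \emph{inside} $\Omega_1$ with $\varphi\equiv 1$ near that set (this is exactly the placement your $\chi$ cannot have, since $\overline{A}$ meets $\{u=v\}$). Integration by parts then gives the equality of total masses $\int_{\Omega_1}dd^cv_\varepsilon\wedge T=\int_{\Omega_1}dd^cu\wedge T$ (the paper does $j=1$ and inducts; your telescoping would do the same job), and finally $v_\varepsilon\nearrow v$ on $\Omega_1$ together with Bedford--Taylor monotone convergence against test functions $0\le\psi\le 1$ supported in $\Omega_1$ yields $\int_{\Omega_1}(dd^cv)^j\wedge T\le\int_{\Omega_1}(dd^cu)^j\wedge T$. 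So the limit is taken through the monotone family $v_\varepsilon$, not by exhausting $\{u<v\}$ by the sets $\{u<v-\varepsilon\}$; with your current cutoff placement and the untruncated $\max(u,v)$, the argument does not close.
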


\begin{proof}
We will verify the theorem for $u,v$ continuous, adapting an argument of Cegrell (see \cite{cegrell:capacities} or \cite{klimek:pluripotential}, Section 3.7).  The hypotheses can be weakened to $u,v$ locally bounded by a standard argument using decreasing sequences of continuous psh approximants to $u,v$ on an open neighborhood of the closure of $A$.  

  Let $\Omega_1:=\Omega\cap\{u<v\}$,  and let $T$ be a positive closed $(N-1,N-1)$-current.   For $\epsilon>0$, define $v_{\epsilon}:=\max\{v-\epsilon,u\}$.  By continuity, $u=v$ on $\partial\Omega_1\cap\supp(T)$, and by hypothesis, the closure of this set is in $\Omega$.  Thus $\{u<v-\epsilon\}\cap\supp(T)$ is a relatively compact subset of $\Omega_1$.  We claim that 
\begin{equation}\label{eqn:3a}
\int_{\Omega_1} dd^cv_{\epsilon}\wedge T = \int_{\Omega_1} dd^cu\wedge T.
\end{equation}
Given $\epsilon>0$, let $\varphi$ be a smooth, compactly supported non-negative function in $\Omega_1$ such that $\varphi\equiv 1$ in a neighborhood of the closure of the set $\{v_{\epsilon}>u\}\cap\supp(T)$.  Then $dd^c\varphi\equiv 0$ on this set, so that $(\supp(dd^c\varphi)\cap\supp(T))\subset \{v_{\epsilon}=u\}$, and
$$
\int_{\Omega_1} \varphi dd^cv_{\epsilon}\wedge T  = \int_{\Omega_1} v_{\epsilon} dd^c\varphi\wedge  T = \int_{\Omega_1} udd^c\varphi\wedge T.
$$
Consequently (\ref{eqn:3a}) holds since $\varphi$ was arbitrary.

Now take a smooth, compactly supported $\psi$  on $\Omega_1$, with $0\leq\psi\leq 1$.  Then $v_{\epsilon}\nearrow v$ on $\Omega_1$, so that
$$
\int_{\Omega_1}\psi dd^cv\wedge T =\lim_{\epsilon\to 0} \int_{\Omega_1}\psi dd^cv_{\epsilon}\wedge T \leq \lim_{\epsilon\to 0} \int_{\Omega_1} dd^cv_{\epsilon}\wedge T
= \int_{\Omega_1} dd^cu\wedge T,
$$
using (\ref{eqn:3a}).  Since $\psi$ was arbitrary, the theorem follows when $j=1$.

An easy induction gives the theorem for higher powers of $j$.
\end{proof}


Recall that if  $M\subset\CC^N$ is a manifold of dimension $k$, then $[M]$ denotes its current of integration, i.e., the $(2N-k)$-current that acts on a test $k$-form $\varphi$ by
$$
\langle [M],\varphi\rangle := \int_M \varphi.
$$
A theorem of Lelong says that the current of integration $[V]$ over an analytic variety $V\subset\CC^N$ of pure dimension $m$  is a closed positive current of bidegree $(N-m,N-m)$.  As a consequence, we have the following.

\begin{corollary} \label{cor:32}
Let $V\subset\CC^N$ be an affine variety of dimension $m$ and let $j\in\{1,\ldots, m\}$.  Suppose $u,v$ are locally bounded psh functions on $V$ and $S$ is a closed positive current of bidegree $(m-j,m-j)$.   If $A:=V\cap \{u<v\}\cap\supp(S)$ is a bounded subset of $V$, then 
$$
\int_{A} (dd^cv)^j\wedge S \leq \int_{A} (dd^cu)^j\wedge S.
$$
\end{corollary}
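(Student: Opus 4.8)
The plan is to reduce Corollary \ref{cor:32} to the preceding comparison theorem, which is stated for an open subset $\Omega\subset\CC^N$ and a closed positive current $T$ of bidegree $(N-j,N-j)$ on $\Omega$. The natural candidate for $T$ is the current $T:=[V]\wedge S$, or rather its trivial extension across the singular locus $V_s$ of $V$: by Lelong's theorem $[V]$ is a closed positive current of bidegree $(N-m,N-m)$ on $\CC^N$, and on $V\setminus V_s$ (a complex manifold of dimension $m$) the wedge product $[V]\wedge S$ makes sense as a closed positive current of bidegree $(N-m,N-m)+(m-j,m-j)=(N-j,N-j)$, since $S$ has bidegree $(m-j,m-j)$. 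The functions $u,v$ are only defined on $V$, so I would first extend them to psh functions $\tilde u,\tilde v$ on a neighborhood $\Omega$ of (the relevant part of) $V$ in $\CC^N$; locally this is possible by the definition of psh function on $V$, and by a partition-of-unity/gluing argument using $\max$ one gets local extensions — but for the integral identity only the values on $V$ matter, so one can even work locally and patch the resulting integral statements.

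The key steps, in order, would be: (i) restrict attention to $V\setminus V_s$ and observe that there $\int_A (dd^c v)^j\wedge S = \int_{A\setminus V_s} (dd^c\tilde v)^j\wedge S = \int_{\{\tilde u<\tilde v\}} (dd^c\tilde v)^j\wedge([V]\wedge S)$, using that the wedge product of $dd^c$ of a locally bounded psh function with a closed positive current is computed the same way whether one views the psh function as living on $V$ or as the restriction of $\tilde v$ (Bedford–Taylor locality); (ii) check the hypothesis of the comparison theorem: $\{\tilde u<\tilde v\}\cap\supp([V]\wedge S)\subset \overline{A}$ is relatively compact in $\Omega$, which holds because $A=V\cap\{u<v\}\cap\supp(S)$ is bounded by assumption and $\supp([V]\wedge S)\subset V\cap\supp(S)$; (iii) apply the comparison theorem with $T=[V]\wedge S$ to get $\int (dd^c\tilde v)^j\wedge T\le \int(dd^c\tilde u)^j\wedge T$ over $\{\tilde u<\tilde v\}$; (iv) translate back: the left side equals $\int_A (dd^cv)^j\wedge S$ and the right side equals $\int_A (dd^cu)^j\wedge S$, again by locality, and the contribution of $V_s$ is zero since $V_s$ has dimension $<m$ and the currents in question put no mass on it (the trivial extension has no mass on $V_s$).

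The main obstacle I anticipate is making rigorous the identification
$$
\int_{V\cap\{u<v\}} (dd^c v)^j\wedge S \;=\; \int_{\CC^N\cap\{\tilde u<\tilde v\}} (dd^c\tilde v)^j\wedge\bigl([V]\wedge S\bigr),
$$
i.e. justifying that the complex Monge–Ampère–type product computed intrinsically on the manifold $V\setminus V_s$ agrees with the ambient product against $[V]$, together with the claim that these currents charge no mass on $V_s$. The first point is a locality statement for Bedford–Taylor products (both sides are computed by the same local formula in coordinate charts on $V_{\mathrm{reg}}$, where $V$ is a graph over $\CC^m$ by the Noether presentation, so $[V]$ is just the push-forward of Lebesgue-type integration and $dd^c\tilde v|_V = dd^c(v)$); the second is the standard fact that a locally bounded psh function's Monge–Ampère mass does not concentrate on a lower-dimensional analytic set, so the trivial extension across $V_s$ introduces no error. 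Once these are in place — or simply taken as known from the Bedford–Taylor / Lelong theory cited at the start of the section — the corollary is immediate from the theorem. One should also remark that the finiteness of all integrals involved follows from the relative compactness of $A$ together with the locally finite mass of $[V]$ and of Monge–Ampère products of locally bounded psh functions.
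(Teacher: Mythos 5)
Your proposal takes the same route as the paper: extend $u,v$ to locally bounded psh functions on an ambient open set (which is how psh functions on $V$ are defined here), observe that $V\cap\{u<v\}\cap\supp(S)$ is relatively compact in that set, and apply the preceding comparison theorem with $T=[V]\wedge S$. The locality and singular-locus points you flag are precisely what the paper's two-line proof takes as known from Bedford--Taylor/Lelong theory, so your argument is correct and essentially identical.
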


\begin{proof}
Note that by definition, $u,v$ are restrictions to $V$ of locally bounded psh functions on an open set $\Omega\subset\CC^N$ (cf. Remark \ref{rem:21}).  For these extended functions, $\{u<v\}\cap V$ is relatively compact in $\Omega$, so we may apply the previous theorem with $T=[V]\wedge S$.
\end{proof}

\begin{theorem}\label{thm:33}
Suppose that $V\subset\CC^N$ is an affine variety of dimension $m$, and $u,v$ are locally bounded psh functions on $V$.  Suppose $u,v$ are bounded from below outside a bounded subset of $V$ and $v(z)=u(z) + o(u(z))$ as $\|z\|\to\infty$.  Let $j\in\{1,\ldots,m\}$. Then for any closed positive $(m-j,m-j)$-current $S$ with unbounded support,
$$
\int_V (dd^cv)^j\wedge S   \leq \int_V (dd^cu)^j\wedge S.
$$
\end{theorem}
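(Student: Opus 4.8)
The plan is to reduce the unbounded statement of Theorem~\ref{thm:33} to the relatively compact comparison result of Corollary~\ref{cor:32} by an exhaustion argument together with a perturbation of $v$. Fix $j\in\{1,\ldots,m\}$ and a closed positive $(m-j,m-j)$-current $S$. The hypothesis $v=u+o(u)$ as $\|z\|\to\infty$, combined with the fact that $u,v$ are bounded below outside a bounded set, is exactly what one needs to produce, for each $\delta>0$, a comparison domain: I would show that the set $\{u<(1-\delta)v - C_\delta\}\cap V$ (or symmetrically, comparing $(1-\delta)v$ against $u$ shifted by a constant) is \emph{bounded} in $V$. Indeed, where $\|z\|$ is large, $u(z)\to\pm\infty$ or stays bounded; on the region where $u$ is large and positive, $v/u\to 1$ forces $(1-\delta)v - u \to -\infty$, so that set is contained in a ball; on the region where $u$ stays bounded below, the inequality $u < (1-\delta)v - C_\delta$ with $C_\delta$ large and $v$ also bounded below cannot occur. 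This is the step I expect to require the most care — getting the two-sided control right, handling the case where $u$ is bounded but not bounded above, and making sure the perturbed function $(1-\delta)v-C_\delta$ is still a locally bounded psh function on $V$ with unbounded support intersection so that Corollary~\ref{cor:32} applies.

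Granting that, I would apply Corollary~\ref{cor:32} on $A_\delta := V\cap\{u<(1-\delta)v-C_\delta\}\cap\supp(S)$ with the two psh functions $u$ and $w_\delta:=(1-\delta)v-C_\delta$, obtaining
$$
\int_{A_\delta}(dd^c w_\delta)^j\wedge S \le \int_{A_\delta}(dd^c u)^j\wedge S \le \int_V (dd^c u)^j\wedge S,
$$
where the last inequality uses positivity of $(dd^cu)^j\wedge S$. Since $dd^c w_\delta = (1-\delta)\,dd^c v$, the left side equals $(1-\delta)^j\int_{A_\delta}(dd^c v)^j\wedge S$. Then I would let $\delta\to 0$: as $\delta$ shrinks (with $C_\delta$ chosen appropriately, e.g.\ $C_\delta\to\infty$ slowly) the sets $A_\delta$ exhaust $V\cap\supp(S)$ up to the region where $u$ and $v$ nearly coincide, and monotone/dominated convergence of the positive measure $(dd^cv)^j\wedge S$ on the increasing family $A_\delta$ gives $\liminf_\delta \int_{A_\delta}(dd^cv)^j\wedge S \ge \int_V (dd^cv)^j\wedge S$. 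Combining, $(1-\delta)^j$ times something converging to $\int_V(dd^cv)^j\wedge S$ is bounded above by $\int_V(dd^cu)^j\wedge S$, and passing to the limit yields the claim.

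An alternative, possibly cleaner, route avoids the constant $C_\delta$: compare $u$ directly with $(1-\delta)v$ on $\{u<(1-\delta)v\}$ after first arranging (by adding a common constant, which does not change $dd^c$) that both $u,v$ are $\ge 1$ outside a large ball $B_R$. Then on $V\setminus B_R$ the condition $u<(1-\delta)v$ forces $u/v<1-\delta$, i.e.\ $u-v < -\delta v \le -\delta$, contradicting $v=u+o(u)$ once $R$ is large enough (since $o(u)/u\to 0$ but here $(u-v)/v$ would be bounded away from $0$); hence $\{u<(1-\delta)v\}\cap V \subset B_R$, which is bounded. This removes one moving part from the exhaustion. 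Either way, the technical heart is the boundedness of the comparison set, and I would want to state a short lemma isolating the implication ``$v=u+o(u)$ at infinity and $u,v$ bounded below off a bounded set $\implies \{u<(1-\delta)v\}$ (after normalization) is bounded'' before invoking Corollary~\ref{cor:32} and letting $\delta\to 0$.
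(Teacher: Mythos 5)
Your overall strategy is the paper's: perturb one of the functions multiplicatively, check that the resulting comparison set is bounded so that Corollary \ref{cor:32} applies, and then pass to the limit. Your boundedness lemma is also fine (after normalizing $u,v\geq 1$ off a ball, a point of $\{u<(1-\delta)v\}$ far out would give $(v-u)/u>\delta$, contradicting $v=u+o(u)$). The genuine gap is the limiting step. Because you subtract the large constant from the function on the \emph{large} side of the defining inequality, your sets $A_\delta=\{u<(1-\delta)v-C_\delta\}$ never exhaust $V\cap\supp(S)$: wherever $v\geq 0$ (in particular outside the ball where you normalized $v\geq 1$) one has $(1-\delta)v-C_\delta\leq v$, so $A_\delta\subset\{u<v\}$, and sending $C_\delta\to\infty$ only shrinks the sets further. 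Hence the claimed convergence $\liminf_\delta\int_{A_\delta}(dd^cv)^j\wedge S\geq\int_V(dd^cv)^j\wedge S$ is false. A concrete counterexample: take $u\equiv v$ to be a nonnegative function of the class $\calL^+(V)$ (say $\log^+\|z\|$ restricted to $V$), $j=m$, $S\equiv 1$; then every $A_\delta$ is empty (also in your ``cleaner'' variant $\{u<(1-\delta)v\}=\{v<0\}=\emptyset$), so your chain of inequalities only yields $0\leq\int_V(dd^cu)^m$, while $\int_V(dd^cv)^m=\deg(V)(2\pi)^m\neq 0$. What your argument actually proves is $\int_{\{u<v\}}(dd^cv)^j\wedge S\leq\int_V(dd^cu)^j\wedge S$, which misses all the mass of $(dd^cv)^j\wedge S$ carried by $\{u\geq v\}$.

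The repair is to attach both the dilation and the additive constant to $u$, the function on the \emph{small} side of the inequality, which is exactly what the paper does: after arranging $u,v>0$ off a bounded set, put $A_{\epsilon,c}:=\{(1+\epsilon)u-c<v\}\cap V$. For fixed $\epsilon,c>0$ this is bounded, since $(1+\epsilon)u(z)-c>u(z)+o(u(z))=v(z)$ for $\|z\|$ large; Corollary \ref{cor:32} gives $\int_{A_{\epsilon,c}}(dd^cv)^j\wedge S\leq(1+\epsilon)^j\int_V(dd^cu)^j\wedge S$. Now the additive constant does the exhausting: as $c\to+\infty$ the sets $A_{\epsilon,c}$ increase to all of $V$ (at every fixed point $(1+\epsilon)u-c<v$ eventually holds, $u$ and $v$ being finite), so monotone convergence upgrades the left side to $\int_V(dd^cv)^j\wedge S$, and letting $\epsilon\to 0$ finishes. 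Keeping your factor $(1-\delta)$ on $v$ is harmless (it puts $(1-\delta)^j$ on the left instead of $(1+\epsilon)^j$ on the right), but then the large constant must be subtracted from $u$, i.e., you should work with $\{u-c<(1-\delta)v\}$ so that increasing $c$ enlarges the comparison sets; subtracting it from $(1-\delta)v$, as you propose, helps boundedness but destroys the exhaustion.
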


\begin{proof} 
By adding a positive constant, we may assume without loss of generality that $u,v>0$ outside a bounded subset of $V$.  
   Let $\epsilon,c>0$.  Then $(1+ \epsilon) u(z) -c >u(z)+o(u(z))$  as $\|z\|\to\infty$, which implies that $(1+\epsilon) u(z)-c>v(z)$ as $\|z\|\to\infty$.  Thus the set $A_{\epsilon,c}:=\{(1+\epsilon)u-c<v\}\cap V$ is bounded and we may use Corollary \ref{cor:32} to obtain the inequality
$$
\int_{A_{\epsilon,c}} (dd^cv)^j\wedge S   \leq  (1+\epsilon)^j \int_V (dd^cu)^j\wedge S.
$$
Letting $c\to+\infty$, $$\int_V  (dd^cv)^j\wedge S   \leq  (1+\epsilon)^j \int_V (dd^cu)^j\wedge S.$$   
Finally, let $\epsilon\to 0$.
\end{proof}

We will also need the following corollary.
\begin{corollary}\label{cor:L2}
Suppose $u-v$ is $O(1)$ and $S$ is a positive closed $(m-1,m-1)$-current with the property that $\int_V dd^cw\wedge S$ is finite for some $w\in\calL^+(V)$.  Then 
$$
\int_V dd^cu\wedge S = \int_V dd^cv\wedge S.
$$
\end{corollary}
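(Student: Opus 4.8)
The plan is to reduce Corollary~\ref{cor:L2} to Theorem~\ref{thm:33} by symmetrizing the two-sided estimate $u - v = O(1)$. Since $|u - v| \le M$ for some constant $M$, both $u$ and $v$ differ from $w$ only by a function that is bounded below outside a bounded set (indeed, $u, v \in \calL(V)$ are bounded above by $\log^+\|z\| + C$, and comparing with $w \in \calL^+(V)$, which is bounded below by $\log^+\|z\| + c$, shows that $u$ and $v$ are themselves bounded below outside a bounded set); this puts us in the setting where Theorem~\ref{thm:33} applies. First I would verify the asymptotic hypothesis of Theorem~\ref{thm:33}: we need $v(z) = u(z) + o(u(z))$ as $\|z\| \to \infty$. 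This follows because $|v(z) - u(z)| \le M$ while $u(z) \to +\infty$ (after adding a constant to make $u > 0$ outside a bounded set, using that $u \ge \log^+\|z\| + (\text{const})$ up to the $O(1)$ comparison with $w$), so the bounded difference is indeed $o(u(z))$.

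Next, I would apply Theorem~\ref{thm:33} twice with $j = 1$ and the given current $S$: once with the pair $(u, v)$ in the roles of $(u, v)$ to get
$$
\int_V dd^c v \wedge S \le \int_V dd^c u \wedge S,
$$
and once with the roles reversed, i.e. taking $(v, u)$ in place of $(u, v)$, which is legitimate since the hypotheses of Theorem~\ref{thm:33} are symmetric in $u$ and $v$ up to the $o(\cdot)$ relation (and $u = v + o(v)$ holds by the same bounded-difference argument), to get the reverse inequality
$$
\int_V dd^c u \wedge S \le \int_V dd^c v \wedge S.
$$
Combining the two gives equality. The finiteness hypothesis $\int_V dd^c w \wedge S < \infty$ for some $w \in \calL^+(V)$ is what guarantees these integrals are finite rather than both being $+\infty$: applying Theorem~\ref{thm:33} to the pair $(w, u)$ (noting $u - w = O(1)$ as well, since both are in the Lelong class sandwiched between $\log^+\|z\|$ up to constants) bounds $\int_V dd^c u \wedge S$ by $\int_V dd^c w \wedge S < \infty$, and similarly for $v$.

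The main obstacle is bookkeeping rather than anything deep: one must be careful that all the functions involved ($u$, $v$, $w$, and possibly $\log^+\|x\|$ from a Noether presentation) pairwise differ by $O(1)$ or at least satisfy the $o(\cdot)$ asymptotic, so that Theorem~\ref{thm:33} is genuinely applicable in each invocation, and that adding constants to arrange positivity outside a bounded set does not disturb the currents $dd^c u$, $dd^c v$ (it does not, since $dd^c$ kills constants). I would also double-check that ``bounded from below outside a bounded subset of $V$'' is satisfied: since $w \in \calL^+(V)$ has $w \ge \log^+\|z\| + c \ge c$ everywhere, and $u, v$ differ from $w$ by $O(1)$, both are bounded below globally, which is stronger than needed. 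With these checks in place the proof is a short two-line deduction.
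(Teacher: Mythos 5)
There is a genuine gap: you have imported hypotheses that Corollary \ref{cor:L2} does not contain. The corollary assumes only that $u-v=O(1)$ and that \emph{some} $w\in\calL^+(V)$ has $\int_V dd^cw\wedge S<\infty$; it does not assume $u,v\in\calL(V)$, nor that $u,v$ are comparable to $w$ (your claim ``$u-w=O(1)$ as well, since both are in the Lelong class'' has no basis in the statement), nor that $u\to+\infty$, nor that $S$ has unbounded support. All of these are needed to invoke Theorem \ref{thm:33} the way you do, and none of them can be derived from $u-v=O(1)$ alone. This matters for the paper's actual uses of the corollary: in Proposition \ref{prop:moving} it is applied to $u=\max\{\log|t|,\log\epsilon\}$, $v=\max\{\log|t|,\log\delta\}$, and in Proposition \ref{prop:balayage} to $\log^+|e^{-M}p|$ and $\log^+|p|$ --- functions which stay bounded along unbounded parts of the variety, are not bounded below by $\log^+\|z\|+c$, and are not within $O(1)$ of any $w\in\calL^+(V)$. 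So the asymptotic hypothesis $v=u+o(u)$ of Theorem \ref{thm:33} simply fails there, and your route proves (a special case of) Corollary \ref{cor:L}, which carries exactly the extra hypotheses you assumed, rather than Corollary \ref{cor:L2}.

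The missing idea is the perturbation by $\epsilon w$. Set $u_{\epsilon,c}:=u+\epsilon w-c$. If $v-u\leq M$, then on $\{u_{\epsilon,c}<v\}$ one has $\epsilon w<v-u+c\leq M+c$, and since $w\geq\log^+\|z\|+c'$ this set is bounded --- with no growth assumptions on $u,v$ themselves. Corollary \ref{cor:32} (with $j=1$) then gives
$$
\int_{\{u_{\epsilon,c}<v\}} dd^cv\wedge S \;\leq\; \int_V dd^cu\wedge S \;+\; \epsilon\int_V dd^cw\wedge S,
$$
and letting $c\to+\infty$ (the sets exhaust $V$) and then $\epsilon\to 0$ yields $\int_V dd^cv\wedge S\leq\int_V dd^cu\wedge S$; swapping $u$ and $v$ gives equality. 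Note also that the finiteness of $\int_V dd^cw\wedge S$ enters precisely to kill the error term $\epsilon\int_V dd^cw\wedge S$ in this limit, not (as in your sketch) to bound $\int_V dd^cu\wedge S$ by comparison with $w$.
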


\begin{proof}
Take $w\in\calL^+(V)$ as hypothesised, and let $\epsilon,c>0$.  Define $u_{\epsilon,c}:=u+\epsilon w-c$.  Then the set $A_{\epsilon,c}:=\{u_{\epsilon,c}<v\}$ is bounded, so that
$$
\int_{u_{\epsilon,c}} dd^cv\wedge S \leq \int_V dd^cu_{\epsilon,c}\wedge S = \int_V dd^cu\wedge S + \epsilon\int_V dd^cw\wedge S.
$$
As before, let $c\to+\infty$ and  $\epsilon\to 0$ to obtain $\int_V dd^cv\wedge S \leq \int_Vdd^cu\wedge S$.  

The reverse inequality is obtained by the same argument with the roles of $u$ and $v$ swapped.
\end{proof}

\begin{corollary}\label{cor:L}
Let $u,v$ be psh on $V$, bounded from below, $u-v=O(1)$, and both functions go to infinity as $|z|\to\infty$.  Let $S$ be a positive closed $(m-j,m-j)$-current with unbounded support.   Then 
\begin{equation}\label{eqn:S}
\int_V (dd^cu)^j\wedge S = \int_V (dd^cv)^j\wedge S.
\end{equation}
In particular,  
 \begin{equation}\label{eqn:uv} \displaystyle \int_V (dd^cu)^m = \int_V (dd^cv)^m. \end{equation}
If the value of (\ref{eqn:uv}) is a  nonzero constant, it is equal to  
\begin{equation}\label{eqn:uv2} \displaystyle\int (dd^cu)^j\wedge(dd^cv)^{m-j} \end{equation} for each 
$j\in\{0,\ldots,m\}$.  In particular, this is true for $u,v\in\calL^+(V)$.  
\end{corollary}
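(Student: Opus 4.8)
The plan is to bootstrap from the cases already established. First I would prove \eqref{eqn:S}: since $u-v = O(1)$ and both functions tend to infinity, the hypothesis $v = u + o(u)$ of Theorem \ref{thm:33} holds trivially (indeed $v - u = O(1) = o(u)$), and symmetrically $u = v + o(v)$. Applying Theorem \ref{thm:33} twice, once in each direction, with the given current $S$ of bidegree $(m-j,m-j)$ and unbounded support, yields both inequalities
$$
\int_V (dd^cv)^j\wedge S \le \int_V (dd^cu)^j\wedge S \quad\text{and}\quad \int_V (dd^cu)^j\wedge S \le \int_V (dd^cv)^j\wedge S,
$$
hence \eqref{eqn:S}. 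Taking $j = m$ and $S$ the trivial current (the constant function $1$, i.e.\ the current of integration over all of $V$, which has unbounded support) gives \eqref{eqn:uv} as the special case; one should note $(dd^cu)^m$ makes sense here because $u,v$ are locally bounded psh, so Bedford--Taylor theory applies.

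For the last assertion I would argue by a telescoping comparison of the mixed masses $I_j := \int_V (dd^cu)^j\wedge(dd^cv)^{m-j}$ for $j = 0,1,\ldots,m$, where $I_0 = \int_V(dd^cv)^m$ and $I_m = \int_V(dd^cu)^m$ are equal by \eqref{eqn:uv}. The goal is to show all the $I_j$ coincide with this common value. The natural tool is \eqref{eqn:S} itself applied with the \emph{closed positive current} $S_j := (dd^cu)^{j}\wedge(dd^cv)^{m-1-j}$ of bidegree $(m-1,m-1)$: one gets $S_j$ has unbounded support because $V$ is unbounded and these Monge--Amp\`ere currents do not vanish identically near infinity (here one uses that the total mass is a nonzero constant, so the currents cannot be supported in a bounded set). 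Then \eqref{eqn:S} with $j$ replaced by $1$ and this $S_j$ gives $\int_V dd^cu\wedge S_j = \int_V dd^cv\wedge S_j$, i.e.\ $I_{j+1} = I_j$. Iterating from $j=0$ to $j=m-1$ shows $I_0 = I_1 = \cdots = I_m$, which is the claim. Finally, for $u,v\in\calL^+(V)$: these are bounded below by $\log^+\|z\| + c$, tend to infinity, and any two such functions differ by $O(1)$ (comparing each to $\log^+\|z\|$ via the defining inequalities), and $\log^+\|x\|\in\calL^+(V)$ is locally bounded, so all hypotheses are met.

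The main obstacle I anticipate is the justification that the intermediate currents $S_j = (dd^cu)^j\wedge(dd^cv)^{m-1-j}$ are well-defined, closed, positive, and have \emph{unbounded} support — the well-definedness and closedness/positivity are standard Bedford--Taylor facts for locally bounded psh functions, but the unboundedness of the support in the $j>0$ cases requires the nonvanishing of total mass, so the logical order matters: \eqref{eqn:uv} must be in hand (with nonzero value) before running the telescoping argument, which is exactly why the statement is conditioned on ``$\int_V(dd^cu)^m$ a nonzero constant.'' A secondary technical point is checking that applying \eqref{eqn:S} with current $S_j$ is legitimate, i.e.\ that the hypothesis ``$u-v=O(1)$, both tending to infinity'' transfers unchanged — it does, since that hypothesis is about $u,v$ alone and not about $S$.
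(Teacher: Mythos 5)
Your handling of (\ref{eqn:S}) and (\ref{eqn:uv}) is exactly the paper's argument: two applications of Theorem \ref{thm:33} (once in each direction) and then the choice $S\equiv 1$. For (\ref{eqn:uv2}) your telescoping through the mixed currents $S_j=(dd^cu)^j\wedge(dd^cv)^{m-1-j}$, using the $j=1$ case of (\ref{eqn:S}), is a reasonable variant of the paper's single application of Theorem \ref{thm:33} with $S=(dd^cv)^{m-j}$. However, both routes stand or fall on the one point you dispose of in a parenthesis, and your justification there is a non sequitur: ``the total mass is a nonzero constant, so the currents cannot be supported in a bounded set'' is false as a general implication. Nonzero finite mass does not force unbounded support --- for instance $(dd^c\log^+\|z\|)^m$ on $\CC^m$ has total mass $(2\pi)^m$ yet is supported on the bounded set $\{\|z\|=1\}$. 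So the unboundedness of $\supp S_j$, which is the only hypothesis of (\ref{eqn:S}) that is genuinely in doubt, is asserted but not proved; this is precisely the delicate step, and it is the only nontrivial content of the paper's proof beyond quoting Theorem \ref{thm:33}.

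The correct mechanism (and the one the paper uses, via a contradiction argument for $(dd^cv)^{m-1}$) is an integration by parts against a cutoff: suppose $\supp S_j\subset V_R:=V\cap\{\|z\|\le R\}$ for some $R$, and choose $\varphi\ge 0$ smooth with compact support, $\varphi\equiv 1$ on a neighborhood of $V_R$, so that $dd^c\varphi\wedge S_j=0$. Since $S_j$ is closed and $dd^cv\wedge S_j$ is supported in $\supp S_j$,
\begin{equation*}
I_j \ = \ \int_V dd^cv\wedge S_j \ = \ \int_V \varphi\, dd^cv\wedge S_j \ = \ \int_V v\, dd^c\varphi\wedge S_j \ = \ 0,
\end{equation*}
contradicting $I_j\neq 0$. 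With this lemma your induction runs correctly: the nonzero value of (\ref{eqn:uv}) gives $I_0\neq 0$, hence $\supp S_0$ is unbounded, hence $I_1=I_0\neq 0$ by (\ref{eqn:S}), hence $\supp S_1$ is unbounded, and so on. Without it (or an equivalent argument), the step ``$S_j$ has unbounded support'' is a gap, and the chain $I_0=I_1=\cdots=I_m$ is not established; the rest of your plan, including the observation that (\ref{eqn:uv}) must be in hand before telescoping and the verification for $u,v\in\calL^+(V)$, is sound.
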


\begin{proof}
Since $u,v$ go to infinity as $|z|\to\infty$,  $u-v$ is both $o(u(z))$ and $o(v(z))$. We need only show `$\geq$' in (\ref{eqn:S}), (\ref{eqn:uv}), and (\ref{eqn:uv}); the opposite inequality follows by reversing the roles of $u$ and $v$.  

The inequality for (\ref{eqn:S}) follows from Theorem \ref{thm:33}, and that for (\ref{eqn:uv}) follows as a corollary upon taking $S\equiv 1$.  For $j\in\{1,\ldots,m-1\}$ we also obtain the inequality for (\ref{eqn:uv2}) by taking $S=(dd^cv)^{m-j}$, as long as the support of this current is unbounded; we verify this next.


Suppose on the contrary that the support is bounded, i.e.,  
\begin{equation}\label{eqn:support} (dd^cv)^{m-1}=0 \ \hbox{ on }  \ V\setminus\{\|z\|\leq R\}  \ \hbox{ for some } R>0.\end{equation}
  By hypothesis, we may choose such an $R$ sufficiently large for which  $\int_{V_R} (dd^cu)^{m} \neq  0$, where  $V_R = \{z\in V: \|z\|\leq R\}$.  Let $\varphi\geq 0$ be a smooth, compactly supported function that is identically 1 on a neighborhood of $V_R$.  Then $\supp(dd^c\varphi)\subset V\setminus V_R$, so integration by parts yields 
\begin{eqnarray*}
0 \ \neq \  \int_ {V_R} (dd^cv )^m \ \leq \  \int_{V}\varphi(dd^cv)^{m} &=& \int_V v dd^c\varphi\wedge (dd^cv)^{m-1} \\
 &=& \int_{V\setminus V_R}  v dd^c\varphi\wedge (dd^cv)^{m-1} \  =  \  0, 
\end{eqnarray*}
using (\ref{eqn:support}).  This is a contradiction.
\end{proof}

As we shall see, $\int_V(dd^cv)^m$ is a positive constant for $v\in\calL^+(V)$.  The common value of the integrals (\ref{eqn:uv}), (\ref{eqn:uv2}) will be called the \emph{(Monge-Amp\`ere) mass of $\calL^+(V)$}.  Its exact value is obtained by choosing a convenient function in the class that can be integrated explicitly.  

 In Chapter 5 of \cite{klimek:pluripotential}, Klimek computes  $\int (dd^c(\log(1 + \|z\|)))^N$ in polar coordinates to get the mass of $\calL^+(\CC^N)$.  We give an alternative computation  using a `max' formula.
\begin{proposition}
\label{lem:43}
Let $0\leq k\leq m$ and suppose the functions $u_1,\ldots,u_{k+1}$ are pluriharmonic (i.e. $dd^cu_j=0$ for all $j$) on a domain $\Omega\subseteq\CC^m$.  Let $u(z):=\max_j u_j(z)$.  Then 
\begin{equation}\label{eqn:MAformula}
(dd^cu)^k = [S]\wedge d^c(u_1-u_2)\wedge d^c(u_2-u_3)\wedge\cdots\wedge d^c(u_{k}-u_{k+1}), 
\end{equation}
where $S=\{z\in\CC^m: u_1(z)=u_2(z)=\cdots=u_{k+1}(z)\}$, as long as $S$ is a smooth $(2m-k)$-dimensional manifold.
 \qed
\end{proposition}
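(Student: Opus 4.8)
The plan is to reduce everything to a local computation near the set $S$, exploiting the fact that away from $S$ the function $u=\max_j u_j$ coincides locally with one of the pluriharmonic functions $u_j$, so $(dd^cu)^k=0$ there. First I would observe that on the open set where (say) $u_j$ is the unique strict maximum, $u\equiv u_j$ in a neighborhood and hence $(dd^cu)^k=(dd^cu_j)^k=0$ for $k\geq 1$; thus both sides of \eqref{eqn:MAformula} are supported on $S$ (the right side manifestly, via $[S]$). So it suffices to verify the identity in a neighborhood of a regular point $z_0\in S$, where by hypothesis $S$ is a smooth manifold of real dimension $2m-k$, i.e. of real codimension $k$.

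The key step is to find good local coordinates near $z_0$. Since the $u_j$ are pluriharmonic, each $u_j=\Re h_j$ for a holomorphic function $h_j$; the differences $\phi_j:=u_j-u_{j+1}=\Re(h_j-h_{j+1})$ are pluriharmonic, and I would show that generically (when $S$ is smooth of the expected dimension) the holomorphic functions $h_1-h_2,\ldots,h_k-h_{k+1}$ have $\CC$-linearly independent differentials along $S$, so that after a holomorphic change of coordinates we may take $h_j-h_{j+1}=w_j$ for the first $k$ coordinates $w_1,\ldots,w_k$, with $S=\{\Re w_1=\cdots=\Re w_k=0\}$. In these coordinates $d^c(u_j-u_{j+1})=d^c\Re w_j=-\,d\,\Im w_j$ (up to a sign/normalization constant one must track carefully from the convention $d^c=i(\bar\partial-\partial)$), and $u=\max_j u_j$ becomes, modulo a common pluriharmonic term, a function of $\Re w_1,\ldots,\Re w_k$ alone — a "tent" function $\max$ of linear functionals in the real parts.

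Then the computation becomes the model case: for $u=\max\{a_1t_1+\cdots,\ldots\}$ built from real-linear functions of $t_i=\Re w_i$, one computes $(dd^cu)^k$ directly. Writing $u=\psi+\max_j\ell_j(t)$ with $\psi$ pluriharmonic and $\ell_j$ real-linear in $t=(t_1,\dots,t_k)$, the Monge--Ampère measure $(dd^c\max_j\ell_j)^k$ concentrates on the common-intersection locus $\{t=0\}$ and equals (Lebesgue measure on $\{t=0\}$ in the remaining $2m-k$ real variables) times the wedge $d^c(\ell_1-\ell_2)\wedge\cdots\wedge d^c(\ell_{k-1}-\ell_k)$ — this is essentially the statement that for a maximum of $k+1$ affine functions the real Monge--Ampère mass is supported where all are equal. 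Matching constants, this is exactly $[S]\wedge d^c(u_1-u_2)\wedge\cdots\wedge d^c(u_k-u_{k+1})$. One can justify the concentration either by the Bedford--Taylor convergence theorem (approximate $\max$ by smooth regularized maxima and pass to the limit, as licensed by the convergence result quoted at the start of Section~\ref{sec:comparison}) or by a direct integration-by-parts argument using that $dd^cu$ has no mass off $S$.

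The main obstacle I anticipate is the coordinate-straightening step and keeping the constants/orientations honest: one must check that the smoothness hypothesis on $S$ really does force the $k$ holomorphic differentials $d(h_j-h_{j+1})$ to be independent along $S$ (so that the $w_j$ can serve as coordinates), and then carefully reconcile the normalization in $dd^c=2i\partial\bar\partial$ and $d^c=i(\bar\partial-\partial)$ with the factor appearing when one writes $(dd^c(\max\text{ of real-linear}))^k$ as current-of-integration wedged with the $d^c$-forms. Everything else — the vanishing off $S$, the reduction to a regular point, the model Monge--Ampère computation — is routine pluripotential theory. Since the statement is given with a \qed and no proof, presumably the authors regard precisely this model computation as standard.
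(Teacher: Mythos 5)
The paper does not actually prove Proposition \ref{lem:43} internally: it is stated as a direct corollary of the main theorem of \cite{bedfordmau:complex}, the only remark being that the $dd^cu_j$ terms appearing in that theorem drop out because the $u_j$ are pluriharmonic. Your plan --- localize at $S$, straighten coordinates using holomorphic primitives $h_j$ of the $u_j$, and reduce to the model ``max of real parts of linear functions'' computation via Bedford--Taylor convergence --- is therefore a genuinely different, self-contained route, and its model-case core is sound (it is the same computation as Example \ref{ex:36} and (\ref{eqn:37}); with the paper's convention $d^c=i(\bar\partial-\partial)$ one has $d^c\Re w=d\Im w$, so only the sign bookkeeping needs care).

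There are, however, two concrete gaps. First, your vanishing argument off $S$ only treats points where the maximum is attained by a single $u_j$; it says nothing about points where a proper subcollection of two or more of the $u_j$ coincide at the maximum, which is the rest of the complement of $S$. This is fixable: near such a point $u$ differs from a pluriharmonic function by a convex piecewise-linear function of at most $r\le k-1$ functions $\Re g_i$ with $g_i$ holomorphic, and approximating by smooth psh functions of the same form, every term of $(dd^c\,\cdot\,)^k$ contains a repeated factor $dg_i$, hence $(dd^cu)^k=0$ there --- but this argument is not in your proposal as written. Second, and more seriously, the step you flag as needing verification is in fact false: smoothness of $S$ of dimension $2m-k$ does \emph{not} force the differentials $d(h_j-h_{j+1})$ to be independent along $S$. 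For example, in $\CC^2$ with $k=2$ take $u_1=\Re(2w_1+w_1^2)$, $u_2=\Re(w_1+w_1^2)$, $u_3=0$; then $S=\{\Re w_1=0,\ \Re(w_1+w_1^2)=0\}=\{w_1=0\}$ is a smooth complex line of the correct real dimension $2$, yet $d(u_1-u_2)$ and $d(u_2-u_3)$ both equal $dx_1$ at every point of $S$, so no straightening $h_1-h_2=w_1$, $h_2-h_3=w_2$ exists. (In this example both sides of (\ref{eqn:MAformula}) vanish --- the left because $u$ depends on $w_1$ alone, the right because the restriction of $d^c(u_1-u_2)\wedge d^c(u_2-u_3)$ to $S$ is zero --- so the formula is not contradicted, but your reduction never reaches such points.) Consequently your argument establishes (\ref{eqn:MAformula}) only on the open subset of $S$ where the differentials are independent; to finish you must additionally show that $(dd^cu)^k$ puts no mass on the closed degenerate locus in $S$, which requires an idea beyond coordinate straightening and is part of what the cited theorem of \cite{bedfordmau:complex} is supplying in the paper's version.
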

The formula is a direct corollary of the main theorem in \cite{bedfordmau:complex} 
(i.e. without the `$dd^c$' terms, which vanish in this case).  The pairing of $[S]\wedge d^c(u_1-u_2)\wedge\cdots\wedge d^c(u_k-u_{k+1})$ with a smooth $(N-k,N-k)$-current $\omega$ is given by  
$$\langle [S]\wedge d^c(u_1-u_2)\wedge\cdots\wedge d^c(u_k-u_k+1),\omega\rangle = \int_S d^c(u_1-u_2)\wedge\cdots\wedge d^c(u_k-u_{k-1})\wedge \omega,$$ which means $S$ must be oriented so that the integral is non-negative for $\omega\geq 0$.

\begin{example}\label{ex:36} \rm
For $z=x+iy\in\CC$, we have $\log^+|z|=\max\{\log|z|,0\}$, with $S=\{|z|=1\}$, so $dd^c\log^+|z|=[\{|z|=1\}]\wedge d^c\log|z|$.  We compute 
\begin{eqnarray*}
d^c\log|z| = d^c\left( \tfrac{1}{2}\log(x^2+y^2)\right) &=& \tfrac{1}{x^2+y^2}(xdy - ydx ) \\
 &=& \tfrac{1}{r^2}(r^2\cos^2\theta(d\theta)+r^2\sin^2\theta d\theta 
 \ = \   d\theta,
\end{eqnarray*}
where $z=re^{i\theta}$.   Hence $dd^c\log^+|z| = [\{|z|=1\}]\wedge d\theta$, angular measure on the unit circle.
\end{example}

More generally, consider the function in $\calL^+(\CC^N)$ given by  $$u(z)=\max\{\log|z_1|,\ldots,\log|z_N|,0\}.$$ Then $S$ is the torus $\{|z_1|=\cdots=|z_N|=1\},$  and we have
\begin{eqnarray} 
(dd^cu)^N 
&=& [S]\wedge d^c\log|z_1|\wedge\cdots\wedge d^c\log|z_N|  \nonumber  \\
&=& [S]\wedge d\theta_1\wedge\cdots\wedge d\theta_N,   \label{eqn:37}
\end{eqnarray}
where $z_j=r_je^{i\theta_j}$ for each $j$.  
Then $\int (dd^cu)^N = \int_S d\theta_1\wedge \cdots\wedge d\theta_N = (2\pi)^N$, showing that 
\begin{equation}\label{eqn:massL+}
\hbox{\it the mass of $\calL^+(\CC^N)$ is $(2\pi)^N$.}
\end{equation}

Using (\ref{eqn:massL+}) together with a Noether presentation, we can prove the following theorem.


\begin{theorem}\label{thm:L}
For an affine variety $V$ of dimension $m$,  the mass of $\calL^+(V)$ is $(\deg V)(2\pi)^m$.
\end{theorem}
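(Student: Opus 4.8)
The plan is to evaluate the Monge--Amp\`ere mass on a single, conveniently chosen function of $\calL^+(V)$ and to compute its mass by transporting the explicit calculation on $\CC^m$ (the one behind (\ref{eqn:massL+})) through the branched covering supplied by a Noether presentation. By (\ref{eqn:uv}) of Corollary~\ref{cor:L}, any two functions in $\calL^+(V)$ differ by $O(1)$ and both tend to $+\infty$, so $v\mapsto\int_V(dd^cv)^m$ is independent of $v\in\calL^+(V)$; hence it is enough to exhibit one function and integrate.

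First I would fix a Noether presentation $z=(x,y)$ of $V$ with $x\in\CC^m$, and let $P\colon V\to\CC^m$, $(x,y)\mapsto x$, be the associated projection, which is a finite branched covering of degree $d=\deg(V)$. On $\CC^m$ put $\tilde u(w):=\max\{\log|w_1|,\dots,\log|w_m|,0\}$; this is psh, differs from $\log^+\|w\|$ by a bounded amount, and by the ``max'' computation recorded just before (\ref{eqn:massL+}) one has $\int_{\CC^m}(dd^c\tilde u)^m=(2\pi)^m$. Now set $u:=\tilde u\circ P$ on $V$. Then $u$ is psh on $V$, being the composition of a psh function with a holomorphic map, and since $u=\log^+\|x\|+O(1)$ while $\log^+\|x\|\in\calL^+(V)$ by Proposition~\ref{prop:L}, we get $u\in\calL^+(V)$.

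Next I would compute $\int_V(dd^cu)^m$ through the covering $P$. Let $B\subset\CC^m$ be the branch locus (a proper algebraic subset), $\Omega:=\CC^m\setminus B$, and $W:=P^{-1}(B)$, a proper analytic subset of $V$. On $P^{-1}(\Omega)$ the map $P$ is a local biholomorphism, so there $(dd^cu)^m=(dd^c(\tilde u\circ P))^m=P^*\bigl((dd^c\tilde u)^m\bigr)$ by the biholomorphic invariance of the locally bounded Monge--Amp\`ere operator; and since $P\colon P^{-1}(\Omega)\to\Omega$ is a $d$-sheeted covering, $\int_{P^{-1}(\Omega)}P^*\mu=d\int_\Omega\mu$ for any positive measure $\mu$ on $\Omega$ (cover $\Omega$ by evenly covered sets, use a partition of unity, and use that each sheet is a biholomorphism). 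Finally, because $u$ and $\tilde u$ are locally bounded psh and $W,B$ are proper analytic, hence pluripolar, the Bedford--Taylor theory gives $(dd^cu)^m(W)=0$ and $(dd^c\tilde u)^m(B)=0$. Combining,
$$\int_V(dd^cu)^m=\int_{P^{-1}(\Omega)}P^*\bigl((dd^c\tilde u)^m\bigr)=d\int_{\CC^m}(dd^c\tilde u)^m=d(2\pi)^m=(\deg V)(2\pi)^m,$$
and positivity of this value retroactively confirms that ``the mass of $\calL^+(V)$'' is well defined as in Corollary~\ref{cor:L}.

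The main obstacle I anticipate is the bookkeeping around the branch locus: justifying that the Monge--Amp\`ere measure of the bounded psh function $\tilde u\circ P$ assigns no mass to $W=P^{-1}(B)$ (and that $B$ carries no $(dd^c\tilde u)^m$-mass), so that the naive ``pull back and multiply by the number of sheets'' step is legitimate. This is exactly where one invokes that proper analytic subsets are pluripolar together with the Bedford--Taylor fact that $(dd^cu)^m$ puts no mass on pluripolar sets for locally bounded $u$; some extra care is needed because $V$ may be singular, but the singular locus is again a proper subvariety, so the current $[V]$ --- and hence $(dd^cu)^m$ --- is concentrated on the regular part and ignores it.
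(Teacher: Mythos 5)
Your argument is correct in its overall architecture and follows the same skeleton as the paper's proof: reduce, via Corollary~\ref{cor:L}, to a single conveniently chosen function pulled back from $\CC^m$ through the Noether projection $P$, use the max-formula computation behind (\ref{eqn:massL+}) to get $(2\pi)^m$ downstairs, and pick up the factor $\deg V$ from the number of sheets. Where you genuinely diverge is in the treatment of the branch locus. The paper arranges never to integrate over the ramified part at all: it translates and rescales the $x$-coordinates so that the closed unit ball (hence the support of $(dd^c\log^+\|x\|)^m$) avoids the branch locus (condition $(\star)$ in its proof), after which the preimage of the ball splits into $\deg V$ disjoint biholomorphic copies and only a mass-on-$E$ argument over the complement is needed. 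You instead keep the given coordinates, integrate over the unbranched part by a covering change of variables, and discard the branch locus and its preimage by pluripolarity. Your route is cleaner in that it needs no auxiliary recentering, at the price of a stronger measure-theoretic input.

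That input is the one place needing care. Bedford--Taylor's ``no mass on pluripolar sets'' is a statement about locally bounded psh functions on a manifold; it disposes of the part of $P^{-1}(\hbox{branch locus})$ lying in the regular part of $V$, and of the branch locus downstairs in $\CC^m$. But on the singular set of $V$ the measure in question is the wedge product $[V]\wedge(dd^cU)^m$ for an ambient bounded psh extension $U$, and your remark that ``$[V]$ ignores the singular locus, hence so does $(dd^cu)^m$'' is not automatic: the product is defined by a limiting procedure and mass could a priori concentrate there. What you need is the refinement (Demailly, \emph{Complex Analytic and Differential Geometry}, Ch.~III) that wedge products of $dd^c$ of locally bounded psh functions with a closed positive current $T$ put no mass on a complete pluripolar set that $T$ itself does not charge; since the trace measure of $[V]$ gives the singular locus measure zero, this closes the point. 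Alternatively you can sidestep it exactly as the paper does, by recentering and rescaling $x$ so that the unit torus carrying $(dd^c\tilde u)^m$ avoids the image of the branch and singular sets, so that all mass sits over the unramified regular part and only the classical manifold statement is used. With either patch your proof is complete, and its level of rigor at this delicate point is comparable to the paper's own treatment of the mass on the exceptional set $E$.
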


\begin{proof}
By Corollary \ref{cor:L} it suffices to compute the Monge-Amp\`ere mass of any function in $\calL^+(V)$.  By Proposition \ref{prop:L}(1), we  may take $u\in\calL^+(V)$ to be the function $u(x,y)=\log^+\|x\|$, where $(x,y)$  is a Noether presentation; and being independent of $y$, it is naturally identified as a function in $\calL^+(\CC^m)$.  Let $B$ be the closed unit ball in $\CC^m$.  We will first prove the theorem under the condition that
\begin{itemize} 
\item[($\star$)] \it The projection $(x,y)\stackrel{P}{\longmapsto} x $ is a local biholomorphism from a neighborhood of $P^{-1}(B)\cap V$  to a neighborhood of $B$, and $P^{-1}(B)$ is a union of $d$ disjoint sets.
\end{itemize}
(In other words, $B$ avoids the branch locus of the projection.)  Then on $\CC^m\setminus B$, the function $\log^+\|x\|$ satisfies $(dd^c\log^+\|x\|)^m=0$; hence $u$ satisfies 
\begin{equation}\label{eqn:ddcu}
(dd^cu)^m=0 \hbox{ in a neighborhood of each  } (x,y)\in P^{-1}(\CC^m\setminus B)\setminus E 
\end{equation}
where $E$ is the algebraic set given by the union of the singular points of $V$ and the branch points of $P$.  
Thus the $(N,N)$-current $T:= [V]\wedge (dd^cu)^m$ is zero on the set $(\CC^N\setminus (P^{-1}(B))\setminus  E$. We claim that it must therefore be zero on $E$ as well.  For if not, then for any test function $\varphi$, 
\begin{equation}\label{eqn:E}
0< \int_{\CC^N\setminus P^{-1}(B)}\varphi  T \  = \    \int_E  \varphi T = \int_E \varphi (dd^cu)^m = \lim_{k\to\infty} \int_E \varphi (dd^cu_{k})^m,
\end{equation}
where $\{u_k\}$ is a decreasing sequence of smooth psh functions, $u_k\searrow u$.  Since each $u_k$ is smooth, for each $k$ we may interpret $\int_E \varphi (dd^cu_{k})^m$ classically as the integral of a smooth $(m,m)$-form over $E$, an analytic set of complex dimension $<m$, which is always zero.  Hence the limit on the right-hand side of (\ref{eqn:E}) is zero, a contradiction. 

 Thus $T$ is supported on $V\cap P^{-1}(\overline B)$, which means that 
\begin{eqnarray*} \int_{V}(dd^cu)^m =   \int_{V\cap P^{-1}(\bar B)}(dd^cu)^m &=& \sum_{j=1}^{\deg V} \int_{\bar B}(dd^c\log^+\|x\|)^m \\
&=& \deg V \int_{\bar B}(dd^c\log^+\|x\|)^m
\end{eqnarray*}
where condition $(\star)$ is used to get the second equality in the first line. Observe now that as a global function on $\CC^m$,  $\log^+\|x\|\in \calL^+(\CC^m)$ and is maximal on $\CC^m\setminus \bar B$, so that $$\int_{\bar B}(dd^c\log^+\|x\|)^m = \int_{\CC^m}(dd^c\log^+\|x\|)^m=(2\pi)^m,$$ by 
(\ref{eqn:massL+}).  The conclusion of the theorem follows immediately.  

It remains to deal with the condition $(\star)$.  For each $x_0\in\CC^M$, observe that $P^{-1}(x_0)=:L_{x_0}$ is an affine plane of codimension $m$ while $E$ is an affine subvariety of  dimension at most $m-1$.  Hence we can find $x_0$ such that $L_{x_0}\cap E=\emptyset$. Then $$\bigcup_{\|x-x_0\|<\delta} L_x\cap E = \emptyset$$ for $\delta>0$ small enough; and in addition, 
$\left(\bigcup_{\|x-x_0\|<\delta} L_x \right)\cap V$ is a union of $d$ disjoint sets.  Now translate and rescale coordinates by the map $$(x,y)\longmapsto (\frac{1}{2\delta}(x-x_0),y)=(\tilde x,y).$$  
It is easy to see that $(\tilde x,y)$ is a Noether presentation for $V$ for which  $(\star)$ holds. We now consider the function $\log^+\|\tilde x\|$ and proceed as above.
\end{proof}

\section{Lelong degree} \label{sec:degree}
The complex Monge-Amp\`ere operator may be extended to certain unbounded functions.    
  If $u$ is a psh function on some domain $\Omega$, we define
$$
L(u):= \{z\in\Omega\colon u \hbox{ is \emph{not} locally bounded in } B(z,r)\cap\Omega,  \ \forall r>0\}.
$$
The following result (presented without proof) is a consequence of the general theory on complex manifolds developed in \cite{demailly:complex}, chapter III \S 4.  

\begin{proposition} \label{prop:41}
Let $T$ be a positive closed $(q,q)$-current in $\CC^N$ and let $u_1,\ldots,u_p$ be psh functions on $\CC^n$ with $p+q\leq n$.  Suppose that for each $j$, $L(u_j)$ is contained in an analytic set $A_j$.  
Given $m\in\{1,\ldots,p\}$, suppose that for each choice of $m$ functions $u_{j_1},\ldots,u_{j_m}$ the analytic set $A_{j_1}\cap\cdots\cap A_{j_m}$ has codimension at least $m$.  Then
\begin{enumerate}
\item The $(p+q,p+q)$-current $dd^cu_1\wedge\cdots\wedge dd^cu_p\wedge T$ is well defined on $\CC^n$.
\item If $\{u_j^{(k)} \}$ is a decreasing sequence of locally bounded psh functions on $\CC^n$, with $u_j^{(k)}\searrow u_j$ as $k\to\infty$, then
we have the weak-$*$ convergence of currents:
\begin{equation}\label{eqn:w*}
dd^cu_1^{(k)}\wedge\cdots\wedge dd^cu_p^{(k)}\wedge T \to dd^cu_1\wedge\cdots\wedge dd^cu_p\wedge T.
\end{equation} \qed
\end{enumerate} 
\end{proposition}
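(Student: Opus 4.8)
This is part of Demailly's theory of wedge products of currents with unbounded potentials (\cite{demailly:complex}, Chapter III, \S4), and the plan is to reconstruct that argument. Everything is local, so I would fix a ball $B\subset\CC^n$, assume the $u_j$ are psh on a neighbourhood of $\overline{B}$ and $T$ positive and closed there, and induct on $p$. Assuming the lower-order current
$$S \ := \ dd^cu_2\wedge\cdots\wedge dd^cu_p\wedge T$$
has already been constructed (a positive closed $(p-1+q,p-1+q)$-current obeying the same statement), one \emph{defines}
$$dd^cu_1\wedge dd^cu_2\wedge\cdots\wedge dd^cu_p\wedge T \ := \ dd^c\bigl(u_1\,S\bigr).$$
This is legitimate precisely when $u_1$ is locally integrable with respect to the trace measure $\sigma_S$ of $S$: then $u_1S$ has locally finite mass, $dd^c(u_1S)$ is automatically closed, and it is positive of bidegree $(p+q,p+q)$ because $S\ge 0$ is closed and $u_1$ is psh. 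So part (1) reduces to the single point $u_1\in L^1_{\mathrm{loc}}(\sigma_S)$, and the base case $p=1$ (where $S=T$) is contained in this.

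For that integrability I would argue as follows. Off $A_1\supseteq L(u_1)$ the function $u_1$ is psh and locally bounded, hence $\sigma_S$-integrable there, so the only danger is the mass that $\sigma_S$ assigns to small tubes around $A_1$. Here the codimension hypothesis is essential: a positive closed current of bidimension $(n-p+1-q,\,n-p+1-q)$ puts no mass on analytic sets of smaller dimension, and, quantitatively, a localized Chern--Levine--Nirenberg inequality bounds $\int_K|u_1|\,\sigma_S$ (for $K\subset\subset B$) by a product of $L^1$-norms of the potentials and the mass of $T$, once $\dim(A_1\cap\supp S)$ is under control. The assumption that $m$-fold intersections of the $A_j$ have codimension at least $m$, together with the constraint $p+q\le n$ (which keeps all intermediate bidimensions nonnegative), is exactly what propagates this control through the induction. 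Combining the at-most-logarithmic blow-up of $u_1$ along $A_1$ with the smallness of $\sigma_S$ near $A_1$ then gives $u_1\in L^1_{\mathrm{loc}}(\sigma_S)$, which is (1).

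For part (2) I would regularize: replace each $u_j$ by a decreasing sequence of smooth psh functions $u_j^{(k)}\searrow u_j$ on $B$ (standard convolution). On a slightly smaller ball their $L^1$-norms are bounded uniformly in $k$ (each $u_j^{(k)}$ lies between $u_j$ and $u_j^{(1)}$, both integrable), so by the $L^1$ Chern--Levine--Nirenberg estimates the lower-order products $S^{(k)}:=dd^cu_2^{(k)}\wedge\cdots\wedge dd^cu_p^{(k)}\wedge T$ have masses bounded uniformly in $k$ on compacts; by the inductive hypothesis they converge weak-$*$ to $S$. The remaining step, $u_1^{(k)}S^{(k)}\to u_1S$ weak-$*$, I would obtain from a uniform-integrability argument using $u_1^{(k)}\searrow u_1$, the uniform mass bound on the $\sigma_{S^{(k)}}$, and the uniform-in-$k$ version of the estimate above. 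Applying $dd^c$, which is weak-$*$ continuous, then gives (\ref{eqn:w*}); and independence of the order of the $u_j$ and of the approximating sequences follows formally, since any two admissible choices can be joined through a common regularization.

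The step I expect to be the real obstacle is the integrability control near the pole sets $A_j$ — both $u_1\in L^1_{\mathrm{loc}}(\sigma_S)$ and its uniform-in-$k$ refinement — i.e., showing that the trace of a positive closed current of the relevant bidimension charges neighbourhoods of analytic sets of the prescribed codimension only lightly. This is the place where the hypothesis on codimensions of intersections of the $A_j$ must be used in full, via Demailly's sharpenings of the Chern--Levine--Nirenberg inequalities (or, equivalently, Siu's analyticity of the Lelong upperlevel sets of $S$).
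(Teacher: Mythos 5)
The paper does not prove this proposition at all: it is stated as a direct consequence of Demailly's theory (\cite{demailly:complex}, Chapter III, \S 4), and your sketch is precisely a reconstruction of that cited argument --- the inductive definition $dd^cu_1\wedge S:=dd^c(u_1S)$ via local integrability of $u_1$ against the trace measure of $S$, the Chern--Levine--Nirenberg estimates controlling the mass near the pole sets $A_j$ under the codimension hypothesis, and the monotone convergence theorem obtained from uniform mass bounds. So your approach coincides with the paper's (i.e.\ with its source), and the steps you defer are exactly the technical lemmas established there.
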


As an example of the second convergence, we have $$dd^c(\max\{\log|z_1|,-\log n\}) = [\{|z_1|=\tfrac{1}{n}\}]\wedge d\theta$$ by Proposition \ref{lem:43}, 
which converges to $[\{z_1=0\}]$. 

More generally, for an irreducible $p\in\CC[V]$ we have $\max\{\log|p|,-\log n\}\searrow \log|p|$ as $n\to\infty$, and we have the convergence
$$
dd^c\max\{\log|p(z)|,-\log n\}\wedge T = [\{ |p(z)|=\tfrac{1}{n} \}]\wedge d^c\log|p|\wedge T \to 2\pi[\{p(z)=0\}]\wedge T
$$
for any closed positive current $T$.  We can see this convergence near a regular point of $\{p=0\}$ by making a local holomorphic linearization of coordinates (which we denote by $H$) that transforms $\log|p|$ into $\log|z_1|$, and calculating as above.   
We recover the \emph{Lelong-Poincar\'e formula}: $$dd^c\log|p|\wedge T = 2\pi[\{p=0\}]\wedge T.$$ 


\bigskip

Let now $v\in\calL^+(V)$.  Suppose $u\in c\calL(V)$ for some $c>0$, and satisfies:
\begin{enumerate}
\item $L(u)\cap V$ is contained in an analytic set of dimension at most $m-1$; and
\item $\frac{1}{c}u$ is a decreasing limit $u_j\searrow\frac{1}{c}u$ of functions $u_j\in\calL^+(V)$.
\end{enumerate}
\begin{lemma}
Suppose $\{u_j\}_{j=1}^{\infty}$ and $u$ satisfy the two conditions above.  Suppose there is $v\in\calL^+(V)$ and a compact $K\subset V$ such that the support of $dd^c u_j\wedge (dd^cv)^{m-1}$ is contained in $K$ for each $j$.  Then
\begin{equation} \label{eqn:cL}
\int_V dd^c u\wedge (dd^cv)^{m-1}  =  \lim_{j\to\infty} c \int_V dd^cu_j\wedge(dd^cv)^{m-1} = c\deg(V)(2\pi)^m.
\end{equation} \end{lemma}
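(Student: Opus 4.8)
The plan is to use the convergence result of Proposition~\ref{prop:41} together with the mass computation of Theorem~\ref{thm:L}. Since $\frac{1}{c}u$ is the decreasing limit of the locally bounded psh functions $u_j\in\calL^+(V)$, and $L(u)\cap V$ is contained in an analytic set of dimension at most $m-1$, hypothesis~(1) of Proposition~\ref{prop:41} applies (with $p=1$, $q=m-1$, $T=(dd^cv)^{m-1}$, and the single analytic set $A_1$ of codimension $\geq 1$): the current $dd^c u\wedge(dd^cv)^{m-1}$ is well defined, and $dd^c u_j\wedge(dd^cv)^{m-1}\to dd^c(\tfrac1c u)\wedge(dd^cv)^{m-1}$ weak-$*$. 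Multiplying by the constant $c$ gives $c\,dd^c u_j\wedge(dd^cv)^{m-1}\to dd^c u\wedge(dd^cv)^{m-1}$ weak-$*$.

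Next I would pass from weak-$*$ convergence of currents to convergence of total masses. This is where the uniform compact-support hypothesis is essential: all the currents $dd^c u_j\wedge(dd^cv)^{m-1}$ are supported in the fixed compact $K\subset V$. Choose a test function $\varphi$ that is identically $1$ on a neighborhood of $K$; then pairing with $\varphi$ recovers the total mass of each current, and also of the limit current $dd^cu\wedge(dd^cv)^{m-1}$ (which, being a weak-$*$ limit of currents supported in $K$, is itself supported in $K$). Hence
\[
\int_V dd^c u\wedge(dd^cv)^{m-1} = \langle dd^cu\wedge(dd^cv)^{m-1},\varphi\rangle = \lim_{j\to\infty} c\,\langle dd^c u_j\wedge(dd^cv)^{m-1},\varphi\rangle = \lim_{j\to\infty} c\int_V dd^c u_j\wedge(dd^cv)^{m-1},
\]
which gives the first equality in~(\ref{eqn:cL}).

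For the second equality, observe that each $u_j$ and $v$ lie in $\calL^+(V)$, so in particular both are bounded below, satisfy $u_j-v=O(1)$ (as both are within $O(1)$ of $\log^+\|z\|$), and both tend to infinity as $|z|\to\infty$. Thus Corollary~\ref{cor:L}, specifically the statement about the integral~(\ref{eqn:uv2}) with $j=1$, applies: $\int_V dd^cu_j\wedge(dd^cv)^{m-1} = \int_V (dd^cv)^m$, provided this common mass is a nonzero constant — and by Theorem~\ref{thm:L} it equals $(\deg V)(2\pi)^m\neq 0$. Therefore each term $c\int_V dd^cu_j\wedge(dd^cv)^{m-1}$ equals $c(\deg V)(2\pi)^m$, so the limit is $c\deg(V)(2\pi)^m$ as claimed.

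The main obstacle I anticipate is the justification that the weak-$*$ limit current is genuinely supported in $K$ and that the mass passes to the limit cleanly — one must be slightly careful that $\varphi$ can be taken compactly supported in $V$ while still being $\equiv 1$ near $K$ (harmless since $K$ is compact), and that no mass escapes to infinity along the sequence, which is exactly what the uniform support hypothesis rules out. A secondary technical point is checking that Proposition~\ref{prop:41} is being applied in the correct dimensional range on $V$ rather than on $\CC^N$; this is handled as in the earlier corollaries by working with the trivial extension across the singular locus and restricting ambient psh extensions, but it should be remarked on rather than belabored.
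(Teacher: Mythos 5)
Your proposal is correct and follows essentially the same route as the paper: the first equality is obtained by pairing with a test function $\varphi\equiv 1$ near $K$ and invoking the weak-$*$ convergence (\ref{eqn:w*}) of Proposition \ref{prop:41}, and the second equality comes from Corollary \ref{cor:L} together with the mass computation of Theorem \ref{thm:L}. The only difference is that you spell out the (harmless) support bookkeeping for the limit current more explicitly than the paper does.
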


\begin{proof}
Let $\varphi$ be a smooth, compactly supported function on $V$ such that $\varphi|_{K}\equiv 1$.  Then the first equality is equivalent to
$$
\int_V \varphi dd^c u\wedge (dd^cv)^{m-1}  =  \lim_{j\to\infty} c \int_V \varphi dd^cu_j\wedge(dd^cv)^{m-1},
$$
which is true by (\ref{eqn:w*}). 

The second equality is a consequence of Corollary \ref{cor:L} and Theorem \ref{thm:L}.   
\end{proof}

We will apply the above lemma to $u=\log|p|$ where $p$ is a polynomial.  
To construct the $u_j$s, we will need another lemma.  Before stating it, we recall a standard fact: if $v\in\calL(\CC^m)$ is bounded from below and the Robin function
$$
\rho_v(z):=\limsup_{|\lambda|\to\infty} v(\lambda z)-\log|\lambda|
$$
is finite at every point of $\CC^m\setminus\{0\}$, then $u\in\calL^+(\CC^m)$.  

\begin{lemma}\label{lem:44}
Let $p\in\CC[x]$ be a polynomial and $\hat p$ its leading homogeneous part.  Suppose the set
\begin{equation}\label{eqn:44}
\{x\in\CC^m: \hat p(x)=x_2=\cdots=x_m=0\}
\end{equation}
consists only of the origin.  Let $j\in\NN$ and define
\begin{equation}\label{eqn:45}
u_j(x):=\max\{\tfrac{1}{\deg p}\log|p(z)|,\log\tfrac{|x_2|}{j},\ldots,\log\tfrac{|x_m|}{j},-\log j \}.
\end{equation}
Then $u_j\in\calL^+(\CC^m)$.
\end{lemma}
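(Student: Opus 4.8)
The plan is to verify the three hypotheses of the criterion recalled immediately before the statement: that $u_j\in\calL(\CC^m)$, that $u_j$ is bounded below, and that its Robin function is finite at every point of $\CC^m\setminus\{0\}$.

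The first two are routine. Since $u_j$ is a maximum of finitely many psh functions it is psh. Writing $\deg p=:d$, the growth estimate $|p(x)|\le C(1+\|x\|)^d$ gives $\tfrac1d\log|p(x)|\le\log^+\|x\|+C_1$; the inequality $|x_i|\le\|x\|$ gives $\log(|x_i|/j)\le\log^+\|x\|$; and $-\log j\le 0\le\log^+\|x\|$ for $j\ge1$. Taking the maximum shows $u_j(x)\le\log^+\|x\|+C$, so $u_j\in\calL(\CC^m)$. Boundedness below is immediate, since $u_j\ge-\log j$ everywhere.

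The heart of the argument is the computation of the Robin function $\rho_{u_j}$. I would fix $z\in\CC^m\setminus\{0\}$ and analyze $u_j(\lambda z)-\log|\lambda|$ term by term as $|\lambda|\to\infty$, using that $\max\{a_1,\dots,a_n\}-c=\max\{a_1-c,\dots,a_n-c\}$. The term $\log(|\lambda z_i|/j)-\log|\lambda|$ is the constant $\log(|z_i|/j)$ (equal to $-\infty$ when $z_i=0$), and $-\log j-\log|\lambda|\to-\infty$. For the remaining term I would expand $p(\lambda z)=\lambda^{d}\hat p(z)+(\text{lower powers of }\lambda)$. If $\hat p(z)\ne0$, then $\tfrac1d\log|p(\lambda z)|-\log|\lambda|\to\tfrac1d\log|\hat p(z)|\in\RR$. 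If $\hat p(z)=0$, then $\lambda\mapsto p(\lambda z)$ has $\lambda$-degree at most $d-1$, so $\tfrac1d\log|p(\lambda z)|-\log|\lambda|\le O(1)-\tfrac1d\log|\lambda|\to-\infty$. Since the maximum of finitely many convergent quantities converges to the maximum of the limits (with $-\infty$ allowed), we obtain
$$\rho_{u_j}(z)=\max\Big\{\tfrac1d\log|\hat p(z)|,\ \log\tfrac{|z_2|}{j},\ \dots,\ \log\tfrac{|z_m|}{j}\Big\}$$
with the convention $\log0=-\infty$. Every entry is either finite or $-\infty$, so $\rho_{u_j}(z)<\infty$; and $\rho_{u_j}(z)>-\infty$ precisely when $\hat p(z),z_2,\dots,z_m$ do not all vanish — which, for $z\ne0$, is exactly the content of hypothesis (\ref{eqn:44}). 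Hence $\rho_{u_j}$ is finite on $\CC^m\setminus\{0\}$, and the criterion gives $u_j\in\calL^+(\CC^m)$. (In fact $z\mapsto\max\{|\hat p(z)|^{1/d},|z_2|,\dots,|z_m|\}$ is continuous and strictly positive on the compact unit sphere, so $\rho_{u_j}$ is bounded below there, which is the quantitative form of the conclusion.)

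I expect the term-by-term limit analysis to be the only genuine obstacle — in particular the observation that when $\hat p(z)=0$ the polynomial $\lambda\mapsto p(\lambda z)$ drops a degree, so that term no longer feeds the $\limsup$. This is exactly where the hypothesis on the zero set (\ref{eqn:44}) is indispensable: without it, $\rho_{u_j}$ could equal $-\infty$ along some direction $z\ne0$ with $\hat p(z)=z_2=\cdots=z_m=0$, and $u_j$ would fail to lie in $\calL^+(\CC^m)$ (indeed it would be bounded along that line).
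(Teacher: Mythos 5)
Your proposal is correct and follows essentially the same route as the paper: compute the Robin function term by term to get $\rho_{u_j}(z)=\max\bigl\{\tfrac{1}{\deg p}\log|\hat p(z)|,\log\tfrac{|z_2|}{j},\ldots,\log\tfrac{|z_m|}{j}\bigr\}$ and invoke the standard criterion, with hypothesis (\ref{eqn:44}) guaranteeing $\rho_{u_j}>-\infty$ off the origin. Your extra care in the case $\hat p(z)=0$ (the degree drop in $\lambda\mapsto p(\lambda z)$) and the explicit check that $u_j\in\calL(\CC^m)$ and is bounded below are fine refinements of the same argument.
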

\begin{proof}
We have for each $x\in\CC^m$ that $\log\tfrac{|\lambda x_k|}{j} -\log|\lambda| = \log \tfrac{|x_k|}{j}$ and 
\begin{eqnarray*}
\lim_{|\lambda|\to\infty} \tfrac{1}{\deg p}\log|p(\lambda x)| -\log|\lambda|&=& \lim_{|\lambda|\to\infty} \tfrac{1}{\deg p}\log\left|\hat p(x) +O(|\lambda|^{-1})\right|  \\
&=& \tfrac{1}{\deg p}\log|\hat p(x)|, \end{eqnarray*} 
so $\rho_{u_j}(x) = \max\left\{\frac{1}{\deg p}\log|\hat p(x)|, \log\tfrac{|x_2|}{j},\ldots,\log\tfrac{|x_m|}{j}\right\}$.  By (\ref{eqn:44}), $\rho_{u_j}(x)$ is $-\infty$ only at the origin, and finite for all other $x\in\CC^m$. Hence $u_j\in\calL^+(\CC^m)$.  
\end{proof}

\begin{proposition} \label{prop:45}
Let $V\subset\CC^N$, with $\dim(V)=m$, and let $(x,y)$ be a Noether presentation of $V$ for which (\ref{eqn:44}) holds. Suppose $p\in\CC[x]\subseteq\CC[V]$.  Then for each $v\in\calL^+(V)$, 
\begin{eqnarray} \label{eqn:polyL}
\deg(p)\deg(V)(2\pi)^m &=& \int_V dd^c\log^+|p|\wedge(dd^cv)^{m-1}  \\
&=&\int_V dd^c\log|p|\wedge (dd^cv)^{m-1}.  \label{eqn:polyL2}
\end{eqnarray}
\end{proposition}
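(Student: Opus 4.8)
The plan is to deduce (\ref{eqn:polyL2}) and (\ref{eqn:polyL}) by identifying the Monge-Amp\`ere mass of $\log^+|p|$ (as a function on $V$) with a known multiple of the mass of $\calL^+(V)$, using the degree bookkeeping for the branched cover. First I would reduce everything to a computation on $\CC^m$: since $p\in\CC[x]$ depends only on the base coordinates, $\log|p|$ and $\log^+|p|$ are pullbacks under the projection $P\colon V\to\CC^m$ of the corresponding functions on $\CC^m$. The hypothesis (\ref{eqn:44}) is exactly what Lemma \ref{lem:44} needs, so I would introduce the approximants $u_j$ of (\ref{eqn:45}); these lie in $\calL^+(\CC^m)$, hence (being independent of $y$) define functions in $\calL^+(V)$, and $u_j\searrow\tfrac{1}{\deg p}\log^+|p|$ on $\CC^m$ (one should note $u_j\geq -\log j\to-\infty$ is not the issue — the max with $\log|x_k|/j$ terms forces $u_j\to\tfrac1{\deg p}\log|p|$ where $|p|\geq 1$ and the lower clamp disappears; more carefully $u_j\searrow\max\{\tfrac1{\deg p}\log|p|,0\}=\tfrac1{\deg p}\log^+|p|$ since away from the coordinate axes and the zero set the clamping terms all tend to $-\infty$, and on the axes/zero set one checks directly). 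The set $L(\tfrac1{\deg p}\log|p|)\cap V$ is contained in $P^{-1}(\{p=0\})$ together with the branch locus, an analytic set of dimension $\le m-1$, so condition (1) preceding the lemma before Proposition \ref{prop:45} holds, and condition (2) holds by construction with $c=\deg p$.

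Next I would apply that lemma (the one immediately preceding, giving (\ref{eqn:cL})) with $u=\log^+|p|$ rescaled: strictly, apply it to $\tfrac1{\deg p}\log^+|p|$ with the sequence $\{u_j\}$ and with $c=\deg p$, once we check the support hypothesis, namely that $dd^cu_j\wedge(dd^cv)^{m-1}$ is supported in a fixed compact $K$. For this I would argue that $u_j$ agrees with a fixed function in $\calL^+$ outside a large ball: for $\|x\|$ large, $\tfrac1{\deg p}\log|p|$ dominates all the $\log|x_k|/j$ and $-\log j$ terms uniformly in $j$ once we fix $j$ large — actually more simply, outside a ball of radius $R_j$ the function $u_j$ equals $\tfrac1{\deg p}\log|p|$, which is pluriharmonic there (as $p\ne 0$ and $\log|p|$ is locally the real part of a holomorphic function away from its zeros), so $(dd^cv)^{m-1}$ paired against $dd^cu_j$ vanishes there; I would instead compare $u_j$ with $v$ directly as in Corollary \ref{cor:L2}/\ref{cor:L}. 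Granting the support condition, (\ref{eqn:cL}) gives
$$
\int_V dd^c\log^+|p|\wedge(dd^cv)^{m-1} = \deg p\cdot\deg(V)(2\pi)^m,
$$
which is (\ref{eqn:polyL}). Then (\ref{eqn:polyL2}) follows from Corollary \ref{cor:L2}: $\log^+|p|$ and $\log|p|$ differ by a bounded amount (indeed $0\le\log^+|p|-\log|p|\le$ something not bounded — one must be careful where $|p|$ is small, so instead use that $\log^+|p|-\log|p| = (\log|p|)^-$ which is bounded only if $p$ is bounded below; the correct statement is that both sides have the same $dd^c\wedge(dd^cv)^{m-1}$ by applying Corollary \ref{cor:L2} to $u=\log^+|p|+\epsilon w$-type comparisons, or more cleanly via the Lelong-Poincar\'e discussion: $dd^c\log|p|\wedge T$ and $dd^c\log^+|p|\wedge T$ differ by $dd^c\min\{\log|p|,0\}\wedge T$, a current supported on the compact set $\{|p|\le 1\}$ with zero total mass against a test form that is $1$ there, by integration by parts exactly as in the proof of Corollary \ref{cor:L}).

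The main obstacle I expect is verifying the compact-support hypothesis of the preceding lemma for the currents $dd^cu_j\wedge(dd^cv)^{m-1}$ uniformly enough to pass to the limit, and handling the low-dimensional exceptional set (the branch locus of $P$ together with $\{p=0\}\cap(\text{singular locus})$) where $u_j$ is built from functions on $\CC^m$ but must be read on $V$ — this is the same subtlety as condition $(\star)$ in the proof of Theorem \ref{thm:L}, and I would deal with it the same way, noting that the exceptional set is analytic of dimension $\le m-1$ so smooth $(m,m)$-forms integrate to zero over it, which kills any spurious contribution of the limiting current there (cf. the argument around (\ref{eqn:E})). Everything else — pluriharmonicity of $\log|p|$ off $\{p=0\}$, the reduction to $\CC^m$, and the final bookkeeping $\int = \deg p\cdot\deg V\cdot(2\pi)^m$ — is routine given Theorem \ref{thm:L} and Corollaries \ref{cor:L2}, \ref{cor:L}.
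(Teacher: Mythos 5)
Your skeleton (use the approximants of Lemma \ref{lem:44}, reduce to the base $\CC^m$, pass to a limit, then count sheets as in Theorem \ref{thm:L}) is the right shape, but three of your key steps do not hold as stated. First, the limit of the $u_j$ in (\ref{eqn:45}) is $\tfrac{1}{\deg p}\log|p|$, not $\tfrac{1}{\deg p}\log^+|p|$: every clamping term $\log\tfrac{|x_k|}{j}$ and $-\log j$ tends pointwise to $-\infty$, and there is no term frozen at $0$ (that only happens at $j=1$), so your identification of the decreasing limit with $\log^+$ is simply wrong, and with it the way you feed the sequence into the lemma containing (\ref{eqn:cL}). Second, the compact-support hypothesis of that lemma fails for an arbitrary $v\in\calL^+(V)$: your justification that ``outside a ball of radius $R_j$ the function $u_j$ equals $\tfrac1{\deg p}\log|p|$'' is false when $m\geq 2$, because the zero set $\{p=0\}$ is unbounded and near it the max in (\ref{eqn:45}) is attained by the clamping terms, so $dd^cu_j$ (and hence $dd^cu_j\wedge(dd^cv)^{m-1}$ for a generic $v$) has unbounded support. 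The hypothesis (\ref{eqn:44}) is not merely what Lemma \ref{lem:44} ``needs''; it is exactly what makes the support compact \emph{for the special choice} $v=\max\{\log|x_2|,\ldots,\log|x_m|,0\}$, since then $(dd^cv)^{m-1}$ lives on the torus $\{|x_2|=\cdots=|x_m|=1\}$ and its intersection with $\{|p|\leq j^{-\deg p}\}$ is compact. The paper therefore works with this special $v$ throughout and only at the very end replaces it by a general $v\in\calL^+$ via Theorem \ref{thm:33}, using that $\{|p|=\tfrac1j\}$ is unbounded; doing the replacement at the start, as you propose, is where the argument breaks.

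Third, your passage from $\log^+|p|$ to $\log|p|$ is unjustified: $\{|p|\leq 1\}\subset\CC^m$ is unbounded for $m\geq 2$ (take $p=x_1$), so the difference current $dd^c\min\{\log|p|,0\}\wedge T$ is not supported in a compact set and the integration-by-parts argument does not go through; Corollary \ref{cor:L2} also does not apply since $\log^+|p|-\log|p|$ is unbounded near the zeros of $p$. That this step genuinely needs a hypothesis is shown by the example following Proposition \ref{prop:balayage}, where $\int dd^c\log^+|p|$ and $\int dd^c\log|p|$ against the same current differ. In the paper the equality of (\ref{eqn:polyL}) and (\ref{eqn:polyL2}) comes out of the same construction: the $j=1$ member of the family produces $dd^c\max\{\tfrac1{\deg p}\log|p|,0\}$ and the limit $j\to\infty$ (via Proposition \ref{prop:41}) produces $dd^c\log|p|$, both supported in the fixed compact set $\{|p|\leq 1,\ |x_2|\leq 1,\ldots,|x_m|\leq 1\}$ intersected with the torus, which is compact precisely because $p\in\CC[x]$ and (\ref{eqn:44}) holds. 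Your treatment of the branch locus via the argument around (\ref{eqn:E}) is fine, but the three points above are genuine gaps, not routine verifications.
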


\begin{proof}
Let the functions $u_j$, $j\in\NN$, be as in the previous lemma.  We compute by Lemma \ref{lem:43},
\begin{eqnarray} 
(dd^cu_j)^m &=& [S_j]\wedge d^c\left(\tfrac{1}{\deg p}\log|p(x)|\right)\wedge d^c\log\tfrac{|x_2|}{j}\wedge\cdots\wedge d^c\log\tfrac{|x_m|}{j} \nonumber \\
&=& [S_j]\wedge  d^c\left(\tfrac{1}{\deg p}\log|p(x)|\right)\wedge d^c\log|x_2|\wedge\cdots\wedge d^c\log|x_m|. \label{eqn:46}
\end{eqnarray}
Let $T_j:=\{x\in\CC^m: |p(x)|\leq j^{-\deg p} \}$.  Then $S_j=T_j\cap S$, where $$S=\{|x_2|=\cdots=|x_m|=1\}.$$  
Put $v:=\max\{\log|x_2|,\ldots,\log|x_n|,0\}$; then we also have
\begin{equation*}
(dd^cv)^{m-1} = [S]\wedge d^c\log|x_2|\wedge\cdots\wedge d^c\log|x_m|;
\end{equation*}
in particular, $(dd^cv)^{m-1}$ is supported on $S$.  Together with (\ref{eqn:46}),  Lemma \ref{lem:44}, and Theorem \ref{thm:L}, we have
\begin{eqnarray*}
\int_{T_j} \varphi d^c\left(\tfrac{1}{\deg p}\log|p(x)|\right) \wedge (dd^cv)^{m-1} &=& \int_{S_j}  d^c\left(\tfrac{1}{\deg p}\log|p(x)|\right) \wedge (dd^cv)^{m-1} \\
 &=&  \int (dd^cu_j)^m = (2\pi)^m,
\end{eqnarray*}
where $\varphi$ on the left is a smooth compactly supported function with $\varphi|_{K}\equiv 1$; here we take 
$K:=\{|p(x)|\leq 1, |x_2|\leq 1, \ldots, |x_m|\leq 1\}$ which is a fixed compact set containing $S_j$ for all $j$.

For $j\geq 1$ we have $|p(z)|\leq 1$ so that $\log|p(z)|\leq 0\leq v(z)$ on $T_j$.  Hence $v|_{T_j} \equiv w|_{T_j}$, where $w=\max\{v,\tfrac{1}{\deg p}\log|p(z)|\}\in\calL^+(\CC^m)$, and the left-hand side is equal to
$$
\int_{T_j} \varphi  d^c\left(\tfrac{1}{\deg p}\log|p(x)|\right) \wedge (dd^cw)^{m-1}.
$$
Let $\{w_{\epsilon}\}_{\epsilon>0}$ be smooth functions in $\calL^+(\CC^m)$ with $w_{\epsilon}\searrow w$ as $\epsilon\to 0$.  Then the above integral is equal to 
\begin{equation}\label{eqn:epsilon}
\lim_{\epsilon\to 0} \int_{T_j}\varphi d^c\left(\tfrac{1}{\deg p}\log|p(x)|\right) \wedge (dd^cw_{\epsilon})^{m-1}.
\end{equation}
For fixed $\epsilon$, the integral inside the limit may be rewritten as the pairing of a current with smooth compactly supported form:
$$\left\langle [T_j]\wedge d^c\bigl(\tfrac{1}{\deg p}\log|p|\bigr), \varphi dd^cw_{\epsilon}\right\rangle
= \left \langle dd^c\max\bigl\{\tfrac{1}{\deg p}\log|p|,-\log j\bigr\},\varphi dd^cw_{\epsilon}   \right\rangle
$$
where we apply Lemma \ref{lem:43} with $k=1$; hence  (\ref{eqn:epsilon}) becomes 
\begin{eqnarray*}
&& \lim_{\epsilon\to 0} \int \varphi dd^c\max\{\tfrac{1}{\deg p}\log|p|,-\log j\}\wedge (dd^cw_{\epsilon})^{m-1} \\
&&\qquad\qquad =\ \int\varphi  dd^c\max\{\tfrac{1}{\deg p}\log|p|,-\log j\}\wedge (dd^cw)^{m-1}, 
\end{eqnarray*}
using convergence of currents (or Proposition \ref{prop:41}).  

Altogether, we have 
$$
(2\pi)^m \ = \ \int \varphi dd^c\max\{\tfrac{1}{\deg p}\log|p|,-\log j\}\wedge (dd^cw)^{m-1}.
$$
We may drop $\varphi$ from the above integral, as the support of $dd^c\max\{\log|p|,-\log j\} \wedge (dd^cw)^{m-1}$ is the set $K\cap\{|p|=\tfrac{1}{j}\}\subset K$.  
Finally, by Theorem \ref{thm:33}, we may replace $w$ with any other $v\in\calL^+(\CC^m)$ since the support of $dd^c\max\{\log|p|,-\log j\}$ is the (unbounded) set $\{|p|=\tfrac{1}{j}\}$.
 When $j=1$, this proves (\ref{eqn:polyL}) when $V=\CC^m$.   
Proposition \ref{prop:41} may be applied to take the limit as $j\to\infty$ on the right-hand side to yield 
$$
(2\pi)^m  \ = \ \frac{1}{\deg p}\int \varphi dd^c\log|p| \wedge (dd^cw)^{m-1},
$$
which  proves (\ref{eqn:polyL2}) when $V=\CC^m$.

For a general variety $V$, the same argument as in the proof of Theorem \ref{thm:L} gives 
\begin{eqnarray*}
\int_V  dd^c\log|p| \wedge (dd^cv)^{m-1} &=& \sum_{k=1}^{\deg V} \int_{\CC^m} dd^c\log|p| \wedge (dd^cv)^{m-1} 
\end{eqnarray*}
if $v=\log\|x\|$ and the support of the current $[V]\wedge dd^c\log|p| \wedge (dd^cv)^{m-1}$ is away from the branch locus of the projection.  Otherwise, as in that proof, we modify $v$ by an affine map so that the support of $(dd^cv)^{N-1}$ is away from the branch locus, and the above equation holds. Applying the $\CC^m$ case to each integral in the right-hand sum yields formula (\ref{eqn:polyL2}).  Similarly, we also get (\ref{eqn:polyL}).   Finally, by Theorem \ref{thm:33}, these formulas hold for all functions in $\calL^+(V)$.
\end{proof}

Proposition \ref{prop:45}  motivates the following definition.
\begin{definition}\rm
Fix $v\in\calL^+(V)$. Given $p\in\CC[V]$, the \emph{Lelong degree of $p$ on $V$} is defined by
$$
\deg_{\calL(V)}(p) \  := \  \frac{1}{\deg(V)(2\pi)^m} \int_V dd^c\log|p|\wedge (dd^cv)^{m-1}.
$$
\end{definition}

\begin{remark}\rm 
\begin{enumerate}
\item The Lelong degree is independent of $v$ by Corollary \ref{cor:L}, and  coincides with the usual degree when $p\in\CC[x]\subseteq\CC[V]$ by Proposition \ref{prop:45}. 
\item The definition in terms of a Monge-Amp\`ere formula is similar to that of a Lelong number:  $\deg_{\calL(V)}(p)$ may be interpreted as a Lelong number for the current $dd^c\log|p|$.  
\item The Lelong degree is independent of coordinates in $\CC^m$ since it is defined in terms of the $dd^c$-operator which is invariant under biholomorphic maps.  In particular, the definition makes sense without reference to a Noether presentation.  A Noether presentation is convenient for computation.
\end{enumerate}
\end{remark}

\begin{example}\rm
 Let $V\subset\CC^2$ be the quadratic curve with equation $x=y^2$.  Then $(x,y)$ is a Noether presentation. 
Since $\log|y|=\frac{1}{2}\log|x|$, we have 
$$
\deg_{\calL(V)}(y) = \tfrac{1}{2}\deg(x)=\tfrac{1}{2}
$$
by Proposition \ref{prop:45}.

For a general polynomial $p\in\CC[V]$, its normal form is $p_1(x)+yp_2(x)$.  Using the estimates
$$
\max\{|p_1|,|yp_2|\}\leq |p_1+yp_2| \leq 2\max\{|p_1|,|yp_2|\}
$$
we see that $\log|p|- \max\{\log|p_1|, \log|yp_2|\} = O(1)$.  Thus using Theorem \ref{thm:33}, we may replace $\log|p|$ by $u=\max\{\log|p_1|, \log|yp_2|\}$ in the computation of Lelong degree.  If $\deg(p_1)>\deg(p_2)$ then for sufficiently large values of $x$, we have 
$$\log|p_1(x)|>\log|p_2(x)| + \tfrac{1}{2}\log|x| = \log|yp_2(x)|,$$
so that $u=\log|p_1(x)|$.  On the other hand, if $\deg(p_1)\leq \deg(p_2)$ then $u=\log|yp_2(x)|=\tfrac{1}{2}\log|x|+\log|p_2(x)|$ for sufficiently large values of $x$.  It follows   that $$\deg_{\calL(V)}(p) = \max\{\deg(p_1),\deg(p_2)+\tfrac{1}{2}\}.$$
\end{example}

It may or may not be possible to replace $\log|p|$ by $\log^+|p|$ in the computation of Lelong degree.  We need an additional condition.

\begin{proposition}\label{prop:balayage}
Suppose $z=(x,y)$ is a Noether presentation of $V$ where $x=(x_1,\ldots,x_m)$.  Suppose $p\in\CC[V]$ satisfies one of the following conditions: for each $j=1,\ldots,m$, either the set 
$$K_j:=\{z\in V\colon |p(x,y)|\leq 1, \ |x_k|\leq 1  \hbox{ for all }k\neq j \}$$
is compact, or if $K_j$ is unbounded, then $\log|p|$ is $O(1)$ on $K_j$ as $|x|\to\infty$.  

  Then
$$
\deg_{\calL(V)}(p) = \frac{1}{\deg(V)(2\pi)^m}\int_V dd^c\log^+|p|\wedge(dd^cv)^{m-1}.
$$
More generally, we may replace $\log^+|p|$ in the integral by $\max\{\log|p|,c\}$ for any $c\in\RR$.  
\end{proposition}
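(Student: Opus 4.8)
The plan is to compare $\int_V dd^c\log^+|p|\wedge(dd^cv)^{m-1}$ with $\int_V dd^c\log|p|\wedge(dd^cv)^{m-1}$, the latter being $(\deg V)(2\pi)^m\deg_{\calL(V)}(p)$ by definition; here $v\in\calL^+(V)$ is arbitrary, and the more general statement with $\max\{\log|p|,c\}$ will fall out of the first step at no extra cost. First I would observe that for any two real constants $c,c'$ the functions $\max\{\log|p|,c\}$ and $\max\{\log|p|,c'\}$ are locally bounded psh on $V$ (bounded below by their constants, above by plurisubharmonicity) and differ by a bounded amount. Since $\int_V dd^cv\wedge(dd^cv)^{m-1}=\int_V(dd^cv)^m=(\deg V)(2\pi)^m$ is finite by Theorem \ref{thm:L}, Corollary \ref{cor:L2} applies with $S=(dd^cv)^{m-1}$ and $w=v$, showing that $\int_V dd^c\max\{\log|p|,c\}\wedge(dd^cv)^{m-1}$ does not depend on $c$; call this common value $I$. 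Taking $c=0$ gives $I=\int_V dd^c\log^+|p|\wedge(dd^cv)^{m-1}$, so it is enough to prove $I=\int_V dd^c\log|p|\wedge(dd^cv)^{m-1}$; as the argument below is carried out for an arbitrary $v\in\calL^+(V)$, this is exactly the assertion, and with the first step also the $\max\{\log|p|,c\}$ refinement.

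For one inequality I would let $c\to-\infty$, so that $\max\{\log|p|,c\}\searrow\log|p|$, and use the convergence of currents for decreasing sequences (Proposition \ref{prop:41}, applied on the regular part of $V$, i.e. the Lelong--Poincar\'e limit discussed after it) to get $dd^c\max\{\log|p|,c\}\wedge(dd^cv)^{m-1}\to dd^c\log|p|\wedge(dd^cv)^{m-1}$ weak-$*$. These are positive $(m,m)$-currents, so lower semicontinuity of mass under weak-$*$ convergence yields $\int_V dd^c\log|p|\wedge(dd^cv)^{m-1}\le I$.

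The reverse inequality is the heart of the matter: one must show that no mass escapes to infinity, i.e. that $\lim_{R\to\infty}\limsup_{c\to-\infty}\int_{V\setminus\{\|z\|\le R\}}dd^c\max\{\log|p|,c\}\wedge(dd^cv)^{m-1}=0$. By the maximum formula (Lemma \ref{lem:43} with $k=1$, on the regular part of $V$) the current $dd^c\max\{\log|p|,c\}$ is carried by the level set $\{|p|=e^c\}$, which for $c\le0$ lies inside $\{|p|\le1\}$. In the Noether presentation the region $\{\|z\|>R\}\cap V$ forces $\|x\|$ to be large by \eqref{eqn:yx}; decomposing an end of $V$ according to which of $x_1,\dots,x_m$ go to infinity, the ``single-coordinate'' ends, on which $x_k$ stays bounded for all $k\neq j$, are contained in $K_j$. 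On a compact $K_j$ there is nothing to control; on an unbounded $K_j$ the hypothesis supplies a constant $C_j$ with $\log|p|\ge-C_j$ on $K_j$ outside a large ball, so for $c<-C_j$ the set $\{|p|=e^c\}$ avoids the unbounded part of $K_j$ entirely. Hence for $c$ sufficiently negative the support of $dd^c\max\{\log|p|,c\}\wedge(dd^cv)^{m-1}$ meets $V\setminus\{\|z\|\le R_0\}$ (a fixed large $R_0$) only in a tubular neighborhood of $\{p=0\}$, where its mass is dominated by that of $[\{p=0\}]\wedge(dd^cv)^{m-1}$; the latter is a closed positive current of finite total mass because $v\in\calL^+(V)$, so its tail outside $\{\|z\|\le R\}$ tends to $0$ as $R\to\infty$. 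This gives $\int_V dd^c\log|p|\wedge(dd^cv)^{m-1}\ge I$, hence equality.

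The main obstacle is exactly this tightness step: one needs genuine geometric control of the shrinking level sets $\{|p|=e^c\}$ near the ends of $V$ and a clean bookkeeping of ends, the subtle point being that ``multi-coordinate'' ends carry only mass already lying near $\{p=0\}$ (whose $(dd^cv)^{m-1}$-mass is summable), while the dangerous ``single-coordinate'' ends are precisely the ones the hypothesis on the $K_j$ neutralizes. An alternative route would mirror the proof of Proposition \ref{prop:45}: build functions in $\calL^+$ of the form $\max\{\tfrac1{\deg_V(p)}\log|p|,\log|x_{k_1}|,\dots,\log|x_{k_{m-1}}|,c\}$, compute their Monge--Amp\`ere measures by Lemma \ref{lem:43}, use the $K_j$ hypothesis in place of \eqref{eqn:44} to keep the relevant truncated sets compact, and reduce to the base $\CC^m$ over the $\deg(V)$ sheets of the branched cover as in Theorem \ref{thm:L} --- the delicate point there being that when $p$ genuinely involves $y$ one cannot simply descend $\log|p|$ to $\CC^m$, so some extra care on $V$ itself is required. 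In either approach the whole difficulty is concentrated in ruling out leakage of mass at infinity, which is exactly the role of the hypothesis on the $K_j$.
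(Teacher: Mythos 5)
Your opening reduction is fine: using Corollary \ref{cor:L2} with $S=(dd^cv)^{m-1}$ and $w=v$ to show that $I:=\int_V dd^c\max\{\log|p|,c\}\wedge(dd^cv)^{m-1}$ does not depend on $c$ (which also disposes of the final clause), and the inequality $\int_V dd^c\log|p|\wedge(dd^cv)^{m-1}\le I$ via monotone convergence and lower semicontinuity of mass, are both correct. The gap is the reverse inequality, which you rightly call the heart of the matter but do not actually prove. Weak-$*$ convergence gives no upper bound on the mass of the approximating currents near infinity, and your control of the ``multi-coordinate ends'' --- that the mass of $dd^c\max\{\log|p|,c\}\wedge(dd^cv)^{m-1}$ out there is ``dominated by that of $[\{p=0\}]\wedge(dd^cv)^{m-1}$'' --- is precisely the no-escape-of-mass statement you are trying to establish, asserted rather than derived; without a hypothesis it is simply false, as the example immediately after the proposition shows ($p=x-y$ on $x^2-y^2=1$, $m=1$: the level circles $\{|p|=e^c\}$ carry mass $2\pi$ off to infinity while $[\{p=0\}]$ is the zero current on $V$). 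Your treatment of the ``single-coordinate ends'' also conflates ``$x_k$ bounded for $k\ne j$'' with membership in $K_j$: the hypothesis only constrains the region where $|x_k|\le 1$ for all $k\ne j$ (and $|p|\le 1$), so an end along which the other coordinates stay of modulus, say, $3/2$ is not covered, and for an arbitrary $v\in\calL^+(V)$ the current $(dd^cv)^{m-1}$ may well charge such regions. So the tightness step, as written, does not go through for a general $v$.

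The paper sidesteps all of this by choosing $v$ rather than fighting an arbitrary one: with $v_0=\max\{\log|x_1|,\ldots,\log|x_m|,0\}$, Proposition \ref{lem:43} shows $(dd^cv_0)^{m-1}$ is carried by $\bigcup_j\{|x_k|=1 \hbox{ for all } k\ne j\}$, so both $dd^c\log^+|p|\wedge(dd^cv_0)^{m-1}$ and $dd^c\log|p|\wedge(dd^cv_0)^{m-1}$ are supported in $\bigcup_j K_j$, compact under the first alternative; a cut-off $\varphi\equiv 1$ there and one integration by parts (using that $\log^+|p|=\log|p|$ on $\supp(dd^c\varphi)\cap\supp\bigl((dd^cv_0)^{m-1}\bigr)$, since off $K$ the support of $(dd^cv_0)^{m-1}$ forces $|p|>1$) identifies the two integrals, with no limit in $c$ and no mass-escape analysis. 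The unbounded-$K_j$ alternative is reduced to the compact case by replacing $p$ with $e^{-M}p$ and invoking Corollary \ref{cor:L2}, and dependence on the choice of $v$ is then handled by the comparison results of Section \ref{sec:comparison}. To salvage your route you should either carry out in full the alternative you only sketch (mirroring Proposition \ref{prop:45}, with the $K_j$ hypothesis playing the role of (\ref{eqn:44})), or prove the identity for $v_0$ as above and transfer to general $v$ afterwards; a direct tightness argument for arbitrary $v$ would require genuinely new estimates that your proposal does not supply.
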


\begin{proof}
Let us take the function $v:=\max\{\log|x_1|,\ldots,\log|x_m|,0\}\in\calL^+(V)$.    
Suppose the first condition holds.  Let $K:=\bigcup_{j=1}^m K_j$.  By Proposition \ref{lem:43}, the support of the current $dd^c\log^+|p|\wedge(dd^cv)^{m-1}$ is contained in 
	$K$. 
	
	Let $\varphi$ be a smooth, compactly supported function on $V$ such that $\varphi|_K\equiv 1$.  Then $dd^c\varphi|_K\equiv 0$, so that  
$$ 
 \int_V \log^+|p|dd^c\varphi\wedge(dd^cv)^{m-1} = \int_V \log|p|dd^c\varphi\wedge(dd^cv)^{m-1}.
$$
This is the same as $\int_V \varphi dd^c\log^+|p|\wedge(dd^cv)^{m-1} 
 =  \int_V \varphi dd^c\log|p|\wedge(dd^cv)^{m-1}$, and the result follows since we may replace $\varphi$ by $1$.

If $\log|p(z)|$ is $O(1)$ on $K_j$, then 
$$ \liminf_{\substack{|z|\to\infty\\ z\in K_j}} \log|p|\geq M+1$$ for some $M\in(-\infty,-1]$, so the set 
$$K_{j,M}:=\{z\in V: |p(x,y)|\leq e^{M}, |x_k|\leq 1 \hbox{ for all } k\neq j\}$$
is compact.  Hence $e^{-M}p$ satisfies the previous condition, so 
$$
\int_V dd^c\log|e^{-M}p|\wedge(dd^cv)^{m-1} = \int_V dd^c\log^+|e^{-M}p|\wedge(dd^cv)^{m-1}.
$$
We want to replace $e^{-M}p$ with $p$ on both sides.  We may do this on the left-hand side because
$$dd^c\log|e^{-M}p| = dd^c(\log|p| - M) = dd^c\log|p|.$$
On the right-hand side, $\log^+|e^{-M}p| - \log^+|p|=O(1)$, so we may apply Corollary \ref{cor:L2}.
\end{proof}	

\begin{example}\rm
Let $p(x,y) = x-y$ on the quadratic curve in $\CC^2$ given by $x^2-y^2=1$.  Then  $p(x,y)\to 0$ as $(x,y)$ tends to infinity along the curve in the direction of the asymptote $y=x$.  The set $K$ in the above proof is $\{|x-y|\leq 1\}$, which is unbounded, and $\log|p|\to -\infty$ on $K$ as $|x|\to\infty$.  So neither condition holds for $p$.  

The conclusion of the proposition also fails, by a calculation.  Write $w=x-y$, $v=x+y$; then $V$ is given by the equation $wv=1$.  We have
$$
\int_V dd^c\log^+|p| = \int_V dd^c\log^+|w| = \int_{\CC\setminus\{0\}} dd^c\log^+|w| = 2\pi.
$$
On the other hand, using the fact that $dd^c\log|w|=0$ on a neighborhood of $V\cap\{|w|\in(\tfrac{1}{2},2)\}$, we have
\begin{eqnarray*}
\int_V dd^c\log|p| &=& \int_{V\cap\{|w|<2\}} dd^c\log|w| \ + \  \int_{V\cap\{|w|>\tfrac{1}{2}\}} dd^c\log|w| \\
&=& \int_{V\cap\{|w|<2\}} dd^c\log|w| \  - \   \int_{V\cap\{|v|<2\}} dd^c\log|v|  \ = \  0.
\end{eqnarray*}
\end{example}

We close this section with the following result, used in the last section.

\begin{proposition}\label{prop:moving}
Let $V=\bV(p_1,p_2,\ldots,p_k)$ be an irreducible affine variety in $\CC^N$ of dimension $m=N-k$. Define $W\subset\CC^{N+1}$ by 
$$W:=\{(t,z)\in\CC^{N+1}\colon p_1(z)-t=p_2(z)=\cdots=p_k(z)=0\}=   \bV(p_1-t,p_2,\ldots,p_k),$$  
and define $V_t:=\bV(p_1-t,p_2,\ldots,p_k)$ when $t$ is fixed.  If $t$ is sufficiently small then $V_t$ is irreducible.  Moreover,  
$\deg(W)=\deg(V_t)=\deg(V),$ and  $$\deg_{\calL(W)}(p)=\deg_{\calL(V)}(p) = \deg_{\calL(V_t)}(p)$$ 
for any polynomial $p$ that does not depend on $t$.
\end{proposition}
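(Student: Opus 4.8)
The plan is to trade the statement for two soft facts: replacing $p_1$ by $p_1-t$ does not change the leading homogeneous forms $\hat p_1,\dots,\hat p_k$, so $V$, the fibres $V_t$, and the fibres of $W\to\CC_t$ all have the same behaviour at infinity; and the Lelong degree, being an integral built from $dd^c$, detects only that behaviour. First I would fix a Noether presentation $z=(x,y)$, $x\in\CC^m$, of $V=V_0$ (taking $\deg p_1\ge 2$; the cases $\deg p_1\le 1$ being degenerate or reducing $N$). Re-running Theorem~\ref{thm:noether} and the derivation of \eqref{eqn:yx} with $\{p_1-t,p_2,\dots,p_k\}$ in place of $\{p_1,\dots,p_k\}$ — the constants there being controlled by the leading forms together with bounds on the lower-order coefficients, which are uniform for $|t|\le 1$ — produces one constant $A$ with $\|y\|\le A(1+\|x\|)$ on every $V_t$ with $|t|\le 1$ and on $W$ (to which the polynomial map $(t,z)\mapsto z$, $t=p_1(z)$, is an isomorphism onto $\bV(p_2,\dots,p_k)$). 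Hence $(x,y)$ is a Noether presentation of each such $V_t$ and $(t,x,y)\mapsto(t,x)$ is one of $W$; in particular $\log^+\|x\|\in\calL^+(V_t)$ and $\log^+\|(t,x)\|\in\calL^+(W)$ by Proposition~\ref{prop:L}(1), and upper semicontinuity of fibre dimension for $W\to\CC_t$ (with $\dim V_0=m$) shows every $V_t$ with $t$ small is of pure dimension $m$.

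Next the degree equalities. Since each degree is the number of points in a generic fibre of the projection to the base of a Noether presentation, I would fix a generic $x_0\in\CC^m$ and note that $C_{x_0}:=\{y\in\CC^k:p_2(x_0,y)=\dots=p_k(x_0,y)=0\}$ maps to $\CC_t$ by $y\mapsto p_1(x_0,y)$, with fibre over $t$ equal to the fibre of $V_t$ over $x_0$; the uniform bound makes this map proper over $\{|t|<1\}$, so its fibre count (with multiplicity) is constant there and equals $\deg(V)$ because $t=0$ is a regular value for generic $x_0$. Letting $x_0$ vary, this gives $\deg(W)=\deg(V)$; that $\deg(V_t)=\deg(W)$ for $t$ near $0$ then follows because the set of $t$ for which $\{t\}\times\CC^m$ lies in the branch locus of $W\to\CC^{m+1}$ is closed and excludes $t=0$. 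The identical argument applies to any family $\bV(q_1-t,q_2,\dots,q_r)\subset\CC^n$ whose member at $t=0$ is of pure dimension $n-r$ (tracking multiplicities along components), and I would record this for reuse.

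The hard part will be showing $V_t$ is irreducible for small $t$. The approach is the usual one: because $V_0$ is irreducible, a Bertini-type irreducibility theorem for the family $W\to\CC_t$ — applied to the component of $W$ carrying $V\times\{0\}$, and using that the family is generically reduced near $t=0$ — makes $V_t$ irreducible for all $t$ outside a finite set, and the hypothesis on $V_0$ places $t=0$ outside that set. I expect the genuine obstacle here to be the verification that $t=0$ is not an exceptional value (this is where one must actually use that $p_1,\dots,p_k$ cut out $V$ with its reduced structure near $t=0$, e.g.\ that $\langle p_1,\dots,p_k\rangle$ is not merely a power of $\bI(V)$), together with the bookkeeping of which component of $W=\bV(p_1-t,p_2,\dots,p_k)$ to single out when $\bV(p_2,\dots,p_k)$ happens to be reducible.

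Finally the Lelong degrees. Fix $p$ not involving $t$ with $p\notin\bI(V)$ (so $p\notin\bI(V_t)$ for $t$ small). On $V_t$ the Lelong--Poincar\'e formula gives $dd^c\log|p|=2\pi[\{p=0\}\cap V_t]$, so with $v=\log^+\|x\|\in\calL^+(V_t)$,
\[
\deg(V)\,(2\pi)^m\,\deg_{\calL(V_t)}(p)=2\pi\int_{\{p=0\}\cap V_t}(dd^c\log^+\|x\|)^{m-1}.
\]
Choosing, by one more generic linear change in $x$, $(x_1,\dots,x_{m-1})$ to be a Noether base for $\{p=0\}\cap V_0$ — hence, by the leading-form remark, for $\{p=0\}\cap V_t$ for all small $t$ — one has $\log^+\|x\|-\log^+\|(x_1,\dots,x_{m-1})\|=O(1)$ there, so Corollary~\ref{cor:L} and Theorem~\ref{thm:L} identify the integral with $\deg(\{p=0\}\cap V_t)(2\pi)^{m-1}$ (components counted with their $p$-multiplicities). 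Thus $\deg_{\calL(V_t)}(p)=\deg(\{p=0\}\cap V_t)/\deg(V)$, and the reusable degree statement, applied to $\bV(p_1-t,p_2,\dots,p_k,p)$, shows the numerator is constant for $t$ near $0$; specialising to $t=0$ gives $\deg_{\calL(V_t)}(p)=\deg_{\calL(V)}(p)$. Running the same computation on $W$ with $v_W=\log^+\|(t,x)\|$ turns the integral into $\deg(W')(2\pi)^m$ with $W'=\{p=0\}\cap W=\bV(p_1-t,p_2,\dots,p_k,p)\subset\CC^{N+1}$, which has $(t,x_1,\dots,x_{m-1})$ for a Noether base, so $\deg(W')=\deg(\{p=0\}\cap V_0)$ and therefore $\deg_{\calL(W)}(p)=\deg(W')/\deg(W)=\deg_{\calL(V)}(p)$, completing the plan.
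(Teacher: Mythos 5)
Your treatment of the Lelong-degree equalities is a genuinely different route from the paper's. You slice by $\{p=0\}$: Lelong--Poincar\'e turns $\deg_{\calL(V_t)}(p)$ into the ratio $\deg(\{p=0\}\cap V_t)/\deg(V_t)$ (this is essentially the computation in the paper's Theorem 6.1, so it is consistent with its toolkit; note the extra rotation making $(x_1,\ldots,x_{m-1})$ a Noether base for $\{p=0\}\cap V_t$ is not needed, since the two-sided bounds defining $\calL^+(V_t)$ restrict automatically to any subvariety), and you then need these degrees to be constant in $t$. The paper instead slices by $\{t=t_0\}$: it pairs $dd^c\log|p|$ on $W$ with $(dd^cw_\epsilon)^m = dd^c\tau_\epsilon\wedge(dd^cv_\epsilon)^{m-1}$, uses Corollary \ref{cor:L2} to replace $\tau_\epsilon$ by $\tau_\delta$, and lets $\delta\to 0$ so that $dd^c\tau_\delta\to 2\pi[\{t=t_0\}]$ collapses the integral over $W$ onto $V_{t_0}$. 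That argument needs no degree bookkeeping in $t$ at all; your argument transfers the whole burden onto constancy-in-$t$ of degrees, which is exactly where your attempt has a gap.

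The gap is the irreducibility of $V_t$ together with the ``regular value'' step in your fibre count. There is no Bertini-type theorem asserting that an irreducible \emph{special} fibre forces nearby fibres to be irreducible, and the issue you half-identify is real: take $N=2$, $k=1$, $p_1=y^2$. Then $V=\bV(y^2)=\{y=0\}$ is irreducible of degree $1$, while $V_t=\{y=\pm\sqrt t\}$ is reducible of degree $2$ for every $t\neq 0$; in your curve $C_{x_0}$ the map $y\mapsto p_1(x_0,y)$ has $t=0$ as a critical value for \emph{every} $x_0$, so the fibre count with multiplicity ($=2$) does not equal $\deg(V)$ ($=1$). Thus the same hidden hypothesis (that $p_1,\ldots,p_k$ cut out $V$ generically reduced) underlies both your irreducibility step and your constancy claims for $\deg(V_t)$ and $\deg(\{p=0\}\cap V_t)$, and it must actually be invoked and used, not merely flagged; as written you leave the decisive verification open, so the proof is incomplete precisely where your route concentrates the difficulty. (For comparison, the paper's own proof simply asserts the Noether-presentation restriction, $\deg(V_t)=\deg(W)$, and irreducibility of $V_t$ for small $t$ without argument, and devotes its effort to the Lelong-degree identity via the slicing by $dd^c\tau_\delta$; so on these geometric points you are no worse off than the paper, but your Monge--Amp\`ere part, unlike the paper's, cannot stand without them.)
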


\begin{proof}
 Restricting to a hyperplane (corresponding to fixing $t$ in the original coordinates), we obtain a Noether presentation for $V_t$ (or $V$ if $t=0$), and clearly $\deg(V_t)=\deg(W)$.  If $V$ is irreducible then so is $V_t$ for $t$ sufficiently small.
  
  Consider now, for $\epsilon>0$, the functions in $\calL^+(W)$ given by $$w_{\epsilon}(t,z):=\max\{\log|t|,\log|x_1|,\ldots,\log|x_m|,\log \epsilon\},$$ where $x$ are the independent variables in a Noether presentation of $V$, and let $v_{\epsilon}(z):=\max\{\log|x_1|,\ldots,\log|x_m|,\log \epsilon\}$. Then a calculation as in  
(\ref{eqn:37}) yields
\begin{eqnarray*}
(dd^cw_{\epsilon})^m &=& [\{|t|=\epsilon\}]\wedge d\theta(t)\wedge(dd^cv_{\epsilon})^{m-1}  \\
&=& dd^c(\max\{\log|t|,\log \epsilon\})\wedge (dd^cv_{\epsilon})^{m-1}. 
\end{eqnarray*}
 Considering $p\in\CC[V]$ as a polynomial in $\CC[W]$ which is independent of $t$, and writing $\tau_{\epsilon}=\max\{\log|t|,\log\epsilon\}$, we have 
\begin{eqnarray}
(2\pi)^{m+1}{\deg(W)}\deg_{\calL(W)}(p) &=& \int_W dd^c\log|p|\wedge (dd^cw_{\epsilon})^{m} \nonumber \\
&=& \int_{W} dd^c\log|p|
 \wedge (dd^cv_{\epsilon})^{m-1} \wedge dd^c\tau_{\epsilon}.  \label{eqn:47a}  
\end{eqnarray}
By Corollary \ref{cor:L2}, we can replace $\tau_{\epsilon}(z)$ on the right-hand side of (\ref{eqn:47a}) by $\tau_{\delta}(z)$ for any $\delta>0$, since $\tau_{\epsilon}-\tau_{\delta}=O(1)$.  Then
\begin{eqnarray*}
(2\pi)^{m+1}{\deg(W)}\deg_{\calL(W)}(p) &=& \int_{W} dd^c\log|p|
 \wedge (dd^cv_{\epsilon})^{m-1} \wedge dd^c\tau_{\delta} \\
&=& 2\pi\int_{V} dd^c\log|p|\wedge (dd^cv_{\epsilon} )^{m-1} \\
&=& (2\pi)^{m+1}\deg(V)\deg_{\calL(V)}(p)
\end{eqnarray*}
upon letting $\delta\to 0$ and using Monge-Amp\`ere convergence.  We showed above that $\deg(W)=\deg(V)$, hence $\deg_{\calL(W)}(p)=\deg_{\calL(V)}(p)$.
By the same type of argument, $\deg_{\calL(W)}(p)=\deg_{\calL(V_t)}(p)$ for any fixed $t\in\CC$.
\end{proof}

\section{Curves in $\CC^2$} \label{sec:curves}
The Lelong degree on an algebraic curve in $\CC^2$ may be computed from Puiseux series.  Let $V=\{(x,y)\in\CC^2\colon P(x,y)=0\}$, and suppose $(x_0,y_0)\in V$. Then by the theory of Puiseux series, there is a neighborhood $\Omega$ of $(x_0,y_0)$ in $\CC^2$ such that any point $(x,y)\in\Omega\cap V$  is given by a Puiseux series about $x_0$:
$$
y = \sum_{j=j_0}^{\infty} a_j (x-x_0)^{j/d}, \  \hbox{ where $j_0\in\ZZ$ and $d\in\NN$}.
$$
We apply Puiseux series (more precisely, the lowest term)  as follows.
\begin{proposition}
Suppose $V\subset\CC^2$ is an algebraic curve, with Noether presentation $(x,y)$.  There exists a $\gamma\in(-\infty,1]\cap\QQ$ such that if $R>0$ is sufficiently large, then
\begin{equation}\label{eqn:p1}
y=x^{\gamma}(c + o(|x|^{\gamma}))   \quad \hbox{as } |x|\to\infty, 
\end{equation}
and we take the limit along a continuous path in which $(x,y)\in V$ and $|x|>R$.  As a consequence,
$
\log|y| = \gamma\log|x| + O(1)$ 
along such a path.
\end{proposition}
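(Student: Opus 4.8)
The plan is to extract $\gamma$ from the lowest-order term of a Puiseux expansion of $V$ at infinity, using the Noether bound (\ref{eqn:yx}) to locate that exponent in $(-\infty,1]$.

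Since $(x,y)$ is a Noether presentation, $V=\bV(g_1)$ with $g_1(x,y)=y^{d}-\sum_{k=0}^{d-1}g_{1k}(x)\,y^{k}$ monic of degree $d$ in $y$ and $\deg g_{1k}\le d-k$. Viewed as a polynomial in $y$ over the field of Puiseux series in $x^{-1}$ (the algebraic closure of $\CC((x^{-1}))$), the classical Newton--Puiseux theorem factors it as $g_1(x,y)=\prod_{i=1}^{d}\bigl(y-\phi_i(x)\bigr)$, where each $\phi_i(x)=\sum_{j\ge j_0^{(i)}}a_j^{(i)}\,x^{-j/n_i}$ is a convergent Puiseux series; hence $\phi_i(x)=a^{(i)}x^{\gamma_i}\bigl(1+o(1)\bigr)$ near infinity, with $\gamma_i=-j_0^{(i)}/n_i\in\QQ$ and $a^{(i)}\ne 0$ (the alternative $\phi_i\equiv 0$ would force the line $\{y=0\}$ into the irreducible curve $V$, a case we exclude since there $\log|y|\equiv-\infty$). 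For $R$ larger than the moduli of the finitely many singular points of $V$ and branch points of the projection $(x,y)\mapsto x$, the graphs of the $\phi_i$ cover $V\cap\{|x|>R\}$, and the $\phi_i$ in a single monodromy cycle --- i.e.\ those indexing one connected component of $V\cap\{|x|>R\}$, equivalently one place of $V$ at infinity --- share one leading exponent and leading coefficients of a common modulus.

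Next I would invoke the Noether bound: for $|x|>R$ the point $(x,\phi_i(x))$ lies on $V$, so $|a^{(i)}|\,|x|^{\gamma_i}\bigl(1+o(1)\bigr)=|\phi_i(x)|\le A(1+|x|)$, which forces $\gamma_i\le 1$. Thus every leading exponent lies in $(-\infty,1]\cap\QQ$. A continuous path in $V$ along which $|x|\to\infty$ and $|x|>R$ throughout lies in a single connected component of $V\cap\{|x|>R\}$; taking $\gamma$ to be the leading exponent of that component, we get $y=x^{\gamma}\bigl(c+o(1)\bigr)$ along the path, where $c$ ranges over finitely many nonzero values, all of modulus $|c|$. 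Taking moduli and then logarithms, $\log|y|=\gamma\log|x|+\log\bigl|c+o(1)\bigr|=\gamma\log|x|+O(1)$, which is the stated consequence.

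The one thing that needs care is the bookkeeping connecting a continuous escape path in $V$ with a single Puiseux place at infinity: that $V\cap\{|x|>R\}$ has finitely many connected components once $R$ is large, that each is the image of a single monodromy cycle of Puiseux branches, and that the branches in a cycle have a common leading term up to a root of unity. This, together with the Newton--Puiseux theorem and the trivial exclusion of $V=\{y=0\}$, is standard; the substantive input is only that the Noether estimate (\ref{eqn:yx}) caps the leading exponent at $1$.
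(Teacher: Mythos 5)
Your argument is correct and is essentially the paper's own proof: both rest on a Puiseux expansion of $y$ at infinity (the paper obtains it by the substitution $(s,t)=(1/x,y/x)$ and expanding at $s=0$, you by factoring the monic defining polynomial over the Puiseux series field in $x^{-1}$), with the Noether estimate (\ref{eqn:yx}) supplying the cap $\gamma\le 1$. Your explicit handling of the monodromy-cycle/connected-component bookkeeping and of the degenerate case $V=\{y=0\}$ only makes precise what the paper treats informally in its final sentence.
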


\begin{proof}
Let $V=\{P(x,y)=0\}$.  Under the change of coordinates $$(x,y)\mapsto (1/x,y/x)=:(s,t),$$  for each $(x,y)$ with $x\neq 0$ we have $P(x,y)=0$ if and only if $\tilde P(s,t)=0$, where $\tilde P$ is the polynomial obtained by replacing each term $a_{jk}x^{j}y^k$ with $a_{jk}s^{j'}t^k$, where $j'=\deg(P)-j-k$.   

Consider a Puiseux series at the origin that gives $t$ in terms of $s$ when $\tilde P(s,t)=0$:
$$
t  \ = \  \sum_{j=j_0}^{\infty} a_js^{j/d} \  = \   a_{j_0}s^{j_0/d} + o(|s|^{j_0/d}) \  \hbox{ as } |s|\to 0;
$$
changing back to affine coordinates and multiplying through by $x$ yields 
$$
y =  a_{j_0}x^{1-j_0/d} + o(|x|^{1-j_0/d}) \ \hbox{ as } |x|\to\infty.
$$
For sufficiently large $|x|$, this says that $|y|>\tfrac{1}{2}|a_{j_0}||x|^{1-j_0/d}$.  Since $(x,y)$ is a Noether presentation, the estimate in (\ref{eqn:yx}) must hold, so $1-j_0/d\leq 1$.  
We set $\gamma:=1-j_0/d$.  The Puiseux series in $s$ is valid on an open set for which the origin is a limit point. A continuous path in this open set with $s\to 0$ corresponds to a continuous path in $x$ with $|x|\to\infty$.   
\end{proof}

The value of $\gamma$ in the above proposition is easy to read off from the Newton polygon associated to $\tilde P(s,t)$.\footnote{It is the first step in an iterative algorithm for computing the terms of a Puiseux series.  See e.g. \cite{willisdidiersonnaburg:how} for a description.}
We illustrate with the following example.

\begin{example}\label{ex:52} \rm
Let $V\subset\CC^2$ have defining polynomial $$P(x,y)=y^4 + xy^3+xy^2+x^2y+x^2+1.$$
The associated polynomial is $\tilde P(s,t)=t^4+t^3+st^2+st+s^2+s^4$.  

\bigskip

\begin{figure}

\begin{multicols}{2}

\includegraphics[scale=0.8]{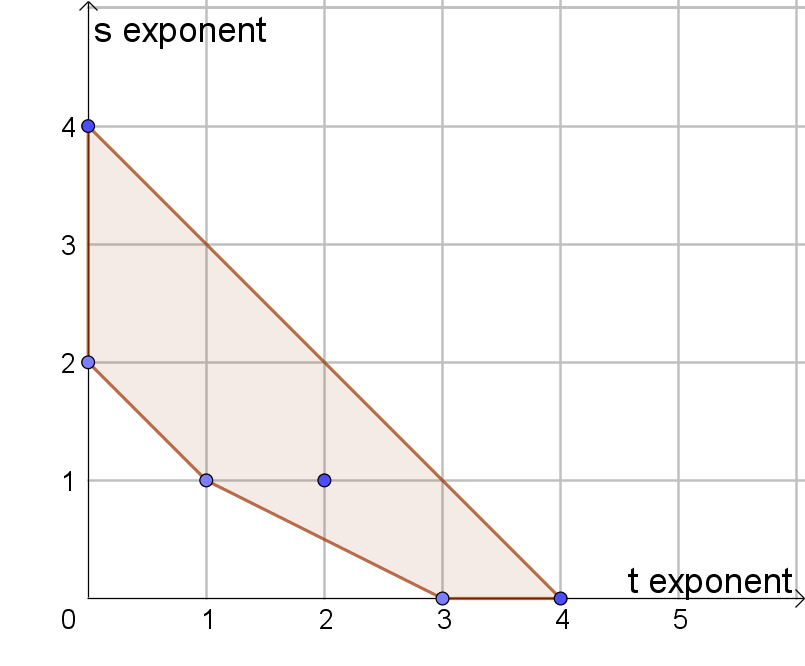}

\includegraphics[scale=0.8]{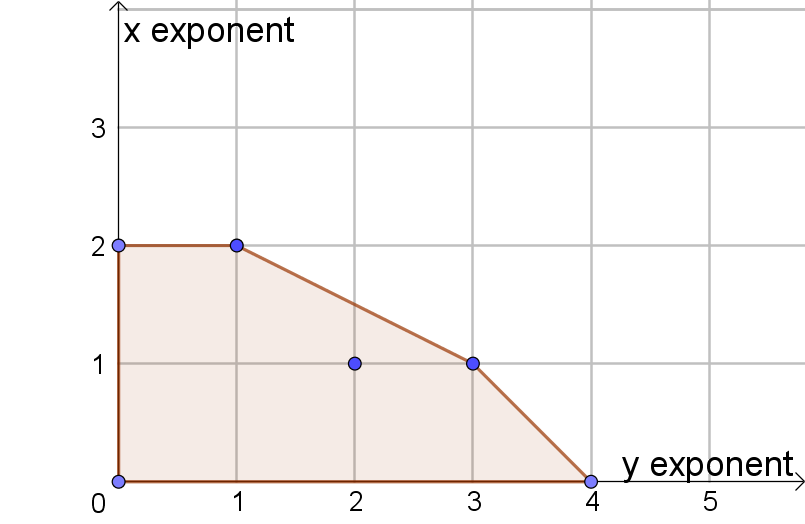}

\caption{Newton polygons of $P$ (above) and $\tilde P$ (left) in Example \ref{ex:52}.}

\end{multicols}
\end{figure}

Write a Puiseux series for $t$ in terms of $s$ as $t = \sum_{k=1}^{\infty} c_ks^{\gamma_k}$.  Then
the lowest power $\gamma_1$ is the negative of a slope of a so-called \emph{lower segment} of the boundary of the Newton polygon (i.e. any downward translate of a lower segment gives a line segment that does not intersect the polygon).

\bigskip

In our case, the polygon $N$ has three lower segments  (see Figure 1, left). 

\begin{enumerate}
\item Line segment joining $(3,0)$ and $(4,0)$, with slope $0$.  Here $\gamma_1=0$, and we write the Puiseux series as 
$t = c_1 + t_1(s) = c_1 + c_2s^{\gamma_2}+\cdots$.  Put this into the equation for the curve (write $c:=c_1$ for convenience):
\begin{eqnarray*} 
0=P(s,c+t_1(s)) &=& (c+t_1(s))^4+(c+t_1(s))^3+s(c+t_1(s))^2+s^2+s^4 \\
&=& (c^4 + c^3) + \hbox{ (higher terms in $s$)}.
\end{eqnarray*}
Equating coefficients in the constant term, we obtain $0=c^4+c^3$ which is a nonzero coefficient when $c=-1$.  Hence $t=-1+o(1)$ as $s\to 0$, which becomes 
$$
y=-x+o(|x|) \quad \hbox{as } |x|\to\infty.
$$

\item Line segment joining $(3,0)$ and $(1,1)$.  Here $\gamma_1=\tfrac{1}{2}$, so that $t=cs^{1/2}+ t_1(s)$.  Equating the lowest nonzero coefficient in the Puiseux series equation for $P(s,t)=0$, we obtain $0=t^{3/2}(c^3+c)$ so that $c=\pm i$.  Hence we obtain two more series: $t=is^{1/2}+ o(|s|^{1/2})$, and $t=-is^{1/2} +o(|s|^{1/2})$, giving
$$
y=ix^{1/2}+o(|x|^{1/2}) \hbox{ or } y=-ix^{1/2}+ o(|x|^{1/2}) \quad \hbox{as }|x|\to\infty.
$$

\item Line segment joining $(1,1)$ and $(0,2)$.  Here $\gamma_1=1$, so that $t=cs + t_1(s)$.  Solving for the lowest nonzero coefficient in $P(s,t)=0$, we obtain $t^2(c+1)=0$, so that $c=-1$. At the end we obtain
$$
y=-1+o(1) \quad\hbox{as }|x|\to\infty.
$$
\end{enumerate}
\end{example}

For this example, $(x,y)$ is a Noether presentation because $P(x,y)$ is of the form (\ref{eqn:gj}), and $\deg(V)=4$ since generically there are $4$ solutions in $y$ for fixed $x$.  The four branches of $V$ over $\CC$ correspond to the four Puiseux series. Write this as 
$$V = V_1 \cup V_{\frac{1}{2},1} \cup V_{\frac{1}{2},2} \cup V_0,$$
where the (first) subscript corresponds to the exponent $\gamma$ in (\ref{eqn:p1}).  

 For points $x$ where $|x|>R$ and $R$ is sufficiently large, the branches associated to Puiseux series with different values of $\gamma$ are disjoint, because the corresponding $y$ values are of a different order of magnitude. In particular, $V_1$ and $V_0$ are isolated branches for large $|x|$.  
The two branches $V_{\frac{1}{2},1}, V_{\frac{1}{2},2}$ have $y$ values of the same order of magnitude for large $|x|$.  Write $V_{\frac{1}{2}}:= V_{\frac{1}{2},1}\cup V_{\frac{1}{2},2}$; then for large $|x|$, $V_{\frac{1}{2}}$ is approximated up to $o(|x|^{1/2})$ by the curve 
$y^2+x=0$, obtained by discarding the lower order terms in both series.  We will denote this curve by $W_{\frac{1}{2}}$.  See Figure 2.

\begin{figure}
\begin{multicols}{2}
\includegraphics[height=6cm]{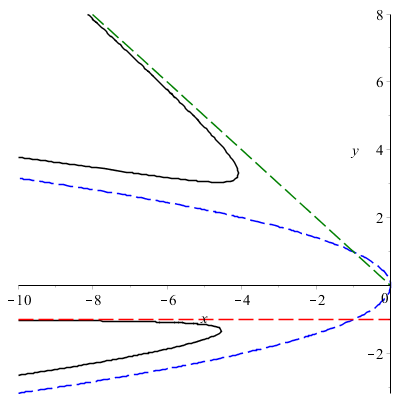}
\caption{  The real points of $V$ are given by the two solid curves.  The three dashed curves approximate $V_0,V_{\tfrac{1}{2}},V_{1}$.   The next terms in the series for $\gamma=0$ and $\gamma=1$ are $o(1)$, so the dashed lines approximating $V_0,V_1$   actually converge to these branches.  For $\gamma=\frac{1}{2}$, the next term in the Puiseux series for each branch of $V_{\frac{1}{2}}$ is $O(|x|^{1/4})$.  In this case, the branches of $W_{\frac{1}{2}}$  follow $V_{\frac{1}{2}}$ but separate from it very slowly.    Plotted with Maple 18.}
\end{multicols}
\end{figure}

Let $V(R):=V\cap\{|x|>R\}$, and similarly for other sets.  We evaluate $\log|y|$ on these three pieces of $V(R)$:
$$
\log|y| = \left\{\begin{array}{rlcrl}
\log |-x+o(x)| &\hbox{on } V_1(R)  & = &  \log|x|+O(1) &\hbox{on } \CC(R) \\
\log|-1 + o(1)| &\hbox{on } V_0(R) & = & O(1) &\hbox{on } \CC(R)  \\
\log|y| &\hbox{on } V_{\frac{1}{2}}(R) & = &  \log|\tilde y| + O(1) &\hbox{on } W_{\frac{1}{2}}(R)
\end{array} \right.  .
$$
Here, $\tilde y$ is just given by the closest value on the approximating curve: if $(x,y)\in V_{\frac{1}{2}}$ then 
$(x,\tilde y)\in W_{\frac{1}{2}}$.

We now compute the Lelong degree of $y$. 
Clearly $V\setminus V(R)$ is compact, by (\ref{eqn:yx}).  Set 
$C>\sup\{|y|:  (x,y)\in V\setminus V(R)\}$, so that $dd^c\max\{\log|y|, \log C\}$ is supported on the set $\{|y|= C\}\subset V(R)$.  Then 
\begin{eqnarray*}
\int_V dd^c\max\{\log|y|,C\} &=& \int_{V(R)} dd^c\max\{\log|y|,C\} \\
&=& \int_{V_1(R)} dd^c\max\{\log|y|,C\} + \int_{V_0(R)} dd^c\max\{\log|y|,C\} \\
&& \hskip3cm + \int_{V_{\frac{1}{2}}(R)} dd^c\max\{\log|y|,C\} \\
&=& \int_{\CC(R)} dd^c\max\{\log|x|+O(1),\log C\} + \int_{\CC(R)} dd^c \log C \\
&& \hskip2cm + \int_{W_{\frac{1}{2}}(R)} dd^c\max\{\log|y|+O(1),\log C\} \\
&=& 2\pi \ +   \   0  \ + 2\pi \ = \ 4\pi,
\end{eqnarray*}
where we use the fact that $\log|y|=\tfrac{1}{2}\log|x|$ on $W_{\frac{1}{2}}$.  
Hence $\displaystyle\deg_{\calL(V)}(y) = \frac{4\pi}{4(2\pi)} = \tfrac{1}{2}$.

\begin{remark} \label{rmk:lelong} \rm
\begin{enumerate}
\item The end result of the computation is to take the average growth of $\log|y|$ over all four branches:
\begin{equation}\label{eqn:avg}
\deg_{\calL(V)}(y) = \tfrac{1}{4}(1+\tfrac{1}{2}+\tfrac{1}{2}+ 0) = \tfrac{1}{2}.
\end{equation}
\item The additional condition of Proposition \ref{prop:balayage} was needed to simplify the computation of the integral, replacing $\log|y|$ by $\max\{\log|y|,C\}$ and projecting to $\CC^m$. The condition is easily seen to hold if all Puiseux series exponents are nonnegative. If one of the exponents is negative then the Monge-Amp\`ere computation fails; however, the averaging formula (\ref{eqn:avg}) should still hold with the appropriate sign.  For example, $y^3-x^2y+x+1=0$ is a degree 3 curve with the 3 series
$$
y=x+\cdots,\quad y= -x + \cdots,\quad y= x^{-1} + \cdots,
$$
and we should have $\deg_{\calL(V)}(y) = \tfrac{1}{3}(1+1-1) = \tfrac{1}{3}$.

\item Clearly, the minimum nonnegative constant $d$ permitting the inequality $$\tfrac{1}{d}\log|p(z)|\leq \log^+|z|+A$$ for some $A\in\RR$ and all $z\in V$ is given by the maximum value of $\gamma$. (For Example \ref{ex:52},   $d=1$.)

\end{enumerate}
\end{remark}

\subsection{Lelong degree formula for $y$}

Let $V=\bV(P)\subset\CC^2$, where $P$ is an irreducible polynomial of the form 
\begin{equation}\label{eqn:52}
P(x,y)=y^{\deg P}+ q(x,y).\end{equation}
This condition ensures that $(x,y)$ is a Noether presentation.  Also, since $p$ is irreducible, $q$ contains a nonzero term in $x^k$, for some $0\leq k\leq \deg(P)$.  Let $d$ be the maximum such value.  Then
\begin{equation}\label{eqn:lelongdegree}
\deg_{\calL(V)}(y) = \frac{d}{\deg(P)}.
\end{equation}
In Example \ref{ex:52}, the highest power of $P$ in $x$ alone is  $x^2$.  Hence by the above formula,  $\deg_{\calL(V)}(y)=\tfrac{2}{4}=\tfrac{1}{2}$.

Let us describe how formula (\ref{eqn:lelongdegree}) arises, using Example \ref{ex:52} as an illustration.  First, the Newton polygon $N$ of $\tilde P(s,t)$ has 3 lower segments, the negative of whose slopes are $\gamma_1=0$, $\gamma_2=\frac{1}{2}$, and $\gamma_3=1$.  The corresponding growths of $y$ in terms of $x$ as $|x|\to\infty$ are then calculated to be 
$$
c_1:=1-\gamma_1=1,\ c_2:=1-\gamma_2=\tfrac{1}{2},\ c_3:=1-\gamma_3=0.
$$
Let us relate this to the Newton polygon of $P(x,y)$, which we denote by $N_0$.  The lower segments of $N$ correspond to \emph{upper} segments of $N_0$ (i.e., that separate from $N_0$ when translated up).  The values of $c_j$ for each $j$ are precisely the negatives of the slopes of the segments in $N_0$.  (Compare the two polygons in Figure 1.)

In general, if $c_j=\frac{m_j}{n_j}$ (in simplest form) as $|x|\to\infty$, we get an approximation to $y^{n_j}=ax^{m_j}$ (for some $a$), a curve which has $n_j$ branches over $x$. By an argument using continuity, the curve ought to be approximating $n_j$ such branches of $V$ (in Example \ref{ex:52}, two branches for $c_2=\tfrac{1}{2}$).  As every branch for $c_j$ is associated to a collection of $n_j$ branches given by an approximating curve, the total number of branches must be a integer multiple, say $q_jn_j$.  Altogether, $\deg(V)=\sum q_jn_j$.

 In view of Remark \ref{rmk:lelong}(1) and the previous paragraph, we obtain
$$
\deg_{\calL(V)}(y) = \frac{1}{\deg(V)}\sum q_jn_j\tfrac{m_j}{n_j} = \frac{1}{\deg(V)}{\sum q_jm_j} = \frac{\sum q_jm_j}{\sum q_jn_j}.
$$
Now $\sum q_jn_j=\deg(V)=\deg(P)$ is the horizontal length of the Newton polygon, and $m_j/n_j$ are the negatives of the slopes of the upper segments (indexed by $j$), which we can consider as forming the graph of a piecewise linear function on the interval $[0,\deg(P)]$.  Integrating the slopes over this interval gives $\sum q_jm_j$, which gives the total decrease in height of the function. Since the graph starts at $(0,d)$ and ends at $(\deg(P),0)$, we get $d$.

\section{A B{\'e}zout theorem for affine varieties}

Let $V\subset\CC^N$ be an irreducible affine variety of dimension $m\geq 1$ given by $N-m$ polynomials $p_1,p_2,\ldots,p_{N-m}\in\CC[z]$.  Then 
$$
V=:V_m\subset V_{m+1}\subset\cdots\subset V_{N-1}\subset V_N:=\CC^N
$$
where $V_j$ is given by the polynomials $p_1,\ldots,p_{N-j}$ for $j=m,\ldots,N-1$.  

\begin{theorem}
Suppose $V$ is irreducible.  Then 
$$
\deg(V) = \prod_{j=m}^{N} \deg_{\calL(V_{N-j})}(p).
$$
\end{theorem}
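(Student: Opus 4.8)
The plan is to isolate a single intersection step and then telescope along the flag. The heart of the matter is the following claim: if $W\subset\CC^N$ is an irreducible affine variety of dimension $k\ge 1$ and $q\in\CC[z]$ is a polynomial with $q\notin\bI(W)$ for which $W':=W\cap\bV(q)$ is again irreducible of dimension $k-1$ with $q$ reduced along it, then
\[
\deg(W')\;=\;\deg(W)\cdot\deg_{\calL(W)}(q).
\]
Granting this, the theorem is immediate: applying it with $W=V_{j+1}$, $q=p_{N-j}$, $W'=V_j$ for $j=m,\dots,N-1$ gives $\deg(V_j)=\deg(V_{j+1})\,\deg_{\calL(V_{j+1})}(p_{N-j})$, and multiplying these identities, cancelling the telescoping factors, and using $\deg(V_N)=\deg(\CC^N)=1$ yields $\deg(V)=\prod_{j=m}^{N-1}\deg_{\calL(V_{j+1})}(p_{N-j})$, i.e.\ the product over the flag. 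Here I use the standing assumption --- implicit in the hypothesis that the members of the flag are affine varieties --- that each $V_j$ is irreducible of dimension $j$; for a non-generic system this is arranged by a small perturbation, with Proposition~\ref{prop:moving} guaranteeing that no Lelong degree changes.

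To prove the claim I would work entirely with the currents set up in Sections~\ref{sec:comparison}--\ref{sec:degree}. Fix $v\in\calL^+(W)$. From $\log^+\|z\|+c\le v\le\log^+\|z\|+C$ on $W\supseteq W'$ it follows that the restriction $v|_{W'}$ is psh on $W'$ and lies in $\calL^+(W')$. By the definition of the Lelong degree,
\[
\deg_{\calL(W)}(q)=\frac{1}{\deg(W)\,(2\pi)^k}\int_W dd^c\log|q|\wedge(dd^cv)^{k-1}.
\]
Since $v$ is locally bounded, $T:=[W]\wedge(dd^cv)^{k-1}$ is a legitimate closed positive current, and the Lelong--Poincar\'e formula $dd^c\log|q|\wedge T=2\pi[\{q=0\}]\wedge T$ recalled in Section~\ref{sec:degree} gives
\[
\int_W dd^c\log|q|\wedge(dd^cv)^{k-1}=2\pi\int_{W'}(dd^cv)^{k-1}.
\]
The last integral is $\int_{W'}\bigl(dd^c(v|_{W'})\bigr)^{k-1}$: this is seen by approximating $v$ from above by smooth psh $v_n\searrow v$ on a neighborhood in $\CC^N$, for which $(dd^cv_n)^{k-1}\wedge[W']$ is literally the pushforward of $(dd^c(v_n|_{W'}))^{k-1}$, and then passing to the limit with the Bedford--Taylor convergence theorem recalled at the start of Section~\ref{sec:comparison}. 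By Theorem~\ref{thm:L} applied to $W'$, that integral equals $\deg(W')\,(2\pi)^{k-1}$, and substituting back gives $\deg_{\calL(W)}(q)=\deg(W')/\deg(W)$, as claimed.

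I expect the main obstacle to be the handling of multiplicities and of non-transverse intersections: the clean identity $dd^c\log|q|\wedge[W]=2\pi[W']$ presumes $q$ cuts $W$ in a reduced irreducible hypersurface of $W$, whereas for a given system $p_1,\dots,p_{N-m}$ the intermediate $V_j$ need not be irreducible, equidimensional, or reduced. I would deal with this as in Proposition~\ref{prop:moving}: perturb the defining equations (e.g.\ $p_i\mapsto p_i-t_i$) to a generic configuration in which the whole flag is irreducible, equidimensional, and reduced, establish the formula there, and then let the perturbation tend to $0$, using the stability of $\deg$ and of $\deg_{\calL}$ under such deformations. The remaining verifications --- that $v|_{W'}\in\calL^+(W')$ and that restriction commutes with the Monge--Amp\`ere operator in the relevant sense --- are routine given the convergence theorems already established.
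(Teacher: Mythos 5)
Your proposal is correct and follows essentially the same route as the paper: the key claim $\deg(V_j)=\deg(V_{j+1})\,\deg_{\calL(V_{j+1})}(p_{N-j})$ is exactly the paper's per-step computation via the Lelong--Poincar\'e formula together with the mass formula of Theorem~\ref{thm:L}, followed by the same telescoping product with $\deg(\CC^N)=1$. Your extra attention to the restriction $v|_{W'}\in\calL^+(W')$ and to possible multiplicities (via Proposition~\ref{prop:moving}) only makes explicit points the paper leaves implicit.
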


\begin{proof}
Irreducibility of $V_{m+1},\ldots,V_N$ follows from irreducibility of $V$ by induction: if $V_{j+1}=V_{j+1,1}\cup V_{j+1,2}$, then $V_j=W_1\cup W_2$, where $W_k= \bV(p_{N-j})\cap V_{j+1,k}$, contradicting the inductive hypothesis that $V_j$ is irreducible.  We also have $\dim(V_{j+1})-\dim(V_{j})\in\{0,1\}$; since $\dim(V_m)=m$ and $\dim(V_N)=N$, the difference in dimension must be 1 throughout, showing that $\dim(V_j)=j$.

Fix $j$, and write $V_{j}=V_{j+1}\cap \bV(p_{N-j})$.  Let $v$ be a smooth function in $\calL^+(V_{j+1})$.  Then 
\begin{eqnarray*}
\deg_{\calL(V_{j+1})}(p_{N-j}) &=& \frac{1}{(2\pi)^{j+1}\deg(V_{j+1})}\int_{V_{j+1}} dd^c\log|p_{N-j}|\wedge (dd^c v)^{j} \\
&=& \frac{1}{(2\pi)^{j+1}\deg(V_{j+1})} \int_{V_{j}} 2\pi (dd^cv)^j \\
&=& \frac{\deg(V_j)}{\deg(V_{j+1})}.
\end{eqnarray*}
Hence
$$
\prod_{j=m}^{N} \deg_{\calL(V_{j+1})}(p_{N-j}) \ = \ \prod_{j=m}^N \frac{\deg(V_j)}{\deg(V_{j+1})} \ = \frac{\deg(V)}{\deg(\CC^N)} \ = \ \deg(V). 
$$
\end{proof}


\begin{corollary}[Affine B\'ezout theorem]
Let $p_1,\ldots,p_N\in\CC[z]$ be polynomials, each of degree $\geq 1$.  Suppose $V:=\bV(p_1,\ldots,p_N)$ is finite and $V_1:=\bV(p_2,\ldots,p_{N-j})$ is irreducible. 
  Then the number of points of $V$ is at most 
$$
\prod_{j=1}^N \deg_{\calL(V_{N-j})}(p_{j}).
$$  
\end{corollary}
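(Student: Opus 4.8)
The plan is to reduce to the preceding theorem by peeling off one of the $N$ polynomials. Since $V$ is only assumed finite (hence possibly reducible), the theorem does not apply to $V$ directly; instead I would work with $V_1$, which is irreducible by hypothesis. The first point to establish is that $V_1$ is an affine \emph{curve}: the $N-1$ equations cutting out $V_1$ reduce $\CC^N$ to dimension at least $1$, while if $\dim V_1\geq 2$ then $V_1\cap\bV(p_1)$ would have dimension at least $1$, contradicting the finiteness of $V=V_1\cap\bV(p_1)$; hence $\dim V_1=1$. In particular $p_1$ does not vanish identically on $V_1$ (otherwise $V=V_1$ would be infinite), so $\deg_{V_1}(p_1)\geq 1$ and $\log|p_1|\in(\deg_{V_1}p_1)\,\calL(V_1)$ by Proposition~\ref{prop:L}. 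We may assume $V\neq\emptyset$, as otherwise the bound is trivial.

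Next I would apply the preceding theorem to $V_1$ — an irreducible $1$-dimensional affine variety in $\CC^N$ cut out by the $N-1$ polynomials $p_2,\dots,p_N$ — to obtain
\[
\deg(V_1)=\prod_{j=2}^{N}\deg_{\calL(V_j)}(p_j),
\]
with the chain labelled so that passing from $V_j$ down to $V_{j-1}$ adjoins the polynomial $p_j$; the irreducibility of the intermediate $V_j$ that is used here is part of the content of that theorem.

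It remains to handle the final intersection $V=V_1\cap\bV(p_1)$. On the curve $V_1$ the Lelong--Poincar\'e formula gives $dd^c\log|p_1|=2\pi\sum_{q}m_q\,[q]$, where $q$ runs over the zeros of $p_1$ on $V_1$ — that is, over the points of $V$ — and $m_q\geq 1$ is the order of vanishing (computed on the normalization at a singular point of $V_1$). Integrating over $V_1$ and unwinding the definition of the Lelong degree, while using that the total mass of $dd^c\log|p_1|$ on $V_1$ is finite and carried entirely by the finite zeros (this is the point of Section~\ref{sec:degree}: $\log|p_1|$ has at most logarithmic growth on $V_1$, so no mass escapes to infinity), I obtain
\[
\deg(V_1)\,\deg_{\calL(V_1)}(p_1)=\frac{1}{2\pi}\int_{V_1}dd^c\log|p_1|=\sum_{q}m_q\ \geq\ \#V .
\]
Multiplying this into the product above gives $\#V\leq\deg_{\calL(V_1)}(p_1)\prod_{j=2}^{N}\deg_{\calL(V_j)}(p_j)$, which is the asserted bound (up to relabeling the chain).

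The step I expect to be the main obstacle is this last one: one must verify carefully that the positive measure $dd^c\log|p_1|$ on the noncompact, possibly singular curve $V_1$ has finite total mass equal to $2\pi$ times the number of finite zeros of $p_1$ counted with multiplicity — equivalently, that Lelong--Poincar\'e accounts for \emph{all} of the mass, with none lost at infinity — which is precisely where the finiteness results of Section~\ref{sec:degree} enter, and that singular points of $V_1$ are dealt with by passing to the normalization. The inequality $\sum_q m_q\geq\#V$, coming simply from $m_q\geq 1$, is what degrades the equality of the preceding theorem to "at most"; the dimension bookkeeping at the start is routine.
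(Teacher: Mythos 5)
Your argument is correct, and it takes a genuinely different route from the paper's. The paper never works on the curve $V_1$ directly: it forms the moving variety $W=\bV(p_1-t,p_2,\ldots,p_N)\subset\CC^{N+1}$, applies the preceding theorem to $W$ (asserted to be an irreducible curve for generic $t$), converts the Lelong degrees along the chain of $W$ into degrees on the fixed varieties via Proposition \ref{prop:moving} (the distinguished factor being $\deg_{\calL(\CC^{N+1})}(p_1-t)=\deg(p_1)$), and finally bounds $\#V$ by $\deg(W)$ by counting the $\deg(W)$ points in a generic fiber over $t$ through a perturbed Noether projection and letting $t\to 0$ by continuity. You instead apply the theorem once to $V_1$ itself (your dimension count $\dim V_1=1$ is exactly the needed justification, and the irreducibility hypothesis enters where stated) and absorb $p_1$ by Lelong--Poincar\'e on the curve: since $m=1$ the integral defining $\deg_{\calL(V_1)}(p_1)$ is literally the total mass of $[V_1]\wedge dd^c\log|p_1|$, and this measure is supported on the finite set $V$, so the worry you flag about mass escaping to infinity is vacuous --- the mass is $2\pi\sum_q m_q$ with each $m_q\geq 1$ (normalizing at singular points of $V_1$, as you indicate), giving $\deg(V_1)\,\deg_{\calL(V_1)}(p_1)=\sum_q m_q\geq\#V$. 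What each approach buys: yours is shorter, needs no auxiliary variable, no genericity claim for the irreducibility of $W$, and no limiting continuity argument; it makes the source of the inequality transparent (multiplicities $\geq 1$), in effect delivering the multiplicity refinement of the paper's closing remark for free; and the factor attached to $p_1$ is $\deg_{\calL(V_1)}(p_1)$, which fits the natural reading of the statement (whose indexing is garbled) and can be smaller than the factor $\deg(p_1)$ produced by the paper's chain, which is ordered the other way. The paper's deformation argument, in exchange, exhibits the bound as the degree of an honest branched cover over the $t$-line, which is what underlies its interpretation of multiplicity as coalescing fiber points and reuses the machinery of Proposition \ref{prop:moving}.
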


\begin{proof}
Let $W:=\bV(p_1(z)-t,p_2(z),\ldots,p_N(z))\subset\CC^{N+1}$.  Then $W$ is an irreducible curve for generic values of $t$.   By the previous result, 
$$
\deg(W) = \prod_{j=1}^N \deg_{\calL(W_{j+1})}(p_{N+1-j}) = \deg(p_1)\prod_{j=1}^{N-1} \deg_{\calL(V_{j})}(p_{N-j}),
$$
where we use $\deg_{\calL(W_{N+1})}(p_1)=\deg(p_1)$ (since $W_{N+1}=\CC^{N+1}$ by our convention), and $\deg_{\calL(W_{j+1})}(p_{N-j}) =\deg_{\calL(V_j)}(p_{N-j})$ by  Proposition \ref{prop:moving}.

For $\epsilon>0$, let $L_{\epsilon}:\CC^{N+1}\to\CC^{N+1}$ be a linear map close to the identity ($\|L_{\epsilon}- I\|\to 0$ as $\epsilon\to 0$), such that $(t_{\epsilon},z_{\epsilon})=L_{\epsilon}(t,z)$  is a Noether presentation for $W$.  For any value of $t_{\epsilon}$ away from the branch locus of the projection $\pi$ to the first coordinate,  we have local inverses  $\zeta_1,\ldots,\zeta_D$ of $\pi$,  where $D=\deg(W)$.  Pick one of these (say $\zeta_1$) and define $\varphi_{\epsilon}:= \pi\circ L_{\epsilon}^{-1}\circ\zeta_1$.  Then $\varphi_{\epsilon}$ is holomorphic and locally invertible for $\epsilon$ sufficiently small (since $\varphi_{\epsilon}'\to  1$ locally uniformly as $\epsilon\to 0$).  

For each $j=1,\ldots,D$, the map $L_{\epsilon}^{-1}\circ\zeta_j\circ \varphi_{\epsilon}^{-1}$ gives a local inverse for $\pi$ in a neighborhood of $t$ in the original coordinates, $(t,x)\stackrel{\pi}{\mapsto} t$.  For fixed $x$ these local inverses give $D$ points $(t,x)$, for generic values of $t$, i.e., $\deg(W)$ points of $\bV(p_1-t,p_2,\ldots,p_N)$.  Letting $t\to 0$, a continuity argument gives $\deg(W)$ as an upper bound for the number of points in $\bV(p_1,\ldots,p_N)$.
\end{proof}

\begin{remark}\rm
\begin{enumerate}
\item To remove the condition that $V_1$ is irreducible, we can extend the definition of Lelong degree to unions of affine varieties of the same dimension.  We then treat each component in the above proof separately, and sum over all components at the end. 
\item We can introduce the notion of multiplicity of a point to get a formula with equality.  The point $b$ is of multiplicity $m$ if there is $\epsilon_0>0$ such that for every $\epsilon\in(0,\epsilon_0)$ there exists $\delta>0$ such that for a generic choice of $t=(t_1,\ldots,t_N)\in\CC^N$ with $\|t\|<\delta$, the set $$\bV(p_1-t_1,\ldots,p_N-t_N)\cap\{|z|<\epsilon\}$$ consists of exactly $m$ points.  In the above proof, $b$ is of multiplicity $m$ if there are $m$ distinct points $(t,b_{1,t}),\ldots,(t,b_{m,t})$ of $W$ that coalesce into $(0,b)$ as $t\to 0$ along a generic path.
\end{enumerate}
 \end{remark}

\begin{example}\rm
We illustrate with a simple example in $\CC^2$: compute the number of points of $V=\bV(p_1,p_2)$, where
\begin{equation}\label{eqn:ex55}
p_1(x,y):=y^4+xy^3+xy^2+x^2y+x^2+1,\quad p_2(x,y):=y-1.
\end{equation}

\begin{enumerate}
\item {\bf B\'ezout's theorem in $\PP^2$}.  Homogenizing in the variable $t$ gives
\begin{equation}\label{eqn:ex55a}
y^4+xy^3+xy^2t + x^2yt+ x^2t^2 + t^4=0, \quad  y-t=0.
\end{equation}
The intersection of a degree $4$ curve with a degree $1$ curve is $4\cdot 1=4$ points (counting multiplicity), by B\'ezout's theorem.  To get the affine points, we must discard  points at infinity: putting $t=0$ in (\ref{eqn:ex55a}) gives the equations $y^4+xy^3=0$ and $y=0$,  yielding the point $[t:x:y]=[0:1:0]$.   This is a point of multiplicity 2 as can be seen as follows.  First,   dehomogenize in the variable $x$ to get local coordinates at infinity: setting $x=1$ in (\ref{eqn:ex55a}) gives
$$y^4+y^3+y^2t+yt+t^2+t^4=0, \quad y=t.$$
Substituting the second equation into the first yields $$0=2t^2+2t^3+2t^2=2t^2(t^2+t+1),$$ so $t=0$ is a root of multiplicity $2$.  

Discarding the double point at infinity leaves 2 affine points of $V$.

\item {\bf Affine B\'ezout theorem with $V_2=\bV(p_2)$}.  We have $\deg(p_2)=1$.  Next, $(x,y)$ is a Noether presentation of $V_2$; plugging $y-1=0$ into $p_1$ gives $2x^2+2x+2$ as its normal form, so $\deg_{\calL(V_2)}(p_1)=2$ by Proposition \ref{prop:45}.  We obtain $1\cdot 2 = 2$ points.

\item {\bf Affine B\'ezout theorem with $V_2=\bV(p_1)$.}  We have $\deg(p_1)=4$.  Now $\deg_{\calL(V_2)}(p_2)=\deg_{\calL(V_2)}(y-1)$.  The change of coordinates $(x,y)\mapsto (x,y+1)$ takes $y-1$ to $y$ and takes $P$ to a polynomial whose highest exponent in $x$ alone is still $x^2$, so applying (\ref{eqn:lelongdegree}) in these new coordinates gives the same result. Thus $\deg_{\calL(V_2}(y-1) = \tfrac{1}{2}$, and we obtain $4\cdot\tfrac{1}{2} = 2$ points, as before.
\end{enumerate}

\end{example}

\bibliographystyle{plainnat}
\bibliography{myreferences}

\end{document}